\newtheorem{theorem}{Theorem}[section]
\newtheorem{lemma}[theorem]{Lemma}
\newtheorem{prop}[theorem]{Proposition}
\newtheorem{cor}[theorem]{Corollary}
\newtheorem{definition}[theorem]{Definition}
\newtheorem{example}[theorem]{Example}
\newtheorem{remark}[theorem]{Remark}
\tikzstyle arrowstyle=[scale=1]
\tikzstyle directed=[postaction={decorate,decoration={markings,mark=at position .65 with {\arrow[arrowstyle]{stealth}}}}]
\tikzstyle reverse directed=[postaction={decorate,decoration={markings,mark=at position .65 with {\arrowreversed[arrowstyle]{stealth};}}}]
\def\ee{{\mathcal E}}
\def\rr{{\mathscr R}}
\def\aa{{\mathbb A}}
\def\I{{\mathbb I}}
\def\A{{\mathcal A}}
\def\P{{\mathcal P}}
\def\ff{{\mathcal F}}
\def\V{{\mathbb V}}
\def\C{{\mathcal C}}
\def\N{{\mathbb N}}
\def\Q{{\mathbb Q}}
\def\I{{\mathbb I}}
\def\sat{\hbox{\rm{sat}}}
\def\trdeg{\hbox{\rm{tr.deg}}}
\def\deg{\hbox{\rm{deg}}}
\def\Res{\hbox{\rm{Res}}}
\def\det{\hbox{\rm{det}}}
\def\ord{\hbox{\rm{ord}}}
\def\max{\hbox{\rm{max}}}
\def\H{\hbox{\rm{H}}}
\def\prem{\hbox{\rm{prem}}}
\def\S{\hbox{\rm{S}}}
\def\I{\hbox{\rm{I}}}
\def\gcld{\hbox{\rm gcld}}
\def\gcrd{\hbox{\rm gcrd}}
\begin{document}
 \title{Unirational Differential  Curves and Differential Rational Parametrizations}
\author{Lei Fu$^{\text{\,a, b}}$  and Wei Li$^{\text{\,a, b}}$ \smallskip\\  
$^{\text{a }}$KLMM,
   Academy of Mathematics and Systems Science,\\ 
       Chinese Academy of Sciences, Beijing 100190, China;\\ 
    $^{\text{b }}$University 
of Chinese Academy of Sciences, Beijing 100049, China\\
{fulei@amss.ac.cn, liwei@mmrc.iss.ac.cn} 
%\tnotetext[mytitlenote]{Partially supported by NSFC Grants (11971029, 11688101, 11671014).} 
 }

\begin{abstract}
In this paper, we study unirational differential curves and the corresponding differential rational parametrizations. 
We first investigate  basic properties of proper differential rational parametrizations for unirational differential curves. 
Then we show that the implicitization problem of proper linear differential rational parametric equations can be solved by means of differential resultants. 
{Furthermore,  for linear differential curves, we give an algorithm to determine whether an implicitly given linear differential curve is unirational and,  in the affirmative case, to compute a proper differential rational parametrization for the differential curve.}
\end{abstract}
\begin{keyword}
Unirational differential curve \sep Proper differential rational parametrization    \sep Differential resultants  \sep Implicitization 
\end{keyword}

 \maketitle
 
 \section{Introduction} \label{sec-intro} 

The study of unirational varieties and the corresponding rational parametrizations is a basic topic in computational algebraic geometry.  
The central problem in this field is to determine whether an algebraic variety is rationally parametrizable, and,  for unirational varieties, to give efficient algorithms to transform between the  implicit representations and  parametric representations. 
These problems have been fully explored for algebraic curves by Sendra, Winkler and P\'erez-D\'iaz in their classic book \cite{Winkler}  
using symbolic computation methods.  
They are also well-understood for algebraic surfaces \cite{Hartshorne, Sch98a}.

The differential implicitization and rational parametrization problems for differential algebraic varieties have similar importance, 
and thus the study of these problems is an active research field in differential algebraic geometry. 
The differential implicitization of differential rational parametric equations was first studied via the differential characteristic set method by Gao \cite{Gao2003}. In the special case when given linear differential polynomial parametric equations, it was treated via linear complete differential resultants \cite{Rueda, Rueda2011}.

However, as far as we know, there are still no general results on the differential parametrization problem, 
that is, the problem of deciding whether an implicitly given differential algebraic variety is differentially unirational, and of finding a differential rational parametrization in the affirmative case. 
The work of Feng and Gao, on finding rational general solutions for a univariate algebraic ODE $f(y)=0$, is the first step in solving the rational parametrization problem for differential varieties \cite{FengGao}. 
Their work  gives necessary and sufficient conditions for an ODE to have a rational general solution, and a polynomial algorithm to compute the rational general solution of a first order autonomous ODE if it exists.
Then the subsequent work by Winkler and his coauthors extended the method to study  rational general solutions for non-autonomous parametrizable first-order  ODEs \cite{Winkler2010, Winkler2011}, higher order ODEs \cite{Winkler2013} and even partial differential equations \cite{Winkler2018}.
While these are important contributions on the parametrization of zero-dimensional differential varieties in the one-dimensional space $\mathbb A^1$ \cite{Winkler2019},
 it seems that the rational parametrization problem for differential varieties of positive differential dimension has not been studied.

In this paper, we study unirational ordinary differential curves and the corresponding differential rational parametrizations. A  (plane) irreducible differential curve $\C$  is a one-dimensional irreducible differential variety in $\mathbb A^2$. The differential characteristic set method guarantees the unique existence of an irreducible differential polynomial $A(x,y)\in\ff\{x,y\}$ such that $\C$ is the general component of $A(x,y)=0$; 
thus this differential curve is often represented by $(\C, A)$. If $(\C, A)$ has a generic point of the form $\mathcal P(u)\in\ff\langle u\rangle^2$ with $u$ a differential parameter, it is called a unirational differential curve and $\mathcal P(u)$ is called a differential rational parametrization of $\C$.  $\mathcal P(u)$ is called proper if it defines a differential birational map between  $\mathbb A^1$ and $\C$. The differential L\"{u}roth theorem guarantees that  unirational differential curves always have proper differential rational parametrizations \cite{Gao2003}.

For unirational differential curves,  we first explore basic properties of proper differential rational parametrizations. 
In particular, Theorem \ref{th-order} gives the order property of properness and 
Theorem \ref{th-mobius} shows that proper parametrizations are unique up to M\"obius transformations.  
These results extend similar properties of proper parametrizations of algebraic curves to their differential counterparts.
For proper linear differential rational parametrizations, we give further properties and show differential resultants can be used to compute the corresponding implicit equations of proper linear differential rational parametric equations. 
This could be considered a generalization of Rueda-Sendra's work on implicitization of linear differential polynomial parametric equations via linear complete differential resultants \cite{Rueda}.

Concerning the rational parametrizability problem, it is well known that an algebraic curve is unirational if and only if its genus is equal to 0  \cite[Theorem 4.63]{Winkler}, so the determination of unirationality of algebraic curves can be reduced to the computation of the genus of the curve. 
Compared with the algebraic case, the rational parametrizability problem of differential curves is much more complicated to deal with. More precisely, the problem can be stated as follows: given an irreducible differential polynomial $A(x,y)$, decide whether the differential curve $(\C,A)$ is unirational, 
and if it is unirational, give efficient algorithms to compute a parametrization. 
In this paper, {we will start from the simplest nontrivial case by considering linear differential curves. 
And for  linear differential curves, we give an algorithm to determine whether the implicitly given differential curve is unirational,
 and, in the affirmative case, to  compute a proper linear differential polynomial parametrization for the unirational linear differential curve.}

This paper is organized as follows. In Section 2, we introduce some basic notions and notation in differential algebra. 
In Section 3,  we explore the basic properties of proper differential rational parametrizations for unirational {differential} curves.
 In Section 4,  further properties of proper linear differential rational parametrizations are given, and in particular,  
 the corresponding implicit equations can be computed via the method of differential resultants. 
 In Section 5, {we  deal with the problem of algorithmically deciding whether an implicitly given linear differential curve is unirational and computing a proper rational parametrization in the affirmative case.
In Section 6, we propose several problems for further study.}

\section{Basic notions and notation}
In the following, we will introduce the basic notions and notation to be used in this paper. 
For more details about differential algebra, please refer to \cite{Ritt},\cite{Kolchin1973},\cite{Wsit}.

Let $\ff$ be an ordinary differential field of characteristic 0 with derivation $\delta$. For example, $\ff=\Q(t)$ with $\delta=\frac{\mathrm d}{\mathrm dt}$.
 An element $c \in \ff$ such that $\delta(c)=0$ is called a constant of $\ff$. 
 The set of all constants of $\ff$ constitutes a differential subfield of $\ff$, called the field of constants of $\ff$ and denoted by $C_\ff$. 
% If $C_\ff=\ff$, $\ff$ is called a constant differential field.
 For an element $a$ in $\ff$, we use $a', a'', a^{(k)} $  to indicate the derivatives $\delta(a),  \delta^2(a), \delta^k(a)\,(k\geq3)$.

 Let $\mathcal G$ be a differential extension field of $\ff$.
 A subset $\Sigma\subset \mathcal G$ is said to be {\it  differentially dependent over $\ff$} if the set
$\Theta(\Sigma):=\{ \delta^{k}(a): a\in\Sigma, k\in\mathbb N\}$ is algebraically dependent over $\ff$.
Otherwise, $\Sigma$ is said to be a family of {\it  differential indeterminates over $\ff$}.
If $\Sigma=\{\alpha\}$, $\alpha$ is called {\it differentially algebraic} or {\it differentially transcendental} over $\ff$ respectively. 
Given $S\subset \mathcal G$, we denote  respectively by $\ff\{S\}=\ff[\Theta(S)]$ and $\ff\langle S\rangle=\ff\big(\Theta(S)\big)$ the smallest differential subring and differential subfield of $\mathcal G$ containing $\ff$ and $S$.

  Let $\ee$  be a fixed universal differential extension field of $\ff$ \cite[p. 134]{Kolchin1973}.
Let  $x,y, y_1,\ldots,y_n$ be a set of differential indeterminates over $\ee$.
Consider the differential polynomial ring $\ff\{y_1,\ldots,y_n\}=\ff[y_j^{(k)}:j=1,\ldots,n; k\in \N]$.
If $n=2$, we usually use the notation $\ff\{x,y\}$ instead.
A differential ideal in $\ff\{y_1,\ldots,y_n\}$ is an ordinary algebraic ideal  closed under $\delta$. 
A prime differential ideal is a differential ideal which is prime as an ordinary ideal. 
{For $\Sigma\subset\ff\{y_1,\ldots,y_n\}$, the differential ideal in $\ff\{y_1,\ldots,y_n\}$ generated by  $\Sigma$ is denoted by $[\Sigma]$.}
Let $f\in\ff\{y_1,\ldots,y_n\}$. For each $y_j$, the order of $f$ w.r.t. $y_j$  is defined to be the largest number $k$ such that $y_j^{(k)}$ appears effectively in $f$,  denoted by $\ord_{y_j}f$,  and in case $y_j$ and its derivatives don't appear in $f$,  we set $\ord_{y_j}f=-\infty$. 
{The \textit{order} of $f$ is defined to be $\max_{j=1}^n\{\ord_{y_j}f\}$, denoted by $\ord(f)$.  
\vskip2pt

Let $\mathbb A^n(\ee)$ denote the $n$-dimensional \textit{differential affine space} over $\mathcal E$. 
Let $\Sigma$ be a subset of differential polynomials in $\ff\{y_1,\ldots,y_n\}$. 
A point $\eta=(\eta_1,\ldots,\eta_n) \in \aa^{n}(\ee)$ is called a differential zero of $\Sigma$ if $f(\eta)=0$ for any $f \in \Sigma$. The set of differential zeros of $\Sigma$ is denoted by $\V(\Sigma)$, which is called a \textit{differential variety} defined over $\ff$. For a differential variety $V$, we denote $\mathbb I(V)$ to be the set of all differential polynomials in $\ff\{y_1,\ldots,y_n\}$ that vanish at every point of $V$. Clearly, $\mathbb I(V)$ is a radical differential ideal in $\ff\{y_1,\ldots,y_n\}$. A differential variety is said to be irreducible if it's not the union of two proper differential subvarieties. A point $\eta$ $\in \aa^{n}(\ee)$ is called a \textit{generic point} of a prime differential ideal $P$\,(or the corresponding variety $\V(P)$) if $\mathbb I(\eta)=P$. The \textit{differential dimension} of $P$ or $\V(P)$ is defined to be the differential transcendence degree of $\ff\langle \eta\rangle$ over $\ff$\cite[pp.105-109]{Kolchin1973}.

A ranking $\rr$ is a total ordering $<$ on  $\Theta(y):=\{y_j^{(k)}:j=1,\ldots,n; k\in \N\}$ such that for all $u,v \in \Theta(y)$, we have $u <\delta u$ and  $u < v \Rightarrow \delta u < \delta v$. 
By convention, $1<u$ for all $u \in \Theta(y)$. 
Let g be a differential polynomial in $\ff\{y_1,\ldots,y_n\}$ and $\rr$ a ranking endowed on it. The highest ranked derivative w.r.t. $\rr$ which appears in $g$ is called the \textit{leader} of $g$ and is denoted by $u_g$. Let $d$ be the degree of $g$ in $u_g$. Rewrite $g$ as a univariate polynomial in $u_g$, then, \begin{equation}g=I_du_g^{d}+I_{d-1}u_g^{d-1}+\cdots+I_0.\end{equation}
The leading coefficient $I_d$ is called the \textit{initial} of $g$ and denote it by $\I_g$. The partial derivative $\frac{\partial g}{\partial u_g}$ is called the \textit{separant} of $g$ and denoted by $\S_g$. The \textit{rank} of $g$ is defined to be $(u_g,d)$. Let $f$ and $g$ be differential polynomials and $(u_g,d)$ the rank of $g$. We say $f$ is \textit{partially reduced} w.r.t. $g$ if no proper derivative of $u_g$ appears in $f$, and $f$ is \textit{reduced} w.r.t. $g$ if $f$ is partially reduced w.r.t. $g$ and $\deg_{u_g}f <d$. Let $\A$ be a set of differential polynomials. $\A$ is said to be an \textit{autoreduced set} if each element of $\A$ is reduced w.r.t. every other one. Every autoreduced set is finite.

Let $\A=A_1,A_2,\cdots,A_t$ be an autoreduced set, and $f$ any differential polynomial.
 There exists a reduction algorithm, called Ritt-Kolchin's Remainder Algorithm, which reduces $f$ to a differential polynomial $r$ such that $r$ is reduced w.r.t. $\A$. More precisely, there exist $d_i,e_i \in \N$ such that 
 {\begin{equation}\prod_{i=1}^{t}\S_{A_i}^{d_i}\I_{A_i}^{e_i}\cdot f \equiv r, \text{mod } [\A].
 \end{equation}}
\noindent This $r$ is called the \textit{Ritt-Kolchin remainder} of $f$ w.r.t. $\A$.
Denote {$\H_\A=\prod_{i=1}^{t}\S_{A_i} \I_{A_i}$}. The \textit{saturation ideal} of $\A$ is defined as \begin{equation}\sat(\A)=[\A]:\H_\A^{\infty}=\{f\in \ff\{x,y\}\, \mid\, \exists\, m \in \N, \text{such that } \H_\A^{m}f \in [\A]\} \end{equation}
{Let $S\subset\ff\{y_1,\ldots,y_n\}$ be a differential polynomial set.}
An autoreduced set $\A$ contained in $S$ is said to be a {\it characteristic set } of $S$ if $S$ does not contain any nonzero element reduced w.r.t. $\A$. 
% All the characteristic sets of S have the same and minimal rank among all auto-reduced sets contained in S.   
A characteristic set $\A$ of a differential ideal $\mathcal I$ reduces to zero all elements of $\mathcal I$.
 If additionally $\mathcal I$ is prime, $\A$ reduces to zero only the elements of $\mathcal I$ and  $ \mathcal I=\sat(\A)$ (\cite[Lemma 2, p.167]{Kolchin1973}). The following result on Ritt's general component theorem will  often be used.
 \begin{lemma} \label{lm-satA} \cite[Theorem 11.2]{Wsit}
 Let $A\in\ff\{y_1,\ldots,y_n\}\backslash\ff$ be an irreducible differential polynomial and $S_A$ be the separant of $A$ under some ranking.
 Then $\sat(A)=[A]:S_A^\infty$ is a prime differential ideal, called the general component of $A$
 and $A$ is a  characteristic set of $\sat(A)$ under any ranking.
 In particular, if $B\in\sat(A)$ and $\ord(B)\leq\ord(A)$, then $B$ is divisible by $A$.
 \end{lemma}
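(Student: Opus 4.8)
The plan is to exhibit a generic zero $\eta$ of $A$ explicitly and to prove that $\sat(A)=[A]:\S_A^{\infty}$ coincides with the prime ideal $\mathbb{I}(\eta)$; all remaining assertions then follow from the structure of $\eta$. I would fix a ranking $\rr$ with leader $u_A=y_j^{(p)}$. Since $A$ is irreducible and $\mathrm{char}\,\ff=0$, when $A$ is viewed as a polynomial in $u_A$ over the field generated by the derivatives of rank below $u_A$ together with all derivatives of the remaining indeterminates, it is separable, so $\S_A$ cannot vanish at a root of $A$. I would then build $\eta$ by letting every derivative that is not a proper derivative of $u_A$ specialize so that the derivatives of $y_i$ $(i\ne j)$ and $y_j,\ldots,y_j^{(p-1)}$ become algebraically independent over $\ff$, by sending $u_A$ to a root of $A$ over this purely transcendental extension, and by forcing the higher derivatives through the identities $A^{(k)}=\S_A\,u_A^{(k)}+R_k=0$, which are solvable for $u_A^{(k)}$ because $\S_A(\eta)\ne 0$. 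By construction $A^{(k)}(\eta)=0$ for all $k$, hence $[A]\subseteq\mathbb{I}(\eta)$; as $\mathbb{I}(\eta)$ is prime and $\S_A\notin\mathbb{I}(\eta)$, this yields the easy inclusion $[A]:\S_A^{\infty}\subseteq\mathbb{I}(\eta)$.

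The heart of the proof is the reverse inclusion, which I would establish by localizing at $\S_A$. Writing $\mathcal R=\ff\{y_1,\ldots,y_n\}$ and $\mathcal R_0=\ff[\Theta(y)\setminus\{u_A',u_A'',\dots\}]$, the identities $A^{(k)}=\S_A u_A^{(k)}+R_k$ allow one to rewrite, modulo $[A]$ and after inverting $\S_A$, every proper derivative of $u_A$ as a rational expression in lower-rank derivatives; consequently the natural map $\big(\mathcal R_0/(A)\big)[\S_A^{-1}]\to\mathcal R[\S_A^{-1}]/[A]\mathcal R[\S_A^{-1}]$ is surjective. Composing with evaluation at $\eta$ shows it is also injective, because the only algebraic relation satisfied by the image of $u_A$ over the algebraically independent images of the lower derivatives is $A$ itself. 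Hence $[A]\mathcal R[\S_A^{-1}]=\mathbb{I}(\eta)\mathcal R[\S_A^{-1}]$, and contracting back to $\mathcal R$ gives $[A]:\S_A^{\infty}=\mathbb{I}(\eta)$, which is therefore prime and of differential dimension $n-1$. I expect this collapse of the differential ideal to the algebraic ideal $(A)$ after inverting $\S_A$—verifying precisely that no relation beyond $A$ survives the localization—to be the main obstacle and the step demanding the most care.

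It remains to check that $A$ is a characteristic set and to handle the ranking-independence. Under an arbitrary ranking, if $r\in\sat(A)$ is reduced with respect to $A$, then $r$ is partially reduced and its degree in the leader is below that of $A$; since $r(\eta)=0$, while the lower derivatives specialize to algebraically independent elements and the leader is a root of the degree-$d$ irreducible $A$, each coefficient of $r$ in the leader must vanish, forcing $r=0$. Thus $\{A\}$ is a characteristic set. For the independence of $\sat(A)$ from the chosen ranking I would invoke the intrinsic characterization that it is the unique minimal prime over $[A]$ of differential dimension $n-1$, the remaining (singular) components all lying in $\V(\S_A)$ and having strictly smaller dimension; this makes the degree argument above valid for every ranking.

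Finally, for the ``in particular'' clause I would select a ranking for which the leader $u_A$ has order equal to $\ord(A)$, so that any $B$ with $\ord(B)\le\ord(A)$ contains no proper derivative of $u_A$ and is therefore partially reduced with respect to $A$. Pseudo-dividing in the polynomial ring $\mathcal R_0$ gives $\I_A^{s}B=QA+R$ with $R$ reduced with respect to $A$; then $R=\I_A^{s}B-QA\in\sat(A)$ is reduced, so $R=0$ by the characteristic-set property. Since $A$ is prime in the UFD $\mathcal R_0$ and does not divide $\I_A$, the relation $\I_A^{s}B=QA$ forces $A\mid B$, which completes the plan.
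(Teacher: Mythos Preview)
The paper does not prove this lemma at all; it is quoted verbatim from \cite[Theorem 11.2]{Wsit} and used as a black box. So there is no ``paper's own proof'' to compare against, and your outline is essentially the classical Ritt--Kolchin argument for the general component, which is the right thing to reproduce.

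Your construction of the generic zero $\eta$, the two inclusions establishing $\sat(A)=\mathbb I(\eta)$ via localization at $\S_A$, the verification that $\{A\}$ is a characteristic set under the fixed ranking $\rr$, and the final divisibility argument for the ``in particular'' clause are all sound.

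There is, however, a genuine gap in your treatment of ranking--independence. You claim that $\sat(A)$ is ``the unique minimal prime over $[A]$ of differential dimension $n-1$, the remaining (singular) components \dots\ having strictly smaller dimension.'' This is false. Take $A=(y')^{2}-4y\in\ff\{x,y\}$ under the elimination ranking $x<y$: the general component has generic point $(\xi,\eta)$ with $\xi$ differentially transcendental and $\eta$ a generic solution of $(y')^{2}=4y$, so its differential dimension is $1$; the singular component is $[y]\subset\ff\{x,y\}$, whose generic point $(\xi,0)$ also has differential dimension $1$. Dimension does not separate the general component from the singular ones, so your intrinsic characterization fails and the subsequent sentence (``this makes the degree argument above valid for every ranking'') is unjustified as written.

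The standard repair is short and uses what you have already proved. For a second ranking $\rr'$ with leader $u'_A$ and separant $\S'_A$, note that $\S'_A$ involves only derivatives that occur in $A$, hence is partially reduced with respect to $A$ under $\rr$; by your divisibility lemma, if $\S'_A\in\sat(A)$ then $A\mid \S'_A$, impossible since $\deg_{u'_A}\S'_A<\deg_{u'_A}A$. Thus $\S'_A\notin\sat(A)$. Symmetrically $\S_A\notin[A]:(\S'_A)^{\infty}$, and since both saturations are prime ideals containing $[A]$, each is contained in the other, so they coincide. Now rerun your characteristic--set argument with $\rr'$ in place of $\rr$ (using the generic zero built from $\rr'$), and the proof is complete.
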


We end this section by proving some technical results for later use.
Let $u\in\ee$ be differentially transcendental over  {$\ff$}.
Let $P, Q \in \ff\{u\}$ be  nonzero. 
The fraction $P/Q$ is called in reduced form if $\text{gcd}(P(u),Q(u))=1$.
Given  $R(u)\in\ff\langle u\rangle$ with $R(u)=\frac{P(u)}{Q(u)}$ in reduced form, 
the \textit{order} of $R(u)$ is defined to be $\max\{\ord_uP,\ord_uQ\}$, 
denoted by $\ord_u(R(u))$  {or simply by $\ord(R(u))$}.
Clearly, it is well-defined.
{
\begin{lemma}\label{lm-ord}
Let $P(u),Q(u) \in \ff\{u\}$ with $\gcd(P,Q)=1$ and $m=\ord(\frac{P(u)}{Q(u)})\geq0$. Then we have
\begin{enumerate}
\item[(1)]  
{For each $s\in\mathbb N_{>0}$}, $(P/Q)^{(s)}=\frac{P_s}{Q^{s+1}}$, where $P_s$ is a differential polynomial of order $m+s$ and linear in $u^{(m+s)}$; 
\item[(2)] $\trdeg_{\ff\langle \frac{P(u)}{Q(u)}\rangle}\ff\langle u \rangle=\ord(\frac{P(u)}{Q(u)})$.\end{enumerate}
\end{lemma}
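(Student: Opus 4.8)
For part~(1) my plan is to induct on $s$ using the quotient rule, tracking both the order and the degree in the leading derivative. In the base case $s=1$ we have $R'=(P'Q-PQ')/Q^{2}$ with $R=P/Q$, so $P_1=P'Q-PQ'$; since $P'$ and $Q'$ are linear in the derivative one order above $\ord_u P$ and $\ord_u Q$ respectively, while $P,Q$ themselves have order $\le m$, a direct inspection shows $P_1$ is at most linear in $u^{(m+1)}$ with coefficient $\frac{\partial P}{\partial u^{(m)}}Q-P\frac{\partial Q}{\partial u^{(m)}}$ (the appropriate terms dropping out when the relevant order is $<m$). For the inductive step, differentiating $R^{(s)}=P_s/Q^{s+1}$ gives $P_{s+1}=P_s'Q-(s+1)P_sQ'$; writing $P_s=\alpha\,u^{(m+s)}+\beta$ with $\alpha=\frac{\partial P_s}{\partial u^{(m+s)}}\neq0$ of order $<m+s$, and noting that $Q,Q'$ have order $\le m+1<m+s+1$ once $s\ge1$, the derivative $u^{(m+s+1)}$ can only enter $P_{s+1}$ through $P_s'$, with coefficient $\alpha Q\neq0$. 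This yields order exactly $m+s+1$ and linearity in the top derivative.

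The main obstacle is showing that the base-case coefficient $\frac{\partial P}{\partial u^{(m)}}Q-P\frac{\partial Q}{\partial u^{(m)}}$ is nonzero precisely in the case $\ord_u P=\ord_u Q=m$ (the cases where only one of $P,Q$ attains order $m$ being immediate). I would argue by contradiction: if it vanished, then $P\mid \frac{\partial P}{\partial u^{(m)}}Q$, and since $\gcd(P,Q)=1$ in the UFD $\ff\{u\}=\ff[u,u',u'',\dots]$, Euclid's lemma forces $P\mid\frac{\partial P}{\partial u^{(m)}}$. But $\deg_{u^{(m)}}\frac{\partial P}{\partial u^{(m)}}=\deg_{u^{(m)}}P-1$, so the divisibility can only hold if $\frac{\partial P}{\partial u^{(m)}}=0$, contradicting $\deg_{u^{(m)}}P\ge1$ in characteristic $0$. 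This coprimality-plus-characteristic-zero input is exactly what guarantees the leading coefficient survives; without it the numerator could collapse in order.

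For part~(2) my plan is to reduce the infinite transcendence-degree computation to finite levels and invoke the tower law. Put $K_s=\ff(R,R',\dots,R^{(s)})$ and $L_t=\ff(u,u',\dots,u^{(t)})$, so that $\ff\langle R\rangle=\bigcup_s K_s$ and $\ff\langle u\rangle=\bigcup_t L_t$. Since $u$ is differentially transcendental, the derivatives of $u$ are algebraically independent, giving $\trdeg_\ff L_t=t+1$. By part~(1) each $R^{(j)}$ has order exactly $m+j$ and is a nonconstant rational function of $u^{(m+j)}$ over $L_{m+j-1}$; an easy induction then shows $R,\dots,R^{(s)}$ are algebraically independent over $\ff$, i.e. $\trdeg_\ff K_s=s+1$, with $K_s\subseteq L_{m+s}$. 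Applying the tower law to $\ff\subseteq K_s\subseteq L_{m+s}$ yields $\trdeg_{K_s}L_{m+s}=(m+s+1)-(s+1)=m$.

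It then remains to promote this to the field $K=\ff\langle R\rangle$. The linearity from part~(1) lets me solve each $u^{(m+j)}$ with $1\le j\le s$ rationally from $R^{(j)}$ in terms of strictly lower derivatives, and the defining relation $RQ=P$ makes $u^{(m)}$ algebraic over $K_s(u,\dots,u^{(m-1)})$; this relation is nontrivial in $u^{(m)}$ because $\gcd(P,Q)=1$ together with $\ord(R)=m$ forces $R\notin\ff(u,\dots,u^{(m-1)})$. Hence $L_{m+s}$ is algebraic over $K_s(u,\dots,u^{(m-1)})$, and comparison with $\trdeg_{K_s}L_{m+s}=m$ forces $u,\dots,u^{(m-1)}$ to be algebraically independent over $K_s$ for every $s$; since any algebraic relation over $K$ involves only finitely many $R^{(j)}$, they remain independent over $K$, giving $\trdeg_K\ff\langle u\rangle\ge m$. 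The finite-level estimate $\trdeg_K(K\cdot L_{m+s})\le\trdeg_{K_s}L_{m+s}=m$ supplies the reverse inequality, so $\trdeg_K\ff\langle u\rangle=m=\ord(R)$, as claimed.
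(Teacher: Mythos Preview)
Your proof is correct and follows essentially the same approach as the paper: for part~(1), both you and the paper induct on $s$ using the quotient rule, handle the base case by splitting on whether $\ord_u P=\ord_u Q$, and invoke $\gcd(P,Q)=1$ to ensure the leading coefficient $S_PQ-S_QP$ is nonzero (you supply the Euclid's-lemma detail that the paper leaves implicit). For part~(2), both arguments are transcendence-degree counts resting on part~(1); the paper packages the lower bound as a proof by contradiction while you compute $\trdeg_{K_s}L_{m+s}=m$ directly via the tower law and then pass to the union, but the content is the same.
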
}
\proof
(1) Show it by induction on $s$.
 For $s=1$, $(P/Q)'=\frac{P'Q-PQ'}{Q^2}$ and $P_1=P'Q-PQ'=S_PQ\cdot u^{(\ord (P)+1)}-S_QP\cdot u^{(\ord(Q)+1)}+T$ with $\ord(T)\leq m$.
 If $\ord(P)\neq\ord(Q)$, clearly, $\text{rk}(P_1)=(u^{(m+1)},1)$.
Otherwise, since $\gcd(P,Q)=1$, $S_PQ-S_QP\neq0$ and $\text{rk}(P_1)=(u^{(m+1)},1)$  follows.
Suppose it holds for $s-1$. Then $(P/Q)^{(s)}=\big(\frac{P_{s-1}}{Q^{s}}\big)'=\frac{P'_{s-1}Q-sP_{s-1}Q'}{Q^{s+1}}$.
Let $P_s=P'_{s-1}Q-sP_{s-1}Q'$. By the induction hypothesis, $\text{rk}(P_s)=(u^{(m+s)},1)$.

(2) It is trivial for the case $m=0$. 
Consider the case when $m \geq 1$. 
Since $u, u',\cdots, u^{(m)}$ are algebraically dependent over $\ff\langle \frac{P(u)}{Q(u)} \rangle$, $\trdeg_{\ff\langle \frac{P(u)}{Q(u)} \rangle}\ff\langle u \rangle$ $=$ $\trdeg_{\ff\langle \frac{P(u)}{Q(u)} \rangle}\ff\langle \frac{P(u)}{Q(u)} \rangle ( u, u',\cdots, u^{(m-1)})$. If $u, u',\cdots, u^{(m-1)}$ are algebraically dependent over $\ff\langle \frac{P(u)}{Q(u)} \rangle$, there exists $s \in \N$ such that $u, u',\cdots, $ $u^{(m-1)}$ are algebraically dependent over $\ff\Big( \frac{P(u)}{Q(u)}, (\frac{P(u)}{Q(u)})', \cdots, (\frac{P(u)}{Q(u)})^{(s)}\Big)$. Thus 
{\small \begin{equation}
\begin{aligned}
 &\trdeg\,\ff\Big(u, u',\cdots, u^{(m-1)}, \frac{P(u)}{Q(u)}, (\frac{P(u)}{Q(u)})^{'}, \cdots, (\frac{P(u)}{Q(u)})^{(s)}\Big)\Big/\ff\\=&\trdeg\,\ff\Big(\frac{P(u)}{Q(u)}, (\frac{P(u)}{Q(u)})', \cdots, (\frac{P(u)}{Q(u)})^{(s)}\Big)\Big/\ff +\\&\trdeg\,\ff\Big(\frac{P(u)}{Q(u)}, (\frac{P(u)}{Q(u)})', \cdots, (\frac{P(u)}{Q(u)})^{(s)}\Big)\Big( u, u',\cdots, u^{(m-1)}\Big)\Big/{\ff\Big(\frac{P(u)}{Q(u)}, (\frac{P(u)}{Q(u)})^{'}, \cdots, (\frac{P(u)}{Q(u)})^{(s)}\Big)} \\ \leq& s+m. \nonumber
 \end{aligned}
\end{equation}
}
So $\trdeg\,\ff( u, u',\cdots, u^{(m-1)})(\frac{P(u)}{Q(u)}, (\frac{P(u)}{Q(u)})', \cdots, (\frac{P(u)}{Q(u)})^{(s)})\big/{\ff( u, u',\cdots, u^{(m-1)})} \leq s$,  contradicting the fact that $\frac{P(u)}{Q(u)}, (\frac{P(u)}{Q(u)})', \cdots, (\frac{P(u)}{Q(u)})^{(s)}$ are algebraically independent over $\ff( u, u',\cdots, u^{(m-1)})$. Thus, $\trdeg_{\ff\langle \frac{P(u)}{Q(u)} \rangle}\ff\langle u \rangle =m$.
\qed

By the additivity property of transcendence degrees,  {Lemma \ref{lm-ord} (2) }implies that the order is additive with respect to the composition of differential rational functions.
\begin{cor}\label{additive}
Let $R_1,R_2\in\ff\langle u\rangle\backslash\ff$. Then $\ord_u\big(R_2(R_1(u))\big)=\ord_uR_1+\ord_uR_2$ .
\end{cor}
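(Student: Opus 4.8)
The plan is to derive the additivity of the order directly from the additivity (tower law) of ordinary transcendence degree, using Lemma \ref{lm-ord}(2) to translate each transcendence degree back into an order. Write $m_1=\ord_u R_1$ and $m_2=\ord_u R_2$, and set $w=R_2(R_1(u))\in\ff\langle u\rangle$. First I would set up the tower of differential fields $\ff\langle w\rangle\subseteq\ff\langle R_1(u)\rangle\subseteq\ff\langle u\rangle$. The inclusion $\ff\langle R_1(u)\rangle\subseteq\ff\langle u\rangle$ is immediate since $R_1(u)\in\ff\langle u\rangle$, and $\ff\langle w\rangle\subseteq\ff\langle R_1(u)\rangle$ holds because $w=R_2(R_1(u))$ is a differential rational expression in $R_1(u)$ and its derivatives, hence lies in $\ff\langle R_1(u)\rangle$.

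The crucial preliminary step is to check that $R_1(u)$ is itself differentially transcendental over $\ff$, so that it may legitimately play the role of the differential indeterminate in Lemma \ref{lm-ord}. Since $\ff\langle u\rangle=\ff(u,u',u'',\ldots)$ has infinite (ordinary) transcendence degree over $\ff$ while $\trdeg_{\ff\langle R_1(u)\rangle}\ff\langle u\rangle=m_1<\infty$ by Lemma \ref{lm-ord}(2), the field $\ff\langle R_1(u)\rangle$ cannot have finite transcendence degree over $\ff$; were $R_1(u)$ differentially algebraic over $\ff$ it would, contradicting this. Hence $R_1(u)$ is differentially transcendental over $\ff$, and the differential $\ff$-embedding $u\mapsto R_1(u)$ identifies $\ff\langle u\rangle$ with $\ff\langle R_1(u)\rangle$ as an abstract differential field over $\ff$; in particular $R_2$, viewed as a differential rational function of the differential indeterminate $R_1(u)$, has order $\ord_{R_1(u)}\!\big(R_2(R_1(u))\big)=\ord_u R_2=m_2$.

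With this in hand the three transcendence degrees are supplied by Lemma \ref{lm-ord}(2): applying it to $R_1(u)$ gives $\trdeg_{\ff\langle R_1(u)\rangle}\ff\langle u\rangle=m_1$; applying it to $R_2$ over the fresh differential indeterminate $R_1(u)$ gives $\trdeg_{\ff\langle w\rangle}\ff\langle R_1(u)\rangle=m_2$; and applying it to $w$ gives $\trdeg_{\ff\langle w\rangle}\ff\langle u\rangle=\ord_u(w)$ (note $w\notin\ff$, since $R_2\notin\ff$ evaluated at the differentially transcendental $R_1(u)$ is nonconstant). The tower law for transcendence degrees then yields $\ord_u(w)=\trdeg_{\ff\langle w\rangle}\ff\langle R_1(u)\rangle+\trdeg_{\ff\langle R_1(u)\rangle}\ff\langle u\rangle=m_2+m_1$, which is exactly the claimed identity. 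I expect the one genuinely delicate point to be the middle equality: one must justify treating $R_1(u)$ as an honest differential indeterminate so that Lemma \ref{lm-ord}(2) applies to $R_2$, and this is precisely what the differential transcendence of $R_1(u)$ over $\ff$ secures; everything else is routine bookkeeping with the tower law.
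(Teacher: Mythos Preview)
Your proposal is correct and follows exactly the approach the paper takes: it sets up the tower $\ff\langle R_2(R_1(u))\rangle\subset\ff\langle R_1(u)\rangle\subset\ff\langle u\rangle$ and invokes Lemma \ref{lm-ord}(2) together with additivity of transcendence degree. You have simply made explicit the justification (differential transcendence of $R_1(u)$ over $\ff$) that the paper leaves implicit in its one-line proof.
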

\proof  
This follows by considering  $\ff\langle R_2(R_1(u))\rangle\subset\ff\langle R_1(u)\rangle\subset\ff\langle u\rangle$ and Lemma \ref{lm-ord} (2).
%% $\trdeg\,\ff\langle u\rangle/\ff\langle R_2(R_1(u))\rangle=\trdeg\,\ff\langle u\rangle/\ff\langle R_1(u)\rangle+\trdeg\,\ff\langle R_1(u)\rangle/\ff\langle R_2(R_1(u))\rangle.$
\qed

\vskip5pt
 
The following result is an exercise from \cite[p.159, Ex. 9]{Kolchin1973} which will be used in Section 3.
\begin{lemma}\label{lm-mobuis}
Let $t, u\in\mathcal E$ {be}  differentially transcendental elements over $\ff$ and $\ff\langle t\rangle=\ff\langle u\rangle$.
Then there exist $a,b,c,d\in\ff$ with $ad-bc\neq0$ such that $u=(at+b)/(ct+d)$.  
\end{lemma}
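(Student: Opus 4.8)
The plan is to reduce this differential statement to the classical algebraic Möbius fact, using the additivity of order established in Corollary \ref{additive}. Since $\ff\langle t\rangle=\ff\langle u\rangle$ and both $t,u$ are differentially transcendental over $\ff$, I would first write $u=P(t)/Q(t)$ and $t=R(u)/S(u)$ in reduced form, with $P,Q\in\ff\{t\}$ and $R,S\in\ff\{u\}$. The decisive observation is that the composition of the two differential rational functions $R_1=P/Q$ and $R_2=R/S$ satisfies $R_2(R_1(t))=t$, which has order $0$. Applying Corollary \ref{additive} then yields $\ord_t(P/Q)+\ord_u(R/S)=0$, and since orders are nonnegative, both summands vanish. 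This forces $P,Q\in\ff[t]$ and $R,S\in\ff[u]$ to be \emph{ordinary} polynomials (no derivatives occur), so that $u\in\ff(t)$ and $t\in\ff(u)$; hence $\ff(t)=\ff(u)$ as non-differential fields.

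With the problem made purely algebraic, the key step is the standard fact that for $u=P(t)/Q(t)$ in lowest terms the degree $[\ff(t):\ff(u)]$ equals $\max(\deg P,\deg Q)$. I would establish this by noting that $t$ is a root of $F(T)=P(T)-u\,Q(T)\in\ff(u)[T]$, whose degree in $T$ is $\max(\deg P,\deg Q)$ (the leading term cannot cancel because $u\notin\ff$), and which is irreducible over $\ff(u)$: viewed in $\ff[u][T]$ it is linear in $u$ with coefficients $P(T)$ and $-Q(T)$ of content $1$ (as $\gcd(P,Q)=1$), hence primitive and irreducible by Gauss's lemma. Thus $F$ is, up to a scalar, the minimal polynomial of $t$ over $\ff(u)$, so $\max(\deg P,\deg Q)=[\ff(t):\ff(u)]$. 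Since $\ff(t)=\ff(u)$ gives $[\ff(t):\ff(u)]=1$, we conclude $\max(\deg P,\deg Q)=1$, whence $P=at+b$ and $Q=ct+d$ for some $a,b,c,d\in\ff$ and $u=(at+b)/(ct+d)$. Finally, if $ad-bc=0$ then $(a,b)$ and $(c,d)$ are proportional, which together with $\gcd(P,Q)=1$ would force $P,Q$ to be constant, contradicting $\max(\deg P,\deg Q)=1$; therefore $ad-bc\neq0$.

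I expect the only genuine obstacle to be the first paragraph: one must check carefully that Corollary \ref{additive} is applicable (both $R_1,R_2\notin\ff$, which holds since $u,t\notin\ff$) and interpret its order identity correctly for the identity composition so as to conclude that $P,Q$ carry no derivatives. Once this reduction to ordinary rational functions is secured, the remainder is the classical Lüroth-type degree computation and is entirely routine.
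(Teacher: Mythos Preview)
Your proof is correct and takes a genuinely different route from the paper's. The paper argues entirely within differential algebra: it observes that $P(y)-uQ(y)$ is irreducible in $\ff\langle u\rangle\{y\}$, takes a generic zero $s$ of the prime differential ideal $\sat(P-uQ)$, and uses a differential-isomorphism argument to conclude that $t$ itself is a generic zero, whence $\sat(P-uQ)=\sat(y-t)$; Lemma~\ref{lm-satA} then forces $y-t$ to be divisible by $P-uQ$, giving $\deg P,\deg Q\le 1$ and $\ord P=\ord Q=0$ in one stroke. Your approach instead first strips off the differential content by invoking Corollary~\ref{additive} on the composition $R_2\circ R_1=\mathrm{id}$ to force both orders to vanish, thereby reducing to the purely algebraic identity $\ff(t)=\ff(u)$, and then finishes with the classical degree formula $[\ff(t):\ff(u)]=\max(\deg P,\deg Q)$ via Gauss's lemma. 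Your route is more elementary and cleanly separates the ``differential'' step from the ``algebraic'' one, and it exploits a result (Corollary~\ref{additive}) already established in the paper; the paper's proof avoids that corollary and handles everything uniformly through generic points and saturation ideals, which is closer in spirit to the Ritt--Kolchin framework used elsewhere in the paper.
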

\proof Write $u=\frac{P(t)}{Q(t)}$ with $P,Q \in \ff\{y\}$ and $\gcd(P,Q)=1$. 
Observe that $P(y)-uQ(y)$ is irreducible in $\ff \langle u \rangle \{y\}$.
Let $\mathcal J=\sat(P-uQ)\subset \ff \langle u\rangle\{y\}$.
Fix a generic zero $s$ of $\mathcal J$, then we have $Q(s) \neq 0$. 
Indeed, if $Q(s)=0$, then $Q(y)$ is divisible by $P(y)-uQ(y)$ by Lemma \ref{lm-satA},  a contradiction. 
Thus, $u$ is differentially algebraic over $\ff \langle s\rangle$. 
Therefore, $s$ is differentially transcendental over $\ff$. 
So there exists a differential isomorphism $\phi:\ff \langle s\rangle \cong \ff \langle t \rangle$ with $\phi(s)=t$. 
Since $\phi(u)=u$, $\phi$ is a differential isomorphism over $\ff \langle u \rangle$.
Thus, $t$ is a generic zero of $\mathcal J$.  
Since $t$ is also a generic zero of $\sat(y-t)\subset \ff \langle u\rangle\{y\}$, $\mathcal J=\sat(y-t)$. 
By Lemma \ref{lm-satA}, $y-t$ is divisible by $P-uQ$ over $\ff\langle u\rangle$. 
So $P(y), Q(y) \in \ff[y]$ are both of degree at most 1. 
Thus, there exist $a,b,c,d\in\ff$ such that  $P(y)=ay+b$ and $Q(y)=cy+d$ with $ad-bc\neq0$.
 \qed

\section{Unirational differential curves and proper differential rational parametrizations}
In this section, we introduce the notions of unirational differential curves and proper diffferential rational parametrizations, and investigate the basic properties for  proper differential rational parametrizations.

\begin{definition}
A  {\rm differential curve} (over $\ff$) is a differential variety $\C\subset\mathbb A^n(\ee)$ (over $\ff$) which has  differential dimension 1. 
If additionally $\C$ is irreducible, $\C$ is called an irreducible differential curve.
A differential curve  $\C\subset \mathbb A^2$ is called a plane differential curve.
\end{definition}

Throughout the paper, we focus on the study of plane differential curves over the base differential field $\ff$,
 and so we always omit ``plane" and ``over $\ff$" for convenience.

If $\C\subset \mathbb A^2$ is an irreducible differential curve, then by the theory of differential characteristic sets, 
there exists a unique irreducible differential polynomial $A\in\ff\{x,y\}$ 
(up to an element in $\ff$) such that $\C$ is the general component of $A$  $\big($that is,  $\C=\V\big(\sat(A)\big)\big)$.
We call $\C$ {\it the differential curve defined by $A$}, and denote it by $(\C,A)$ for simplicity.

\begin{definition}(Unirational differential curves)\label{def1}
 Let $(\C,A)$ be an irreducible differential curve. 
We call $\C$ {\rm a unirational differential curve} if $\C$ has a generic point of the form 
\begin{equation} \label{eq-para}
\mathcal P(u)=\Big(\frac{P_1(u)}{Q_1(u)},\frac{P_2(u)}{Q_2(u)}\Big),
\end{equation}
where $u\in\ee$ is differentially transcendental over {$\ff \langle x,y \rangle$}, $P_i, Q_i \in \ff\{u\}$\footnote{Here we automatically have at least one $P_i/Q_i\in\ff\langle u\rangle\backslash\ff$.} 
and $\text{gcd}(P_i,Q_i)=1$ for $i=1,2$. 
And we call {\rm(\ref{eq-para})} a {\rm differential rational parametrization} of $\C$ or $A$.
 
\end{definition}

Recall that the differential L\"{u}roth theorem \cite{Ritt} tells us that any differential field $K$ with $\ff\subset K\subset \ff\langle u\rangle$ is generated by a single element. Combined with the differential L\"{u}roth theorem, we have the following equivalent definition for unirational differential curves in terms of the language of  differential fields.

\begin{prop}
An irreducible differential curve $\C$ is unirational if and only if the differential function field of $\C$, 
$\ff\langle \C \rangle=\text{\rm Frac}(\ff\{x,y\}/\mathbb I(\C))$, is differentially isomorphic to $\ff \langle u \rangle$ over $\ff$. 
\end{prop}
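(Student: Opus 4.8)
The plan is to exploit the standard correspondence between a prime differential ideal and its generic points: for any generic point $\eta$ of $\C$ one has a differential $\ff$-isomorphism $\ff\langle\C\rangle=\mathrm{Frac}(\ff\{x,y\}/\mathbb{I}(\C))\cong\ff\langle\eta\rangle$, since $\mathbb{I}(\eta)=\mathbb{I}(\C)=\sat(A)$. Thus I would reduce the whole statement to translating ``$\C$ has a generic point with coordinates in $\ff\langle u\rangle$'' into ``$\ff\langle\C\rangle\cong\ff\langle u\rangle$ over $\ff$''. The one genuinely differential ingredient making the two directions match is the differential L\"uroth theorem, which controls the differential subfields of $\ff\langle u\rangle$.

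For the forward implication, I would assume $\C$ is unirational, with generic point $\mathcal{P}(u)=(P_1/Q_1,P_2/Q_2)$ and $u$ differentially transcendental over $\ff$. Then $\ff\langle\C\rangle\cong\ff\langle\mathcal{P}(u)\rangle=K$, where $K:=\ff\langle P_1/Q_1,P_2/Q_2\rangle$ is a differential field with $\ff\subset K\subseteq\ff\langle u\rangle$. By the differential L\"uroth theorem, $K=\ff\langle v\rangle$ for a single $v\in\ff\langle u\rangle$. Since $\C$ has differential dimension $1$, the differential transcendence degree over $\ff$ of $\ff\langle\C\rangle$, hence of $K=\ff\langle v\rangle$, equals $1$, and this forces $v$ to be differentially transcendental over $\ff$. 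Finally, for two elements $u,v$ each differentially transcendental over $\ff$, the assignment $\delta^k v\mapsto\delta^k u$ is a differential $\ff$-isomorphism of the polynomial rings $\ff\{v\}\cong\ff\{u\}$ that extends to the fraction fields, giving $\ff\langle v\rangle\cong\ff\langle u\rangle$ over $\ff$. Composing, $\ff\langle\C\rangle\cong\ff\langle u\rangle$ over $\ff$.

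For the converse, I would suppose $\psi:\ff\langle\C\rangle\to\ff\langle u\rangle$ is a differential $\ff$-isomorphism. Let $(\bar x,\bar y)$ denote the images of $x,y$ in $\ff\{x,y\}/\mathbb{I}(\C)$, the canonical generic point of $\C$. Set $R_1:=\psi(\bar x)$ and $R_2:=\psi(\bar y)$, and write each $R_i=P_i/Q_i$ in reduced form, so that $\mathcal{P}(u):=(R_1,R_2)\in\ff\langle u\rangle^2$. Because $\psi$ fixes $\ff$ and commutes with $\delta$, for every $f\in\ff\{x,y\}$ one has $f(R_1,R_2)=\psi\big(f(\bar x,\bar y)\big)$; as $\psi$ is injective, $f(R_1,R_2)=0$ if and only if $f\in\mathbb{I}(\C)$. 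Hence $\mathbb{I}(\mathcal{P}(u))=\mathbb{I}(\C)=\sat(A)$, i.e.\ $\mathcal{P}(u)$ is a generic point of $\C$; moreover $R_1,R_2$ cannot both lie in $\ff$, since $\psi$ is surjective onto $\ff\langle u\rangle\neq\ff$. Therefore $\C$ is unirational.

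I expect the main obstacle to sit in the forward direction, as it is the only place requiring genuine differential-algebra input: invoking the differential L\"uroth theorem to realize $K$ as a simple extension $\ff\langle v\rangle$, together with the transcendence-degree bookkeeping that pins down $v$ as differentially transcendental (so that $\ff\langle v\rangle\cong\ff\langle u\rangle$, rather than a proper differentially algebraic subextension). The converse is essentially formal, amounting to transporting the canonical generic point of $\C$ along $\psi$.
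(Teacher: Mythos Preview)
Your proof is correct and follows essentially the same approach as the paper: both directions hinge on the identification $\ff\langle\C\rangle\cong\ff\langle\mathcal P(u)\rangle$ together with the differential L\"uroth theorem for the forward direction, and on transporting the canonical generic point along the isomorphism for the converse. Your argument is in fact slightly more explicit than the paper's, since you justify via the differential dimension that the L\"uroth generator $v$ is differentially transcendental over $\ff$, whereas the paper simply asserts this for its generator $R(u)$.
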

\proof   Let $\C$ be a unirational differential curve with a differential rational parametrization $\P(u)$.
By the differential L\"{u}roth theorem \cite{Ritt}, there exists $R(u) \in \ff \langle u \rangle \backslash \ff$ s.t. $\ff \langle \P(u) \rangle = \ff \langle R(u) \rangle$. Then it is easy to verify that the parametrization $\P(u)$ defines a differential isomorphism\begin{equation}\begin{aligned}\varphi: &\ff \langle \C \rangle \longrightarrow \ff \langle R(u) \rangle\\&\hspace{-0.12cm}f(x,y)\longmapsto f(\P(u)).\end{aligned}\end{equation} Since $R(u)$ is differentially transcendental over $\ff$,  $\ff \langle R(u) \rangle $ is differentially isomorphic to $\ff \langle u\rangle$. Therefore, $\ff\langle \C \rangle$ is differentially isomorphic to $\ff \langle u \rangle$. Conversely, let $\varphi: \ff\langle \C \rangle \rightarrow \ff \langle u \rangle$ be a differential isomorphism. Let $\P(u)= (\varphi(x), \varphi(y))$. Then $\P(u)\notin \ff^2$ and $\mathbb I(\P(u))=\mathbb I(\C)$. Thus, $\C$ is unirational with a differential rational parametrization $\P(u)$.
\qed.

\vskip3pt
For ease of notation, in this paper when we speak of a differential parametrization $\mathcal P(u)= (\frac{P_1}{Q_1},\frac{P_2}{Q_2})\in \ff\langle u\rangle^2$, we always assume each $P_i/Q_i$ is in reduced form, that is, $P_i,Q_i\in\ff\{u\}$ and $\gcd(P_i,Q_i)=1$. 
{And we define the order of $\mathcal P(u)$ to be $\max \{\ord_u(\frac{P_1}{Q_1}),\ord_u(\frac{P_2}{Q_2})\}$, denoted by $\ord(\mathcal P)$}.
The following result shows that differential parametrizations of a unirational differential curve satisfy certain order property.

\begin{prop} \label{prop-orderdifference}
Let $(\C,A)$ be a unirational differential curve  with $\ord_xA\geq0$ and $\ord_yA\geq0$. 
Suppose $\mathcal P(u)=(P_1/Q_1,P_2/Q_2)\in \ff\langle u\rangle^2$ is a differential rational parametrization of $\C$. 
Then $$\ord_xA+\ord_u(P_1/Q_1)=\ord_yA+\ord_u(P_2/Q_2).$$
In particular, $\ord_xA\leq \ord_u(P_2/Q_2)$ and $\ord_yA\leq \ord_u(P_1/Q_1)$.
\end{prop}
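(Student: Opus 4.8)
The plan is to translate the order equality into a statement about transcendence degrees over the tower of differential fields $\ff\subseteq\ff\langle\eta_i\rangle\subseteq\ff\langle\C\rangle\subseteq\ff\langle u\rangle$, where $(\eta_1,\eta_2)=\mathcal P(u)$ is the generic point of $\C$, so that $\mathbb I(\eta_1,\eta_2)=\mathbb I(\C)=\sat(A)$ and $\eta_i=P_i/Q_i$. Write $p=\ord_xA$, $q=\ord_yA$, and $m_i=\ord_u(\eta_i)$. First I would record, using Lemma \ref{lm-ord}(2), that $\trdeg_{\ff\langle\eta_i\rangle}\ff\langle u\rangle=m_i$ for $i=1,2$, since $\ff\langle\eta_i\rangle=\ff\langle P_i/Q_i\rangle$.

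Next I would show that both $\eta_1$ and $\eta_2$ are differentially transcendental over $\ff$. Since $\C$ has differential dimension $1$, the field $\ff\langle\C\rangle$ has differential transcendence degree $1$ over $\ff$; if, say, $\eta_1$ were differentially algebraic over $\ff$, then because $y$ occurs in $A$ (as $\ord_yA\geq0$) the relation $A(\eta_1,\eta_2)=0$ would make $\eta_2$ differentially algebraic over $\ff\langle\eta_1\rangle$, forcing the differential transcendence degree of $\ff\langle\C\rangle$ over $\ff$ to be $0$, a contradiction. Hence each $\eta_i$ serves as a differential transcendence basis of $\ff\langle\C\rangle$ over $\ff$.

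The heart of the argument is the identity $\trdeg_{\ff\langle\eta_1\rangle}\ff\langle\C\rangle=q$, and symmetrically $\trdeg_{\ff\langle\eta_2\rangle}\ff\langle\C\rangle=p$. For the upper bound, the relation $A(\eta_1,\eta_2)=0$ together with the non‑vanishing of the separant at the generic point expresses $\eta_2^{(q)}$ and all its higher derivatives algebraically over $\ff\langle\eta_1\rangle(\eta_2,\dots,\eta_2^{(q-1)})$, so the transcendence degree is at most $q$. For the lower bound I would argue by contradiction: an algebraic dependence of $\eta_2,\dots,\eta_2^{(q-1)}$ over $\ff\langle\eta_1\rangle$, after clearing denominators, yields a nonzero $B\in\ff\{x,y\}$ with $B(\eta_1,\eta_2)=0$ and $\ord_yB<q$, so $B\in\mathbb I(\C)=\sat(A)$. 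Choosing the elimination ranking in which every derivative of $y$ ranks above every derivative of $x$, the leader of $A$ is $y^{(q)}$, and since $\ord_yB<q$ the polynomial $B$ is reduced with respect to the characteristic set $\{A\}$ of $\sat(A)$ guaranteed by Lemma \ref{lm-satA}; hence $B=0$, a contradiction. I expect this to be the main obstacle, since it is exactly where the characteristic‑set structure of $\sat(A)$ (rather than merely the single relation $A=0$) is indispensable.

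Finally I would assemble the pieces by additivity of transcendence degree. Over $\ff\langle\eta_1\rangle\subseteq\ff\langle\C\rangle\subseteq\ff\langle u\rangle$ one gets $m_1=\trdeg_{\ff\langle\eta_1\rangle}\ff\langle\C\rangle+\trdeg_{\ff\langle\C\rangle}\ff\langle u\rangle=q+b$, and symmetrically $m_2=p+b$, where $b=\trdeg_{\ff\langle\C\rangle}\ff\langle u\rangle\geq0$. Subtracting gives $m_1-m_2=q-p$, that is, $\ord_xA+\ord_u(P_1/Q_1)=\ord_yA+\ord_u(P_2/Q_2)$. Since $b\geq0$, the same two equalities give $m_2\geq p$ and $m_1\geq q$, which are precisely the ``in particular'' inequalities $\ord_xA\leq\ord_u(P_2/Q_2)$ and $\ord_yA\leq\ord_u(P_1/Q_1)$.
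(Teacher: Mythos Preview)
Your argument is correct, but it follows a different route from the paper. The paper proves the equality by passing to K\"ahler differentials: it characterises $s_1=\ord_xA$ and $s_2=\ord_yA$ as the minimal indices $(\ell_1,\ell_2)$ for which the family $(P_1/Q_1)^{(0)},\dots,(P_1/Q_1)^{(\ell_1)},(P_2/Q_2)^{(0)},\dots,(P_2/Q_2)^{(\ell_2)}$ is algebraically dependent over $\ff$, translates this into linear dependence of the corresponding $d(P_i/Q_i)^{(k)}$ in $\Omega_{\ff\langle u\rangle/\ff}$, and then uses Lemma~\ref{lm-ord}(1) (each $d(P_i/Q_i)^{(k)}$ is triangular with leading term in $du^{(m_i+k)}$) to force $m_1+s_1=m_2+s_2$. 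The ``in particular'' inequalities come from a pigeonhole count: with $\ell_1=m_2$ and $\ell_2=m_1$ one has $m_1+m_2+2$ elements lying in $\ff(u,\dots,u^{(m_1+m_2)})$.

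Your approach instead works entirely with the tower $\ff\langle\eta_i\rangle\subset\ff\langle\C\rangle\subset\ff\langle u\rangle$ and additivity of transcendence degree, invoking Lemma~\ref{lm-ord}(2) for the outer step and the characteristic-set property (Lemma~\ref{lm-satA}) for the inner step $\trdeg_{\ff\langle\eta_1\rangle}\ff\langle\C\rangle=\ord_yA$. This is exactly the ``relative order'' computation the paper carries out later in the proof of Theorem~\ref{th-order} for the proper case; you have observed that the same computation goes through without properness once one keeps track of the residual term $b=\trdeg_{\ff\langle\C\rangle}\ff\langle u\rangle\geq 0$. The payoff is that your proof is more elementary (no K\"ahler differentials) and makes the inequalities $m_i\geq\ord A$ an immediate by-product of $b\geq 0$, whereas the paper's differential-module argument is shorter and gives a more explicit reason (the triangular shape in the $du^{(j)}$) for why the two orders must line up.
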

{\proof Denote $m_i=\ord_u(P_i/Q_i)$ for $i=1,2$. Then $m_i\geq0$.
Let $s_1=\ord_xA$ and $s_2=\ord_yA$. The fact $\mathbb I(\mathcal P(u))=\sat(A)\subset\ff\{x,y\}$ implies  that
$s_1$ and $s_2$ are respectively the minimal indices $\ell_1,\ell_2$ such that  
\begin{equation} \label{eq-orderdifference}
P_1/Q_1, (P_1/Q_1)',\ldots,(P_1/Q_1)^{(\ell_1)}, P_2/Q_2, (P_2/Q_2)',\ldots,(P_2/Q_2)^{(\ell_2)}
\end{equation} are algebraically dependent over $\ff$, or equivalently, in the module $(\Omega_{\ff\langle u\rangle}/\ff, d)$ of Kahler differentials, $d(P_1/Q_1), d(P_1/Q_1)',$ $\ldots,d(P_1/Q_1)^{(\ell_1)}, d(P_2/Q_2),d(P_2/Q_2)',\ldots,d(P_2/Q_2)^{(\ell_2)}$ are linearly dependent over $\ff\langle u\rangle$ \cite[p. 94]{Johnson1969}. 
By Lemma \ref{lm-ord}, each $(P_i/Q_i)^{(k)}$  is of order $m_i+k$, so $d(P_i/Q_i)^{(k)}$ is linear in $d(u^{(m_i+k)})$ for  {$k >0$}.
Thus, $m_1+s_1=m_2+s_2$ follows.
Note that when $\ell_1=m_2$ and $\ell_2=m_1$, 
the $m_1+m_2+2$ elements in (\ref{eq-orderdifference}) are contained in 
$\ff( u, u',\cdots, u^{(m_1+m_2)})$, and thus are algebraically dependent over $\ff$.
So $\ord_xA\leq m_2$ and $\ord_yA\leq m_1$.
%Since 
%\begin{center}$\ff \subset \ff\Big( \frac{P_1(u)}{Q_1(u)}, (\frac{P_1(u)}{Q_1(u)})', \cdots, (\frac{P_1(u)}{Q_1(u)})^{(n)},\frac{P_2(u)}{Q_2(u)}, (\frac{P_2(u)}{Q_2(u)})',\cdots, (\frac{P_2(u)}{Q_2(u)})^{(m)}\Big) \subset \ff( u, u',\cdots, u^{(m+n)}),$\end{center}we get 
%\begin{center}$\trdeg_\ff\ff\Big(\frac{P_1(u)}{Q_1(u)}, (\frac{P_1(u)}{Q_1(u)})', \cdots, (\frac{P_1(u)}{Q_1(u)})^{(n)},\frac{P_2(u)}{Q_2(u)}, (\frac{P_2(u)}{Q_2(u)})',\cdots, (\frac{P_2(u)}{Q_2(u)})^{(m)}\Big)\leq m+n+1.$\end{center}
%This implies $\frac{P_1(u)}{Q_1(u)}, (\frac{P_1(u)}{Q_1(u)})', \cdots, (\frac{P_1(u)}{Q_1(u)})^{(n)},\frac{P_2(u)}{Q_2(u)}, (\frac{P_2(u)}{Q_2(u)})^{'},\cdots, (\frac{P_2(u)}{Q_2(u)})^{(m)}$ are algebraically dependent over $\ff$. Thus $\ord_xA \leq n, \ord_yA \leq m$. 
\qed
}

\vskip5pt
Below we give some motivated examples and non-examples for unirational differential curves.

\begin{example} \label{ex-1}

\begin{itemize}
\item[(1)] Let $A=x'^2-4xy^2\in \mathbb Q(t)\{x,y\}$ with $\delta=\frac{\mathrm d}{\mathrm d t}$. 
Then $(\C,A)$ is a unirational differential curve with a differential rational parametrization $\mathcal P_1=(u^2,u')$.
Note that $\mathcal P_2=((u')^2,u'')$ is another  parametrization of $(\C,A)$
and $\ord(\mathcal P_1)<\ord(\mathcal P_2)$.

\item[(2)] Let $A=y'-x'\in \mathbb Q(t)\{x,y\}$. Then $(\C,A)$ is not unirational. 
{ Actually, if $A=B^{(k)}$ for some $B\in \ff\{x,y\}\backslash\ff$ and $k>0$, then $(\C,A)$ is not unirational. 
Or otherwise, there exists $\P(u)\in\ff\langle u\rangle^2$ such that $\mathbb I(\P(u))=\sat(A)$, 
which implies that $b=B(\P(u)) \in {\mathcal F\langle u\rangle}$ is differentially algebraic over $\ff$ and consequently,
$b\in\ff$. Thus,  $B(x,y)-b \in \sat(A)$.
By Lemma \ref{lm-satA}, $B(x,y)-b$ is divisible by $A$, a contradiction to $A=B^{(k)}$.}
% Let $A, B\in \ff\{x,y\}\backslash\ff$. If there exists a monic differential polynomial $T(z)\in \ff\{x,y\}\{z\}$ of positive order such that $A=T(B)$, then $A$ is not unirational.
\end{itemize}
\end{example}

From the above examples, we learn that not all differential curves are unirational and 
for a unirational differential curve, its differential rational parametrizations are not unique. 
{In fact, if $\mathcal P_1(u)$ is a differential rational parametrization of $(\C,A)$, 
then for any $R(u)\in  \ff\langle u\rangle\backslash  \ff$, $\mathcal P_2=\mathcal P_1\big(R(u) \big)$ is also a differential rational parametrization of $(\C,A)$, and thus $\C$ has infinitely many differential rational parametrizations.}
These facts lead to the following two natural problems:
\vskip2pt
\begin{center}\hskip.1truecm {\bf Problem 1.} Given $A\in\ff\{x,y\}$, decide whether the differential curve $(\C,A)$ is  unirational or not.

 \vskip2pt
\hskip-.26truecm {\bf Problem 2.} If $(\C,A)$ is unirational, find ``optimal" differential rational parametrizations for it w.r.t. some criteria, for instance,  having minimal order and degree .
 \end{center}

We first study Problem 2 in this section by introducing the notion of proper differential rational parametrizations and give the main basic properties, while leaving Problem 1 to be considered in  Section 5.

\begin{definition}
Let $\C$ be a unirational  differential curve with a differential rational parametrization $\mathcal P(u)=\left(\frac{P_1(u)}{Q_1(u)}, \frac{P_2(u)}{Q_2(u)}\right)$. 
The parametrization $\mathcal P(u)$ is said to be {\rm proper} if $\ff\langle\frac{P_1(u)}{Q_1(u)}, \frac{P_2(u)}{Q_2(u)}\rangle=\ff\langle u \rangle$.
\end{definition}

Equivalently, the notion of properness can be stated by means of differentially birational maps between $\C$ and $\mathbb A^1$. 
More precisely, let $(\C,A)$ be a unirational differential curve with a differential rational parametrization $\mathcal P(u)\in\ff\langle u\rangle^2$. 
This $\mathcal P(u)$  induces the differentially rational map
\[ \begin{array}{ccc} \mathcal P: \mathbb A^1 & --\hskip-.1truecm\to &\C\subset\mathbb A^2\\ 
\quad u &  {\longmapsto } &  \mathcal P(u). \end{array} \]
Then the differential parametrization $\mathcal P(u)$ is proper if and only if the map $\mathcal P$ is differentially birational, that is, $\mathcal P$ has an inverse differential rational map 
\[ \begin{array}{ccc} \mathcal U: \C & --\hskip-.1truecm\to& \mathbb A^1\\ 
\qquad (x,y) & \longmapsto &  U(x,y), \end{array} \]
where $U(x,y)\in\ff\langle x,y\rangle$ and the denominator of $U$ does not vanish identically on $\C$.
This $\mathcal U$ is called the inversion  of the proper differential rational parametrization $\mathcal P(u)$. 

In \cite{Gao2003}, Gao defined properness for differential rational  parametric equations (DPREs) and under his definition, $\left(\frac{P_1(u)}{Q_1(u)}, \frac{P_2(u)}{Q_2(u)}\right)$ is called proper if for a generic zero $(a_1,a_2)$ of $\C$, there exists a unique $\tau \in \ee$ such that $a_i=P_i(\tau)/Q_i(\tau)$. 
By \cite[Theorem 6.1]{Gao2003}, the equivalence of these  definitions can be easily seen.

At the same paper, Gao provided a method  \cite[Theorem 6.2]{Gao2003} to find a proper re-parameterization for any improper DPREs  based on a constructive proof of differential L\"uroth's theorem(\cite{Ritt}). 
Given an arbitrary rational parametrization $\mathcal P(u)$ of a unirational differential curve $\C$, Gao's method produces an algorithm to compute a proper differential rational parametrization for $\C$  from $\mathcal P(u)$, which in particular shows that each unirational differential curve has  a proper rational parametrization.

In the following we show that proper differential rational parametrizations possess essential properties of the unirational differential curves. 
We first give the order property of proper differential rational parametrizations. 
Recall that the order of a reduced differential rational function is equal to the maximum of the orders of its denominator and numerator.

\begin{theorem}  \label{th-order}
Let $(\C,A)$ be a unirational differential curve and $\big(\frac{P_1(u)}{Q_1(u)},\frac{P_2(u)}{Q_2(u)}\big)$ be a proper differential rational parametrization  of $\C$. Then we have 
\begin{equation} \label{eq-order}
\ord\Big( \frac{P_1(u)}{Q_1(u)}\Big)= \ord_yA,\,\,\, \ord\Big(\frac{P_2(u)}{Q_2(u)}\Big)= \ord_xA.
\end{equation}
\end{theorem}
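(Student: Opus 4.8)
The plan is to recast both orders in (\ref{eq-order}) as differential transcendence degrees and to use properness precisely to identify $\ff\langle u\rangle$ with the function field of $\C$. Write $\eta=(x,y)=(P_1/Q_1,P_2/Q_2)$ for the generic point of $\C$ furnished by the parametrization, so $\mathbb I(\eta)=\sat(A)$, and set $m_i=\ord_u(P_i/Q_i)$, $s_1=\ord_xA$, $s_2=\ord_yA$. Following Proposition \ref{prop-orderdifference}, I would first assume $s_1,s_2\ge0$, i.e. that both $x,y$ occur in $A$; the remaining case, where one variable is absent and the corresponding coordinate of $\eta$ is a constant, is settled directly by the order conventions, since then that coordinate lies in $\ff$ and the matching order of $A$ is $-\infty$.

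First I compute $m_1$ intrinsically. Since $P_1/Q_1\notin\ff$, Lemma \ref{lm-ord}(2) gives $\trdeg_{\ff\langle x\rangle}\ff\langle u\rangle=m_1$, and properness is exactly the statement $\ff\langle x,y\rangle=\ff\langle u\rangle$, so $m_1=\trdeg_{\ff\langle x\rangle}\ff\langle x,y\rangle$. This is where properness does the essential work: it collapses $\trdeg_{\ff\langle x,y\rangle}\ff\langle u\rangle$ to zero, turning the ambient order $m_1$ into an invariant of the curve. It then remains to bound this degree by $s_2$. Working under an elimination ranking with the derivatives of $x$ below those of $y$, the leader of $A$ is $y^{(s_2)}$, and by Lemma \ref{lm-satA} $A$ is a characteristic set of $\sat(A)=\mathbb I(\eta)$; its separant $S_A$, being reduced with respect to $A$ and nonzero, does not lie in $\sat(A)$, so $S_A(\eta)\ne0$. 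The relation $A(\eta)=0$ then exhibits $y^{(s_2)}$ as algebraic over $\ff\langle x\rangle(y,\dots,y^{(s_2-1)})$, and successively differentiating $A$ and dividing by $S_A(\eta)$ shows every higher derivative $y^{(s_2+j)}$ is algebraic over the same field. Hence $\ff\langle x,y\rangle$ is algebraic over $\ff\langle x\rangle(y,\dots,y^{(s_2-1)})$ and $\trdeg_{\ff\langle x\rangle}\ff\langle x,y\rangle\le s_2$, giving $m_1\le s_2$. Combined with the reverse inequality $s_2\le m_1$ of Proposition \ref{prop-orderdifference}, this yields $m_1=s_2=\ord_yA$; the order-difference identity $s_1+m_1=s_2+m_2$ of the same proposition then forces $m_2=s_1=\ord_xA$. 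One could instead prove $\trdeg_{\ff\langle y\rangle}\ff\langle x,y\rangle=s_1$ by the symmetric ranking and avoid appealing to the order-difference identity altogether.

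I expect the main obstacle to be the characteristic-set transcendence-degree estimate, specifically validating the separant induction: one needs $S_A(\eta)\ne0$, which holds because $S_A$ is reduced with respect to $A$ and $A$ is a characteristic set, and one needs $A$ to have positive degree in its leader $y^{(s_2)}$. If one prefers the self-contained symmetric argument (proving both equalities directly rather than invoking the order-difference identity), the additional delicate point is the algebraic independence of $y,\dots,y^{(s_2-1)}$ over $\ff\langle x\rangle$: a dependence would, after clearing denominators in $\ff\langle x\rangle=\ff(x,x',\dots)$, produce a nonzero element of $\sat(A)$ reduced with respect to $A$, contradicting that $A$ is a characteristic set, and here the differential transcendence of $x$ over $\ff$ is what guarantees the cleared denominators do not vanish at $\eta$. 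The conceptual core, by contrast, is short: properness alone is what permits replacing $\ff\langle u\rangle$ by $\ff\langle x,y\rangle$, after which the two equalities in (\ref{eq-order}) become the standard identification of $\trdeg_{\ff\langle x\rangle}\ff\langle x,y\rangle$ and $\trdeg_{\ff\langle y\rangle}\ff\langle x,y\rangle$ with $\ord_yA$ and $\ord_xA$.
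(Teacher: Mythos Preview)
Your proposal is correct and follows essentially the same route as the paper: both use Lemma~\ref{lm-ord}(2) to rewrite $m_1$ as $\trdeg_{\ff\langle P_1/Q_1\rangle}\ff\langle u\rangle$, invoke properness to replace $\ff\langle u\rangle$ by $\ff\langle x,y\rangle$, identify this transcendence degree with $\ord_yA$ via characteristic-set theory, and then finish the second equality with the order-difference identity of Proposition~\ref{prop-orderdifference}. The only cosmetic difference is that the paper quotes Kolchin's relative-order formula $\ord_x\sat(A)=\trdeg_{\ff\langle x\rangle}\ff\langle x,y\rangle$ together with the characteristic-set identity $\ord_yA=\ord_x\sat(A)$ from \cite{charset} to get the equality in one stroke, whereas you unpack the upper bound $\trdeg_{\ff\langle x\rangle}\ff\langle x,y\rangle\le s_2$ by the separant induction and then close it with the lower bound $s_2\le m_1$ from Proposition~\ref{prop-orderdifference}; and for the degenerate case the paper writes out the M\"obius form of the non-constant coordinate via Lemma~\ref{lm-mobuis}, while you only sketch it (note that when, say, $P_1/Q_1=a_1\in\ff$ one has $A=x-a_1$, so it is $y$ that is absent from $A$---your phrase ``corresponding coordinate'' is slightly ambiguous here, but the verification is the one you intend).
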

\begin{proof}
For the special cases that  either $\frac{P_1(u)}{Q_1(u)}=a_1\in\ff$ or $\frac{P_2(u)}{Q_2(u)}=a_2\in\ff$,  
{by Lemma \ref{lm-mobuis} we have either 1) $A=x-a_1$ and   $\mathcal P(u)=(a_1,\frac{\alpha_1u+\beta_1}{\gamma_1u+\xi_1})$, 
or 2) $A=y-a_2$ and  
$\mathcal P(u)=(\frac{\alpha_2u+\beta_2}{\gamma_2u+\xi_2},a_2)$, for some $\alpha_i,\beta_i,\gamma_i,\xi_i \in \ff$ with $\alpha_i\xi_i-\beta_i\gamma_i \neq 0$, where (\ref{eq-order}) holds. }
So it suffices to consider the case that $m_i=\ord(\frac{P_i(u)}{Q_i(u)})\geq0$ for $i=1, 2$.

By \cite{Kolchin1947},  the relative order of $\sat(A)$ w.r.t. the parametric set $\{x\}$ is $$\ord_x\sat(A) = \trdeg_{\ff\langle \frac{P_1(u)}{Q_1(u)} \rangle}\ff\langle \frac{P_1(u)}{Q_1(u)}, \frac{P_2(u)}{Q_2(u)} \rangle =\trdeg_{\ff\langle \frac{P_1(u)}{Q_1(u)} \rangle}\ff\langle u \rangle.$$ 
By Lemma \ref{lm-ord}, $\trdeg_{\ff\langle \frac{P_1(u)}{Q_1(u)}\rangle}\ff\langle u \rangle =m_1$. 
Since $A$ is a characteristic set of $\sat(A)$ w.r.t. the elimination ranking: $x<y$, 
$\ord_yA= \ord_x(\sat(A))=m_1$ (\cite{charset}). 
By Proposition \ref{prop-orderdifference}, $\ord_xA-\ord_yA=m_2-m_1$, and thus $\ord_xA=m_2, \ord_yA=m_1$.
\end{proof}

\begin{remark} \label{rk-order} 
In the algebraic case, \cite[Theorem 4.21]{Winkler} characterizes the properness of a parametrization via the degree of the implicit equation of a unirational curve. That is, a parametrization $P(t)$ of a unirational curve $\mathcal V(f)$ is proper if and only if $\deg(P(t))=\max\{\deg_xf,\deg_yf\}$. However, in the differential case, we do not have such a characterization of properness via the orders of the differential curves
and the converse of Theorem \ref{th-order} is not valid.
For a non-example, let $A=y'-xy\in\ff\{x,y\}$. Clearly,  $\mathcal P(u)=(\frac{2u'}{u},u^{2})$ is a differential rational parametrization of $(\C, A)$ which satisfies (\ref{eq-order}). But $\ff\langle\mathcal P(u)\rangle=\ff\langle u^2 \rangle\neq \ff\langle u\rangle$, so $\mathcal P(u)$ is not proper.
\end{remark}

As a direct consequence of Theorem \ref{th-order}, we could show that  an algebraic curve  is unirational  in the algebraic sense if and only if it is unirational in the differential sense.

\begin{cor}
Let $(\ff,\delta)$ be a differential field which is algebraically closed. 
Let $A\in\ff[x,y]$ be an irreducible polynomial.
Then the differential curve $(\C,A)$ is unirational if and only if the genus of the algebraic curve defined by $A=0$ is $0$.
\end{cor}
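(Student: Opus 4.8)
The plan is to reduce the differential unirationality of the order-zero curve $(\C,A)$ to the classical genus criterion \cite[Theorem 4.63]{Winkler}, by showing that in this case proper differential rational parametrizations of order $0$ are exactly the proper rational parametrizations of the algebraic curve $A=0$. Throughout, note that $A\in\ff[x,y]$ forces $\ord_xA=\ord_yA=0$; the degenerate cases $A=x-a$ or $A=y-a$ are lines, which have genus $0$ and are trivially differentially unirational, so I may assume that both $x$ and $y$ occur in $A$.

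For the forward direction, suppose $(\C,A)$ is unirational and fix a proper differential rational parametrization $\mathcal P(u)=(P_1/Q_1,P_2/Q_2)$. By Theorem \ref{th-order}, $\ord(P_1/Q_1)=\ord_yA=0$ and $\ord(P_2/Q_2)=\ord_xA=0$, so both components lie in $\ff(u)$, i.e.\ they are ordinary rational functions. Applying the algebraic L\"uroth theorem I write $\ff(P_1/Q_1,P_2/Q_2)=\ff(g)$ for some $g\in\ff(u)$; since each component lies in $\ff(g)$ and $g\in\ff(P_1/Q_1,P_2/Q_2)$, passing to differential closures gives $\ff\langle\mathcal P(u)\rangle=\ff\langle g\rangle$, which equals $\ff\langle u\rangle$ by properness. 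The crux is then to prove $[\ff\langle u\rangle:\ff\langle g\rangle]=\deg_u g$, the degree of $g$ as a rational map, so that $\ff\langle g\rangle=\ff\langle u\rangle$ forces $\deg_u g=1$ and hence $\ff(g)=\ff(u)$. Granting this, $\mathcal P(u)$ is a proper rational parametrization of the algebraic curve $A=0$ (it satisfies $A(\mathcal P(u))=0$ because $A\in\sat(A)=\mathbb I(\mathcal P(u))$ and $A$ has order $0$), whence that curve has genus $0$.

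For the converse, assume the algebraic curve has genus $0$. Since $\ff$ is algebraically closed, \cite[Theorem 4.63]{Winkler} provides a proper rational parametrization $(r_1,r_2)\in\ff(u)^2$ with $\ff(r_1,r_2)=\ff(u)$. I would then verify that $(r_1,r_2)$ is a generic point of $\sat(A)$: from $A(r_1,r_2)=0$ and the identity $A^{(k)}(r_1,r_2)=\delta^k\big(A(r_1,r_2)\big)=0$ every element of $[A]$ vanishes at $(r_1,r_2)$, and since the separant $\S_A=\partial A/\partial y$ satisfies $\S_A(r_1,r_2)\neq0$ (otherwise $A\mid\S_A$ in $\ff[x,y]$, impossible as $0\le\deg_y\S_A<\deg_yA$), the defining relation $\S_A^{m}f\in[A]$ of $\sat(A)=[A]:\S_A^{\infty}$ gives $f(r_1,r_2)=0$ for every $f\in\sat(A)$; thus $\sat(A)\subseteq\mathbb I(r_1,r_2)$. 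Equality, and hence that $(r_1,r_2)$ is a differential rational parametrization of $\C$, follows by checking that these two prime differential ideals have the same dimension and relative order, e.g.\ by comparing the transcendence degrees of $\ff(r_1,\dots,r_1^{(n)},r_2,\dots,r_2^{(n)})$ and of the corresponding subfield of $\ff\langle\C\rangle$ at each level $n$, both equal to $n+1$. Finally $\ff(r_1,r_2)=\ff(u)$ yields $\ff\langle r_1,r_2\rangle=\ff\langle u\rangle$, so the parametrization is proper and $(\C,A)$ is unirational.

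The main obstacle is the degree identity $[\ff\langle u\rangle:\ff\langle g\rangle]=\deg_u g$ for $g\in\ff(u)$, which is precisely what links differential and algebraic properness (and explains the non-example $g=u^2$ of Remark \ref{rk-order}). I expect to establish it from the structural description in Lemma \ref{lm-ord}: writing $g=N(u)/M(u)$, the element $u$ is a root of $M(U)g-N(U)\in\ff(g)[U]$ of degree $\deg_u g$, while each derivative $g^{(k)}$ is linear in $u^{(k)}$ with coefficients in $\ff(u)$, so that $\ff\langle g\rangle=\ff(g)(g',g'',\dots)$ is purely transcendental over $\ff(g)$ and $\ff\langle u\rangle=\ff\langle g\rangle(u)$; since irreducibility of $M(U)g-N(U)$ is preserved under purely transcendental extensions, the degree of $u$ over $\ff\langle g\rangle$ remains $\deg_u g$. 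A secondary technical point is the upgrade of the containment $\sat(A)\subseteq\mathbb I(r_1,r_2)$ to equality in the converse, for which the level-by-level transcendence-degree count is the safest route.
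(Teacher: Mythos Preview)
Your argument is correct, but it does considerably more work than the paper's proof and misidentifies where the difficulty lies.

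For the forward direction, the paper stops immediately after invoking Theorem~\ref{th-order}: once $\ord(P_i/Q_i)\le 0$ you have $\mathcal P(u)\in\ff(u)^2\setminus\ff^2$ with $A(\mathcal P(u))=0$, and that alone already exhibits a (not necessarily proper) rational parametrization of the algebraic curve $A=0$, whence genus $0$ by \cite[Theorem~4.63]{Winkler}. Your entire L\"uroth-generator discussion and the degree identity $[\ff\langle u\rangle:\ff\langle g\rangle]=\deg_u g$ are aimed at showing that $\mathcal P(u)$ is moreover a \emph{proper} algebraic parametrization; this is true (and your sketch via purely transcendental extension of $\ff(g)$ is sound), but it is not needed for the corollary. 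So what you call the ``main obstacle'' is in fact a detour.

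For the converse, the paper simply regards it as obvious that a rational (algebraic) parametrization of $A=0$ is a differential rational parametrization of $(\C,A)$. You supply the verification the paper omits: that a generic point $(r_1,r_2)$ of the algebraic curve satisfies $\sat(A)\subseteq\mathbb I(r_1,r_2)$ (clear from $\S_A(r_1,r_2)\neq0$) and that equality holds. Your proposed level-by-level transcendence-degree comparison works: both prime differential ideals have Kolchin polynomial $n+1$, and for prime differential ideals $P\subseteq Q$ equality of dimension polynomials forces $P=Q$ (since $P\cap R_n\subseteq Q\cap R_n$ are then primes of equal height in each truncated polynomial ring $R_n$). This is a genuine, if routine, addition to the paper's terse treatment.

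In summary: correct, with the forward direction unnecessarily strengthened and the converse more carefully justified than in the paper.
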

\proof By \cite[Theorem 4.63]{Winkler}, an algebraic curve is rational if and only if its genus is 0.
So it suffices to show that if the differential curve $(\C,A)$ is unirational, 
then the algebraic curve defined by $A=0$ is rational in the algebraic sense. Indeed, if $(\C,A)$ is unirational with a proper differential rational parametrization $(\frac{P_1(u)}{Q_1(u)},\frac{P_2(u)}{Q_2(u)})$, by Theorem  \ref{th-order},  {$\ord(\frac{P_i(u)}{Q_i(u)})\leq0$}, which implies that the algebraic curve $A=0$ is rational.
\qed

\vskip5pt
Even if a unirational differential curve can have infinitely many proper differential rational parametrizations, 
 the following theorem shows that proper differential rational parametrizations enjoy some ``uniqueness" property up to  the so-called M\"{o}bius transformations.

\begin{theorem} \label{th-mobius}
If \,$\mathcal P_1(u), \mathcal P_2(u)$ are two proper differential rational parametrizations of $(\C,A)$, then there exist $a,b,c,d\in \ff$, s.t. $\mathcal P_2(u)= \mathcal P_1(\frac{au+b}{cu+d})$. Conversely, given any proper parametrization $\mathcal P(u)$ of $(\C,A)$, $\mathcal P(\frac{au+b}{cu+d})$ is also a proper parametrization of $(\C,A)$ for $ad-bc \neq 0$.
\end{theorem}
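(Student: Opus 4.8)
The plan is to push everything through the differential field isomorphisms attached to proper parametrizations and then invoke Lemma \ref{lm-mobuis}; the order results of Theorem \ref{th-order} are not logically needed here (they only furnish a consistency check via Corollary \ref{additive}).

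For the forward direction I would argue as follows. Write $\ff\langle\C\rangle=\ff\langle\bar x,\bar y\rangle$, where $\bar x,\bar y$ are the images of $x,y$. Because $\mathcal P_1$ and $\mathcal P_2$ are \emph{proper} parametrizations of the same curve, each induces a differential $\ff$-isomorphism $\varphi_i:\ff\langle\C\rangle\to\ff\langle u\rangle$, $f(x,y)\mapsto f(\mathcal P_i(u))$ (this is the map of the earlier Proposition, now onto $\ff\langle u\rangle$ precisely by properness). I then form the composite differential $\ff$-automorphism $\theta=\varphi_2\circ\varphi_1^{-1}$ of $\ff\langle u\rangle$ and set $R(u):=\theta(u)\in\ff\langle u\rangle$. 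The crucial computation is that, since $\theta$ fixes $\ff$ pointwise and commutes with $\delta$, applying $\theta$ to any differential rational expression $g(u)$ is the same as substituting $R(u)$ for $u$, i.e. $\theta(g(u))=g(R(u))$. Applying this to the two coordinate functions gives
$$\mathcal P_2(u)=\big(\varphi_2(\bar x),\varphi_2(\bar y)\big)=\big(\theta(\varphi_1(\bar x)),\theta(\varphi_1(\bar y))\big)=\big(\varphi_1(\bar x),\varphi_1(\bar y)\big)\big|_{u\to R(u)}=\mathcal P_1(R(u)).$$

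It remains to identify $R(u)$ as a M\"obius transformation. Since $\theta$ is an automorphism of $\ff\langle u\rangle$, we have $\ff\langle R(u)\rangle=\theta(\ff\langle u\rangle)=\ff\langle u\rangle$ and $R(u)$ is differentially transcendental over $\ff$. Lemma \ref{lm-mobuis}, applied to the differentially transcendental pair $u,R(u)$ with $\ff\langle u\rangle=\ff\langle R(u)\rangle$, yields $a,b,c,d\in\ff$ with $ad-bc\neq0$ and $R(u)=\frac{au+b}{cu+d}$. Combined with the displayed identity this gives $\mathcal P_2(u)=\mathcal P_1(\frac{au+b}{cu+d})$, as desired.

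For the converse, given a proper $\mathcal P(u)$ and $ad-bc\neq0$, I set $v=\frac{au+b}{cu+d}$ and first verify $v$ is a legitimate new parameter: solving the M\"obius relation $u(cv-a)=b-dv$ shows $cv-a\neq0$ (otherwise $ad=bc$) and hence $u=\frac{b-dv}{cv-a}\in\ff\langle v\rangle$, so $\ff\langle v\rangle=\ff\langle u\rangle$ and $v$ is differentially transcendental over $\ff$. Therefore there is a differential $\ff$-automorphism $\sigma$ of $\ff\langle u\rangle$ with $\sigma(u)=v$, and $\mathcal P(v)=\sigma(\mathcal P(u))$ componentwise. Since $\sigma$ is an $\ff$-isomorphism and each element of $\sat(A)$ has coefficients in $\ff$, we get $\mathbb I(\mathcal P(v))=\mathbb I(\mathcal P(u))=\sat(A)$, so $\mathcal P(v)$ parametrizes $\C$; moreover $\ff\langle\mathcal P(v)\rangle=\sigma(\ff\langle\mathcal P(u)\rangle)=\sigma(\ff\langle u\rangle)=\ff\langle u\rangle$, so $\mathcal P(v)$ is proper.

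The one genuinely nontrivial point, and the step I would be most careful about, is the bookkeeping identity $\theta(g(u))=g(R(u))$, namely that applying the abstract differential automorphism $\theta$ coincides with the concrete reparametrization $u\mapsto R(u)$. This rests only on $\theta$ being $\ff$-linear and commuting with $\delta$, but it is exactly what transfers the field-theoretic equality $\ff\langle R(u)\rangle=\ff\langle u\rangle$ into the coordinate statement $\mathcal P_2=\mathcal P_1\circ R$; once it is in place, the entire argument collapses onto Lemma \ref{lm-mobuis}.
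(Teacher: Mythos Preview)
Your argument is correct and follows essentially the same route as the paper: both construct the reparametrizing element (your $R(u)=\theta(u)$ coincides with the paper's $r(u)=U(\mathcal P_2(u))$, where $U=M/N$ is the explicit inversion of $\mathcal P_1$), verify $\ff\langle R(u)\rangle=\ff\langle u\rangle$, and then invoke Lemma~\ref{lm-mobuis}. The only difference is packaging---you phrase everything via the abstract automorphism $\theta=\varphi_2\circ\varphi_1^{-1}$ of $\ff\langle u\rangle$, whereas the paper writes out the inversion map concretely---but the underlying computation is identical.
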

\begin{proof} Assume $\mathcal P_1(u)=\big( \frac{P_1(u)}{Q_1(u)}, \,\frac{P_2(u)}{Q_2(u)}\big), \mathcal P_2(u)=\big(\frac{P_3(u)}{Q_3(u)}, \frac{P_4(u)}{Q_4(u)}\big)$. 
Since $\ff\langle \frac{P_1(u)}{Q_1(u)}, \frac{P_2(u)}{Q_2(u)} \rangle=\ff\langle u \rangle$, 
there exist $M(x, y), N(x, y) \in \ff\{x,y\}$ s.t. $u= \frac{M(\mathcal P_1(u))}{N(\mathcal P_1(u))}$. Then,
\begin{center}$\frac{P_i\big(\frac {M(\mathcal P_1(u))}{N(\mathcal P_1(u))}\big)}{Q_i\big(\frac {M(\mathcal P_1(u))}{N(\mathcal P_1(u))}\big)}=\frac{P_i(u)}{Q_i(u)}, \,i = 1, 2.$\end{center}
Let $r(u)= \frac{M(\mathcal P_2(u))}{N(\mathcal P_2(u))}$. Since $\mathbb I\left(\frac{P_1(u)}{Q_1(u)}, \frac{P_2(u)}{Q_2(u)}\right)= \mathbb I\left(\frac{ P_3(u)}{Q_3(u)}, \frac{P_4(u)}{Q_4(u)}\right)$, we obtain $\frac{ P_3(u)}{Q_3(u)}=\frac{P_1(r(u))}{Q_1(r(u))}$, $\frac{ P_4(u)}{Q_4(u)}=\frac{P_2(r(u))}{Q_2(r(u))}$. Then $\ff\langle u \rangle =\ff\langle \frac{ P_3(u)}{Q_3(u)}, \frac{ P_4(u)}{Q_4(u)}\rangle \subset \ff \langle r(u) \rangle$, which implies $\ff\langle u \rangle = \ff\langle r(u) \rangle$. 
By Lemma \ref{lm-mobuis}, $r(u)=\frac{au+b}{cu+d}$ for some $a,b,c,d\in \ff$ with $ad-bc \neq 0$. 
The converse part is easy to check.
\end{proof}
{  \begin{remark}Let $(\C,A)$ be a unirational differential curve.
Theorem \ref{th-mobius} and its proof imply the following two facts about  proper differential rational parametrizations.
\begin{itemize}
\item[1).] Proper differential rational parametrizations of $(\C,A)$ are of the smallest order among all its rational parametrizations. 
%This could be easily shown by modifying the proof of Theorem \ref{th-mobius}.
Indeed, let $\mathcal P(u)$ be any given differential rational parametrization  of $(\C,A)$ and $\mathcal P_1(u)$ a proper one, then by the proof of Theorem \ref{th-mobius}, there  exists $r(u) \in \ff \langle u \rangle$ such that $\mathcal P(u)=\mathcal P_1(r(u))$. Therefore, by corollary \ref{additive}, $\ord(\mathcal P_1(u))\leq\ord (\mathcal P(u))$.

\item[2).]  Although the orders of proper differential rational parametrizations of  $(\C,A)$ are the same, 
their degrees can be distinct when $\ord(A)>0$.  
Take $A=y''-x$ for a simple example.
Clearly, $\mathcal P(u)=(u'', u)$ is a proper rational parametrization of  $(\C,A)$ of degree 1 and 
$\mathcal P(1/u)=(\frac{-uu''+2(u')^2}{u^3}, \frac{1}{u})$ is another proper parametrization of degree 3.
%\footnote{The degree of $\mathcal P(u)=(P_1/Q_1,P_2/Q_2)$ is defined as $\max_i\{\deg(P_i),\deg(Q_i)\}$.}  
It is interesting to estimate the  lowest degree of proper parametrizations   in terms of the numerical data of $A$.\end{itemize}
\end{remark}
}

We illustrate Theorem \ref{th-order} and Theorem \ref{th-mobius} by giving the following examples.
\begin{example}
\begin{itemize}
\item[(1)]
Let $A=y''x + (y')^2y - y'x' \in \Q \{x,y\}$.
Then $\mathcal P(u)=(uu'',u')$ is a proper parametrization of $(\C,A)$. 
Note that $\ord(uu'')=2=\ord_{y}A$ and $\ord(u')=1=\ord_{x}A$. 
\item[(2)]
Let $A=y'-x'-x\in\mathbb Q\{x,y\}$.  Then $\mathcal P_1=(u',u+u'), \mathcal P_2=(\frac{-u'}{u^{2}},\frac{u-u'}{u^{2}})$  are two proper parametrizations of $(\C,A)$.  Here, $\mathcal P_2(u)=\mathcal P_1(\frac{1}{u})$. Note that $\ord(\mathcal P_i)=\ord(A)$ and $\deg(\mathcal P_2)>\deg(\mathcal P_1)$.
\end{itemize}
\end{example}

\section{Proper linear differential rational parametrizations and the implicitization problem}

In this section, we first explore further properties for proper linear differential rational parametrizations 
and then study the corresponding implicitization problem by using  differential resultants.

\begin{definition} $\mathcal P(u)=\big(\frac{P_1(u)}{Q_1(u)}, \frac{P_2(u)}{Q_2(u)}\big)\in\ff\langle u\rangle^2\backslash\ff^2$ is called a {\rm linear differential rational parametrization} if for $i=1,2$, $P_i, Q_i\in \ff\{u\}$ are of degree at most 1 and  $\gcd(P_i,Q_i)=1$.
\end{definition}

Although the converse of Theorem \ref{th-order} is not valid in general as explained in Remark \ref{rk-order},
when restricted to linear differential rational parametrizations,
the next theorem shows that properness can be characterized via the orders of implicit equations of unirational differential curves.

\begin{theorem}\label{proper-rational}
Let $(\C,A)$ be a unirational differential curve which has a  linear differential rational parametrization $\mathcal P(u)=\big(\frac{P_1(u)}{Q_1(u)}, \frac{P_2(u)}{Q_2(u)}\big)$. Then,
$\mathcal P(u)$ is proper if and only if $$\ord\big( \frac{P_1(u)}{Q_1(u)}\big)= \ord_yA, \, \ord\big(\frac{P_2(u)}{Q_2(u)}\big)= \ord_xA.$$
\end{theorem}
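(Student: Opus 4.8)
The forward direction (properness $\Rightarrow$ the order equalities) is already furnished by Theorem \ref{th-order}, which holds for arbitrary proper differential rational parametrizations and in particular for linear ones. So the real content is the converse: assuming $\mathcal P(u)=\big(\frac{P_1(u)}{Q_1(u)},\frac{P_2(u)}{Q_2(u)}\big)$ is linear and satisfies $\ord(P_1/Q_1)=\ord_yA$ and $\ord(P_2/Q_2)=\ord_xA$, I must show $\ff\langle\frac{P_1}{Q_1},\frac{P_2}{Q_2}\rangle=\ff\langle u\rangle$. My plan is to argue by contradiction. Suppose $\mathcal P(u)$ is not proper. By Gao's constructive re-parametrization (the proof of Theorem \ref{th-mobius}, via the differential L\"uroth theorem), there is a proper parametrization $\mathcal P_1(u)$ of $(\C,A)$ and some $R(u)\in\ff\langle u\rangle\setminus\ff$ with $\mathcal P(u)=\mathcal P_1(R(u))$, and since $\mathcal P$ is improper we may take $\ord_u R\geq 1$ (if $\ord_u R=0$, then $R$ is a M\"obius transformation by Lemma \ref{lm-mobuis} and $\mathcal P$ would already be proper).

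The key step is to apply Corollary \ref{additive} componentwise. Writing $\mathcal P_1(u)=\big(\frac{\widetilde P_1}{\widetilde Q_1},\frac{\widetilde P_2}{\widetilde Q_2}\big)$, the additivity of order under composition gives, for each $i$ with $\widetilde P_i/\widetilde Q_i\notin\ff$,
\begin{equation}
\ord_u\Big(\frac{P_i(u)}{Q_i(u)}\Big)=\ord_u\Big(\frac{\widetilde P_i}{\widetilde Q_i}\Big)+\ord_u R.
\end{equation}
But by Theorem \ref{th-order} applied to the proper $\mathcal P_1$, we have $\ord(\widetilde P_1/\widetilde Q_1)=\ord_yA$ and $\ord(\widetilde P_2/\widetilde Q_2)=\ord_xA$. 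Combined with the hypothesis $\ord(P_i/Q_i)=\ord_yA$ (resp.\ $\ord_xA$), this forces $\ord_u R=0$ on any component that is nonconstant, contradicting $\ord_u R\geq 1$. The only loophole is if \emph{both} nonconstant components of $\mathcal P$ happen to sit over a constant component of $\mathcal P_1$; but Theorem \ref{th-order} (together with the degenerate cases handled there via Lemma \ref{lm-mobuis}) shows a proper parametrization has at most one constant component, and in that degenerate situation the order equalities pin down $A$ to be $x-a_1$ or $y-a_2$, where the claim is immediate. This would close the argument, and I expect it to be the clean route, except that it \emph{never uses linearity} of $\mathcal P$, which tells me the intended proof must be different.

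The point of the linearity hypothesis, and where I expect the genuine obstacle to lie, is that the above ``contradiction'' actually collapses: Corollary \ref{additive} already shows $\ord(\mathcal P)\geq\ord(\mathcal P_1)$ always, with equality exactly when $\ord_u R=0$, so the order equalities of Theorem \ref{th-order} do characterize properness for \emph{every} parametrization attaining the minimal order $\ord(A)$ — and Remark \ref{rk-order}'s counterexample $\mathcal P(u)=(\frac{2u'}{u},u^2)$ shows this is false in general. The resolution is that in Remark \ref{rk-order}, $R(u)=u^2$ has $\ord_u R=0$ yet is not M\"obius, so Corollary \ref{additive} is consistent and the order equalities genuinely fail to detect improperness. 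Hence the correct strategy under linearity is not the composition argument but a direct one: I would express $u$ (or a generating element) explicitly. Because each $P_i,Q_i$ is affine in the jet $(u,u',\dots)$, the relations $x_i(\mathcal P)=P_i/Q_i$ are linear-fractional in the highest-order jet variables $u^{(m_1)},u^{(m_2)}$ where $m_i=\ord(P_i/Q_i)$. Using the order matching $m_1=\ord_yA$, $m_2=\ord_xA$, I would solve these linear-fractional relations for $u^{(m_1)}$ and $u^{(m_2)}$ over $\ff\langle\frac{P_1}{Q_1},\frac{P_2}{Q_2}\rangle$, then descend by differentiation/substitution to recover $u$ itself, thereby proving $\ff\langle\frac{P_1}{Q_1},\frac{P_2}{Q_2}\rangle=\ff\langle u\rangle$ directly. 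The hard part will be controlling the elimination of the intermediate jet variables $u',\dots,u^{(m_i-1)}$ and verifying that linearity makes the resulting system invertible over the subfield — this is precisely the step that fails for the degree-$2$ map $u\mapsto u^2$ and must crucially exploit that $\deg_u P_i,\deg_u Q_i\leq 1$.
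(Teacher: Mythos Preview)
Your first attempt---the composition/contradiction argument via L\"uroth and Corollary \ref{additive}---is in fact the paper's approach, and you abandoned it too soon. The flaw you correctly diagnose is only in your justification of ``$\ord_u R\geq 1$'': you try to derive it from Lemma \ref{lm-mobuis}, which indeed fails (e.g.\ $R=u^2$). But the paper does \emph{not} use Lemma \ref{lm-mobuis} here. Instead it argues as follows. Form the prime differential ideal $J=[Q_1x-P_1,\,Q_2y-P_2]:(Q_1Q_2)^\infty$ and compute its characteristic set $B_1(x,y),\,B_2(x,y,u)$ under $x<y<u$. Because the $P_i,Q_i$ have degree $\leq 1$, the element $B_2$ is \emph{linear} in $u,u',\dots,u^{(s)}$, where $s=\ord_u B_2$. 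Non-properness means exactly $s\geq 1$. Now substitute $(x,y)=\mathcal P(u)$ and normalize to get
\[
\bar B(z)=z^{(s)}+c_{s-1}z^{(s-1)}+\cdots+c_0 z+c\in K\{z\},\qquad K=\ff\Big\langle\tfrac{P_1}{Q_1},\tfrac{P_2}{Q_2}\Big\rangle,
\]
with $\bar B(u)=0$. If every $c_j$ and $c$ had $\ord_u\leq 0$, the relation $u^{(s)}=-\sum c_j u^{(j)}-c$ would express $u^{(s)}$ as an element of order $\leq s-1$, which is impossible since $s\geq1$. Hence some coefficient $k_0$ has $\ord_u k_0\geq 1$; by Kolchin's constructive L\"uroth theorem, any non-constant coefficient of $\bar B$ generates $K$, so $K=\ff\langle k_0\rangle$. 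Taking $R=k_0$ in your notation, Corollary \ref{additive} then gives
\[
\ord\Big(\tfrac{\widetilde P_i}{\widetilde Q_i}\Big)=\ord\Big(\tfrac{P_i}{Q_i}\Big)-\ord_u k_0\leq m_i-1,
\]
contradicting Theorem \ref{th-order} applied to the proper $\mathcal P_1$. This is precisely the contradiction you were after; the linearity hypothesis enters only to make $\bar B$ linear, which is what forces $\ord_u k_0\geq 1$.

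Your second, ``direct'' route (solve the linear-fractional system for the jets and descend to $u$) is not what the paper does, and as you acknowledge, the elimination step controlling $u,\dots,u^{(m_i-1)}$ is left entirely unresolved. Without showing why the order hypotheses $m_1=\ord_yA,\,m_2=\ord_xA$ make that system uniquely solvable over $K$, this is only a plan, not a proof. The cleaner path is to return to your first argument and supply the missing $\ord_u k_0\geq 1$ step as above.
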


\begin{proof}
Suppose $\ord\big( \frac{P_1(u)}{Q_1(u)}\big)= \ord_yA, \, \ord\big(\frac{P_2(u)}{Q_2(u)}\big)= \ord_xA$.
We need to show that $\mathcal P(u)$ is  proper.
Let $J=[Q_1(u)x-P_1(u), Q_2(u)y-P_2(u)]\colon(Q_1Q_2)^{\infty}$.
Then $J$ is a prime differential ideal in $\ff\{x,y,u\}$, and $\{Q_1(u)x-P_1(u), Q_2(u)y-P_2(u)\}$ is its characteristic set w.r.t. the elimination ranking $u<x<y$ \cite[p.107]{Ritt}. 
Now we compute a characteristic set $B_1(x,y), B_2(x,y,u)$ of $J$ w.r.t. the elimination ranking $x<y<u$. 
Since $P_i, Q_i$ are of degree at most 1, by the zero-decomposition theorem, $B_2(x,y,u)$ is a linear differential polynomial in $u$.

If  $\mathcal P(u)$ is not proper, then $s=\ord_uB_2(x,y,u)\geq1$. 
Rewrite $B_2$ in the form $B_2(x,y,u)=I_s(x,y)u^{(s)}+\cdots+I_0(x,y)u+I(x,y)$, where $I_i(x,y), I(x,y) \in \ff\{x,y\}$. 
Let $\bar B(z)=\frac{B_2(\frac{P_1(u)}{Q_1(u)},\frac{P_2(u)}{Q_2(u)},z)}{I_s(\frac{P_1(u)}{Q_1(u)},\frac{P_2(u)}{Q_2(u)})}\in \ff\langle \frac{P_1(u)}{Q_1(u)},\frac{P_2(u)}{Q_2(u)}\rangle\{u\}$, 
then there exists a coefficient $k_0$ of $\bar B(z)$ such that $\ord_uk_0\geq1$.  
By the proof of the differential L\"uroth theorem (\cite{Kolchin1947}), we obtain $\ff \langle \frac{P_1(u)}{Q_1(u)},\frac{P_2(u)}{Q_2(u)} \rangle =\ff \langle k_0 \rangle$. Thus, there exist $P_3(u), Q_3(u),P_4(u), Q_4(u) \in \ff\{u\}$ such that $\frac{P_3(k_0)}{Q_3(k_0)}=\frac{P_1(u)}{Q_1(u)}, \frac{P_4(k_0)}{Q_4(k_0)}=\frac{P_2(u)}{Q_2(u)}$, and we have $(\frac{P_3(u)}{Q_3(u)},\frac{P_4(u)}{Q_4(u)})$ is a proper parametrization of $(\C,A)$ with $\ord(\frac{P_3(u)}{Q_3(u)}) \leq \ord_yA-1, \ord(\frac{P_4(u)}{Q_4(u)}) \leq \ord_xA-1$, 
which contradicts theorem \ref{th-order}. 
Thus, $\mathcal P(u)$ should be proper.
Combined with Theorem \ref{th-order}, $\mathcal P(u)$ is  proper if and only if $\ord\big( \frac{P_1(u)}{Q_1(u)}\big)= \ord_yA, \, \ord\big(\frac{P_2(u)}{Q_2(u)}\big)= \ord_xA.$
\end{proof}

\vskip3pt
In \cite{Rueda}, the implicitization problem for linear differential polynomial parametric  equations was studied via linear complete differential resultants. 
In the following, we present results on implicitization for  linear differential rational parametrizations with the method of differential resultants.
Before that, we need a technical result.

Recall that the wronskian determinant of $\xi_1,\ldots,\xi_n$ is 
\[ \text{wr}(\xi_1,\ldots,\xi_n)=\left|\begin{matrix} 
\xi_1&\delta(\xi_1)&\cdots&\delta^{n-1}(\xi_1)\\
\xi_2&\delta(\xi_2)&\cdots&\delta^{n-1}(\xi_2)\\
\cdots &\cdots&\cdots&\cdots\\
\xi_n&\delta(\xi_n)&\cdots&\delta^{n-1}(\xi_n)
\end{matrix}\right|.\]
It is well-known that $\text{wr}(\xi_1,\ldots,\xi_n)=0$ gives a necessary and sufficient condition that $\xi_1,\ldots,\xi_n$ are linearly dependent over constants.

\begin{lemma} \label{lm-rightdivisor}
Let $L=\delta^n+a_{n-1}\delta^{n-1}+\cdots+a_0\in \ff[\delta]$.
Suppose $L_1=\delta^{n_1}+b_{n_1-1}\delta^{n_1-1}+\cdots+b_0\in  \mathcal G[\delta]$ is a right divisor of $L$ over some differential extension field  $\mathcal G$ of $\ff$.
Then  all the $b_i$ belong to a finite differential algebraic extension field of $\ff$.
In particular, $\text{\rm tr.deg}\,\ff\langle b_0,\ldots,b_{n_1-1}\rangle/\ff<\infty.$
\end{lemma}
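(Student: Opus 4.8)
The plan is to realize each coefficient $b_i$ as a ratio of Wronskian-type determinants in a fundamental system of solutions of $L_1$, and then to push all the ``new'' transcendentals into \emph{constants}, which are automatically differentially algebraic over $\ff$. Throughout I work inside the universal field $\ee$, embedding $\mathcal G$ into $\ee$. Since adjoining finitely many differentially algebraic elements to $\ff$ produces an extension of finite transcendence degree, it suffices to show that each $b_i$ is differentially algebraic over $\ff$. Choose a fundamental system $y_1,\ldots,y_{n_1}$ of solutions of $L_1=0$ in $\ee$; because $L_1$ is a right divisor of $L$, these are also solutions of $L=0$. As $L_1$ is the unique monic operator of order $n_1$ annihilating $y_1,\ldots,y_{n_1}$, we have $L_1(y)=\text{wr}(y_1,\ldots,y_{n_1},y)/\text{wr}(y_1,\ldots,y_{n_1})$; expanding this determinant along the entries $y,y',\ldots,y^{(n_1)}$ exhibits each $b_i$ as $\pm$ the ratio of an $n_1\times n_1$ minor in the $y_j^{(k)}$ to $\text{wr}(y_1,\ldots,y_{n_1})$. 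In particular $b_i\in\ff\langle y_1,\ldots,y_{n_1}\rangle$.

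Next I would bound the solution field of $L$ itself. Fix any fundamental system $z_1,\ldots,z_n$ of $L=0$ in $\ee$ and set $E=\ff\langle z_1,\ldots,z_n\rangle$. Since $Lz_k=0$ with all coefficients $a_j\in\ff$, the relation $z_k^{(n)}=-\sum_{j=0}^{n-1}a_j z_k^{(j)}$, together with its derivatives, expresses every $z_k^{(\ell)}$ with $\ell\geq n$ as an $\ff$-linear combination of $z_k,z_k',\ldots,z_k^{(n-1)}$. Hence $E=\ff\big(z_k^{(\ell)}:1\leq k\leq n,\ 0\leq\ell\leq n-1\big)$ is generated over $\ff$ by $n^2$ elements, so $\trdeg(E/\ff)\leq n^2<\infty$. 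This elementary observation lets me avoid any appeal to Picard--Vessiot theory.

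Finally I would relate the two fundamental systems. Since $z_1,\ldots,z_n$ is a basis of the solution space of $L$ over the field of constants $C_\ee$ of $\ee$, write $y_j=\sum_{k=1}^n\lambda_{jk}z_k$ with $\lambda_{jk}\in C_\ee$; as the $\lambda_{jk}$ are constants, $y_j^{(\ell)}=\sum_k\lambda_{jk}z_k^{(\ell)}$ for all $\ell$. Applying the Cauchy--Binet formula to each determinant produced in the first step, both the numerator minor and the denominator $\text{wr}(y_1,\ldots,y_{n_1})$ become $C_\ee$-linear combinations $\sum_K M_K W_{i,K}$ and $\sum_K M_K\,\text{wr}_K$, where $K$ runs over the $n_1$-element subsets of $\{1,\ldots,n\}$, the minors $W_{i,K},\text{wr}_K$ lie in $E$, and $M_K=\det(\lambda_{jk})_{k\in K}\in C_\ee$. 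The crucial point is that the constant coefficients $M_K$ are identical in numerator and denominator, depending only on $(\lambda_{jk})$ and $K$ rather than on $i$, so that $b_i\in E(M_K:K)$. There are only $\binom{n}{n_1}$ of the $M_K$, and each, being a constant, satisfies $M_K'=0$ and is therefore differentially algebraic over $\ff$; consequently $E(M_K:K)$ is a differential field with $\trdeg\big(E(M_K:K)/\ff\big)\leq n^2+\binom{n}{n_1}<\infty$, hence differentially algebraic over $\ff$. All the $b_i$ lie in this finite differential algebraic extension of $\ff$, which gives the conclusion. The one delicate step is the Cauchy--Binet bookkeeping that forces a common family of constant coefficients in numerator and denominator; once that is established, the finiteness follows immediately from the elementary bound on $\trdeg(E/\ff)$ and the fact that constants are differentially algebraic.
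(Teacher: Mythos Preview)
Your proof is correct and follows essentially the same route as the paper: express the $b_i$ as Wronskian ratios in a fundamental system of $L_1$, write that system as constant-linear combinations of a fundamental system $\xi_1,\ldots,\xi_n$ of $L$, and conclude that the $b_i$ lie in the differentially algebraic extension of $\ff$ generated by the $\xi_k$ together with finitely many constants. The paper simply observes $b_j\in\ff(c_{ij})\langle\xi_1,\ldots,\xi_n\rangle$ without the Cauchy--Binet detour, which is a shorter way to reach the same conclusion.
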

\proof 
$\text{Sol}(L)=\{y\in \mathcal E \mid L(y)=0\}$ is a linear space of dimension $n$ over $C_\mathcal E$, the field of constants of $\mathcal E$.
Let $\xi_1,\ldots,\xi_n$ be a basis of $\text{Sol}(L)$. Then the $a_{n-1}, \ldots,a_{1},a_0$ satisfy the following linear equations
$$\delta^n(\xi_i)+a_{n-1}\delta^{n-1}(\xi_i)+\cdots+a_0\xi_i=0\,(i=1,\ldots,n).$$
So we have   \[ \left(\begin{matrix} 
\xi_1&\delta(\xi_1)&\cdots&\delta^{n-1}(\xi_1)\\
\xi_2&\delta(\xi_2)&\cdots&\delta^{n-1}(\xi_2)\\
\cdots &\cdots&\cdots&\cdots\\
\xi_n&\delta(\xi_n)&\cdots&\delta^{n-1}(\xi_n)
\end{matrix}\right)\left(\begin{matrix} 
a_0\\
a_1\\
\vdots\\
a_{n-1}
\end{matrix}\right)
=\left(\begin{matrix} 
-\delta^{n}(\xi_1)\\
-\delta^{n}(\xi_2)\\
\vdots\\
-\delta^{n}(\xi_n)
\end{matrix}\right). \]
Since $\xi_1,\ldots,\xi_n$ are linearly independent over $C_\mathcal{E}$, the wronskian determinant $\text{wr}(\xi_1,\ldots,\xi_n)$ is nonzero.
Thus, $a_i=\frac{\text{wr}_i(\xi_1,\ldots,\xi_n)}{\text{wr}(\xi_1,\ldots,\xi_n)}\in \ff$, where $\text{wr}_i(\xi_1,\ldots,\xi_n)$  is obtained from $\text{wr}(\xi_1,\ldots,\xi_n)$ by replacing its  $(i+1)$-th column by $(-\delta^{n}(\xi_1)\,\, -\delta^{n}(\xi_2)\,\,\cdots\,\,-\delta^{n}(\xi_n))^{\text{T}}$. 

Since $L_1$ is a right divisor of $L$, the solution space $\text{Sol}(L_1)$ $(\subset\text{Sol}(L))$ of $L_1$ in $\mathcal E$ is of dimension $n_1$ 
and there exist $c_{ij}\in C_{\mathcal E}\,(i=1,\ldots,n_1;j=1,\ldots,n)$ such that 
$$\eta_i=c_{i1}\xi_1+c_{i2}\xi_2+\cdots+c_{in}\xi_n,\,i=1,\ldots,n_1$$ is a  basis of   $\text{Sol}(L_1)$. 
Similarly, we can recover the coefficients $b_j$ of $L_1$ from the $\eta_i$'s following the above steps.
Thus, $b_j\in \ff(c_{ij})\langle\xi_1,\ldots,\xi_n\rangle$, which is a finitely generated differential algebraic extension field of $\ff$. \qed

Differential resultant for two univariate differential polynomials  was first introduced by Ritt \cite{Ritt1932}.  
 Carr\`a-Ferro then proposed to use algebraic Macaulay resultants to compute differential resultants for $n+1$ differential polynomials in $n$ differential variables \cite{Ferro1997, Ferro}, which is incomplete in that under her method, even the differential resultant of two generic nonlinear univariate  differential polynomials is identically zero. 
 The first rigorous definition of the differential resultant for $n+1$ differential polynomials in $n$ differential variables was given in \cite{Gao2013}. 
Although Ferro's matrix formulae do not work for the general case, 
these definitions for the differential resultant of  two linear univariate differential polynomials are equivalent. 
Now we recall the definition of differential resultants for two linear univariate differential polynomials
 via the matrix formulae.
 
\begin{definition}\label{dresultant}
 Let $f_1, f_2 \in \mathbb D\{u\}$ be linear differential polynomials of order $m_1, m_2$ over a differential domain $\mathbb D$. 
Let $PS=\big\{f_1^{(m_2)}, f_1^{(m_2-1)},\cdots,f_1,f_2^{(m_1)},f_2^{(m_1-1)},\cdots,f_2\big\}$ and set  $L=m_1+m_2+2$.
Let $M$ be the $L\times L$ matrix whose $k$-th row is the coefficient vector of the $k$-th polynomial in $PS$ w.r.t.  $u^{(m_1+m_2)}>u^{(m_1+m_2-1)}>\cdots>u>1$ (i.e., the resultant matrix of $PS$ w.r.t. $u^{(j)}, j\leq m_1+m_2)$. 
This $M$ is called the {\rm differential resultant matrix} of $f_1$ and $f_2$ w.r.t. $u$, and $\det(M)$ is defined to be the {\rm differential resultant} of $f_1, f_2$,
denoted by $\delta\text{-}\Res(f_1,f_2)$.
\end{definition}

The following result shows that the differential resultant can be applied to compute the implicit equation for proper linear differential rational   parametric  equations.
\begin{theorem} \label{d-resultant}
Let  $\mathcal{P}(u)=(\frac{P_1(u)}{Q_1(u)},\frac{P_2(u)}{Q_2(u)})$ be a linear differential rational parametrization with 
$m_i=\ord(\frac{P_i}{Q_i}) \geq 0$ for $i= 1,2$. 
If $\mathcal{P}(u)$ is proper, then the differential resultant $$R(x,y):=\delta\text{-}\Res_u\big(xQ_1(u)-P_1(u),yQ_2(u)-P_2(u)\big) \neq 0.$$
Furthermore, $\ord_xR=m_2, \ord_yR=m_1$ and $R$ is  linear in $x^{(m_2)}$ and $y^{(m_1)}$.
\end{theorem}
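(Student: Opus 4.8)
Write $f_1=xQ_1(u)-P_1(u)$ and $f_2=yQ_2(u)-P_2(u)$; each $f_i$ is affine-linear in $u,\delta u,\ldots$ with coefficients in $\ff[x]$ (resp. $\ff[y]$). The plan is to treat $x,y$ as differential indeterminates, set $K=\ff\langle x,y\rangle$, and decompose $f_i=\mathcal L_i(u)+b_i$, where $\mathcal L_1=x\hat Q_1-\hat P_1\in\ff\langle x\rangle[\delta]$ and $\mathcal L_2=y\hat Q_2-\hat P_2\in\ff\langle y\rangle[\delta]$ are the homogeneous linear parts, of orders $m_1$ and $m_2$ respectively, and $b_1,b_2\in K$ are the $u$-free terms. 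Because the entries of $M$ are the coefficients of $f_1^{(k)},f_2^{(l)}$ with respect to the monomials $u^{(j)}$, they already lie in $\ff\{x,y\}$, so $R=\det M\in\ff\{x,y\}$ and carries no $u$.

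I would first settle the easy structural claims and membership. The derivative $x^{(m_2)}$ occurs among the rows of $M$ only in the row coming from $f_1^{(m_2)}$, and there it occurs linearly (as $f_1$ has degree one in $x$), while every other row involves only lower derivatives of $x$ or only $y$; multilinearity of the determinant then gives $\deg_{x^{(m_2)}}R\le1$ and $\ord_xR\le m_2$, and symmetrically $\deg_{y^{(m_1)}}R\le1$, $\ord_yR\le m_1$. Next I would show $R\in\sat(A)=\mathbb I(\mathcal P(u))$: by the standard resultant-in-the-ideal property, $R$ is an $\ff\{x,y,u\}$-combination of the polynomials in $PS$, and under the substitution $(x,y)=\mathcal P(u)$ every $f_i$, hence every $f_i^{(k)}$, vanishes identically in $u$; since $R$ is free of $u$, this gives $R(\mathcal P(u))=0$.

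The heart of the argument is $R\neq0$. The vanishing $\det M=0$ is equivalent to the existence of operators $\Lambda,\Xi\in K[\delta]$, not both zero, with $\ord\Lambda\le m_2$, $\ord\Xi\le m_1$, and $\Lambda(f_1)+\Xi(f_2)=0$ in $K\{u\}$; since $u$ is a differential indeterminate over $K$, this splits into the operator identity $\Lambda\mathcal L_1+\Xi\mathcal L_2=0$ and the scalar identity $\Lambda(b_1)+\Xi(b_2)=0$. I would first dispose of the generic situation $\Lambda\mathcal L_1=-\Xi\mathcal L_2\neq0$, which forces a common right divisor $D=\gcrd(\mathcal L_1,\mathcal L_2)$ of positive order over $K$. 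Applying Lemma \ref{lm-rightdivisor} (whose proof is valid over any differential base field) with base $\ff\langle x\rangle$ to the monic normalization of $\mathcal L_1$, and again with base $\ff\langle y\rangle$ for $\mathcal L_2$, each coefficient of $D$ is differentially algebraic over both $\ff\langle x\rangle$ and $\ff\langle y\rangle$; a differential-transcendence-degree count (an element of $\ff\langle x,y\rangle$ that is differentially algebraic over each of $\ff\langle x\rangle,\ff\langle y\rangle$ cannot be differentially transcendental over $\ff$, as that would make both $x$ and $y$ differentially algebraic over it and collapse the degree-two extension to degree one, and $\ff$ is differentially algebraically closed in $\ff\langle x,y\rangle$) forces $D\in\ff[\delta]$. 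Then $D$ right-divides each $\hat Q_i,\hat P_i$, so each $P_i/Q_i$ is a differential rational function of $w=D(u)$; since $\ord_uw=\ord D\ge1$, we get $\ff\langle\mathcal P(u)\rangle\subseteq\ff\langle w\rangle\subsetneq\ff\langle u\rangle$, contradicting properness. The remaining non-generic case $\ord\Lambda=m_2$, $\ord\Xi=m_1$ with $\mathcal L_1,\mathcal L_2$ right-coprime is precisely where the scalar identity $\Lambda(b_1)+\Xi(b_2)=0$ must intervene; I expect to exclude it by a leading-coefficient comparison in $\Lambda\mathcal L_1=-\Xi\mathcal L_2$ together with the reduced-form hypothesis $\gcd(P_i,Q_i)=1$, which prevents such a degeneracy unless a coordinate of $\mathcal P$ is constant. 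This descent — passing from a common right factor over the large field $K$ to one defined over $\ff$, and correctly accounting for the inhomogeneous terms $b_i$ — is the main obstacle.

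Finally, once $R\neq0$ is secured the exact orders follow cleanly. By Theorem \ref{proper-rational}, properness gives $\ord_xA=m_2$ and $\ord_yA=m_1$, so the bounds above yield $\ord R\le\ord A$; since $R\in\sat(A)$ and $R\neq0$, Lemma \ref{lm-satA} gives $A\mid R$, say $R=A\cdot G$. As $\ord_x(A\cdot G)\ge\ord_xA=m_2$ and $\ord_y(A\cdot G)\ge\ord_yA=m_1$, these meet the upper bounds, forcing $\ord_xR=m_2$ and $\ord_yR=m_1$; combined with $\deg_{x^{(m_2)}}R\le1$ and $\deg_{y^{(m_1)}}R\le1$, this shows $x^{(m_2)}$ and $y^{(m_1)}$ each appear to degree exactly one, i.e. $R$ is linear in $x^{(m_2)}$ and in $y^{(m_1)}$.
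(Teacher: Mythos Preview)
Your structural outline is sound: the upper bounds $\ord_xR\le m_2$, $\ord_yR\le m_1$ and degree-one bounds in $x^{(m_2)},y^{(m_1)}$ are correct, as is the final paragraph deducing exact orders from $R\neq0$, $R\in\sat(A)$, and Theorem~\ref{th-order}. Your Case~A argument (when $D=\gcrd(\mathcal L_1,\mathcal L_2)$ has positive order over $K$) also matches the paper's: Lemma~\ref{lm-rightdivisor} forces the coefficients of $D$ to be differentially algebraic over each of $\ff\langle x\rangle$ and $\ff\langle y\rangle$, hence $D\in\ff[\delta]$, and then $\ff\langle\mathcal P(u)\rangle\subseteq\ff\langle D(u)\rangle\subsetneq\ff\langle u\rangle$ contradicts properness.

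The genuine gap is your Case~B, which you yourself flag as ``the main obstacle.'' When $\gcrd(\mathcal L_1,\mathcal L_2)=1$ over $K$, the lclm cofactors $(\Lambda_0,\Xi_0)$ with $\ord\Lambda_0=m_2$, $\ord\Xi_0=m_1$ always satisfy $\Lambda_0\mathcal L_1+\Xi_0\mathcal L_2=0$, so $R=0$ boils down precisely to the single scalar condition $\Lambda_0(b_1)+\Xi_0(b_2)=0$. A ``leading-coefficient comparison in $\Lambda\mathcal L_1=-\Xi\mathcal L_2$'' cannot help here, since that operator identity is compatible with coprimality; and the hypothesis $\gcd(P_i,Q_i)=1$ bears on each coordinate separately, not on this mixed inhomogeneous relation. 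The paper avoids this case-split entirely by a different, computational route: it shows directly that $\mathrm{coeff}(R,x^{(m_2)})\neq0$. This coefficient is $\det(M_1)$, where $M_1$ replaces the $f_1^{(m_2)}$-row of $M$ by the coefficient row of $Q_1(u)$. Expanding by the last column yields
\[
\det(M_1)=a(x,y)\,Q_1(u)+\sum_{i<m_2}a_i(x,y)f_1^{(i)}+\sum_{j\le m_1}b_j(x,y)f_2^{(j)},
\]
with $a(x,y)=\pm\, p(y)\det(M_2)$ and $\det(M_2)=\delta\text{-}\Res^{h}(\mathcal L_1,\mathcal L_2)$. Your Case-A reasoning (equivalently Chardin's criterion) gives $\det(M_2)\neq0$, and crucially $\ord_xa<m_2=\ord_xA$, so $a\notin\sat(A)$ and $a(\mathcal P(u))\neq0$. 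Substituting $(x,y)=\mathcal P(u)$ kills the $f_i^{(k)}$ terms and leaves $\det(M_1)\big|_{\mathcal P(u)}=a(\mathcal P(u))\,Q_1(u)\neq0$, hence $R\neq0$. This evaluation at the generic point is exactly what handles the inhomogeneous contribution $b_i$ that your approach leaves open.
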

\proof
Let $f_1=xQ_1(u)-P_1(u),  f_2=yQ_2(u)-P_2(u)\in \ff\{u,x,y\}$. 
Denote $m=m_1+m_2+2$.
Let $M\in \ff\{ x,y\}^{m\times m}$ be the differential resultant matrix of $f_1, f_2$ w.r.t. $u$.
Then $R:=\delta\text{-}\Res(f_1,f_2)=\det(M)\in  \ff\{ x,y\}$.
To show $R\neq0$, it suffices to prove that $\text{coeff}(\det(M), x^{(m_2)})\neq 0$.

Note the fact that only $f^{(m_2)}$ effectively involves $x^{(m_2)}$ and $f^{(m_2)}$ is linear in $x^{(m_2)}$ with coefficient $Q_1(u)$.
So $\text{coeff}(\det(M), x^{(m_2)})=\det(M_1)$, where $M_1\in \ff\{x,y\}^{m\times m}$ be the resultant matrix of $Q_1,f_1^{(m_2-1)},\ldots,f_1'$, $f_1,f_2^{(m_1)},\ldots,$ $f_2',f_2$ w.r.t. the variables $u^{(m_1+m_2)}, u^{(m_1+m_2-1)},\ldots,u',u$. 
For $j=1,\ldots, {m-1}$, multiply the $j$-th column of $M_1$ by $u^{({m-1-j})}$ 
 and add it to the last column, then compute $ \det(M_1)$ by the last column.
So there exist $a_i,b_j,a \in \ff\{x,y\}$ such that 
 \begin{equation}
 \label{eq-det}
 \det(M_1)=a(x,y)Q_1(u)+\sum_{i=0}^{m_2-1}a_i(x,y)f_1^{(i)}+\sum_{j=0}^{m_1}b_j(x,y)f_2^{(j)}.
 \end{equation}  
Clearly, $a(x,y)=p(y) \cdot \det(M_2) \cdot (-1)^{m_1+1}$, where $p(y)=\text{coeff}(f_2,u^{(m_2)})\neq 0$ and $M_2$ is the submatrix  obtained from $M_1$ by removing the $1$-th, the $(m_2+2)$-th rows, and the $1$-th, the {$m$-th} columns. 
 We claim that $\det(M_2)\neq0$.  
 
If $m_i=0$ for some $i$, then $\det(M_2)=(\text{coeff}(f_i,u))^{m-2} \neq 0$. 
%If $m_2=0, \det(M_2)=(\text{coeff}(f_2,u))^{m_1} \neq 0$. 
Now suppose $m_1,m_2 >0$.
Assume $\mathcal{P}(u)=(\frac{L_{11}(u)+a_{11}}{L_{12}(u)+a_{12}},\frac{L_{21}(u)+a_{21}}{L_{22}(u)+a_{22}})$ where $L_{ij}\in \ff[\delta]$, and for each $i$, $L_{i1}$ and $L_{i2}$ are not both equal to zero.
Clearly, $$\det(M_2)=\delta\text{-}\Res^{h}(xL_{12}(u)-L_{11}(u),yL_{22}(u)-L_{21}(u)).$$
By \cite[Theorem 2]{chardin}, $\delta$-$\Res^{h}(xL_{12}(u)-L_{11}(u),yL_{22}(u)-L_{21}(u))\neq0$ if and only if $\text{gcrd}(xL_{12}-L_{11},yL_{22}-L_{21})=1$ (over $\ff\langle x,y\rangle$).  
We now show $\text{gcrd}(xL_{12}-L_{11},yL_{22}-L_{21})=1$.
Suppose the contrary, that is, $\text{gcrd}(xL_{12}-L_{11},yL_{22}-L_{21})=D(\delta)$ which is monic of degree greater than 0.
By Lemma \ref{lm-rightdivisor}, $D(\delta)$ can not effectively involve $x$ or $y$. For if not, suppose $D(\delta)$ effectively involves $x$, which contradicts the fact that the coefficients of a monic right divisor of $yL_{22}-L_{21}$ have finite transcendence degree over $\ff\langle y\rangle$ 
($x$ is differentially transcendental over $\ff\langle y\rangle$).
So $D(\delta)\in \ff[\delta]$. 
As a consequence, $\ff\langle\mathcal{P}(u)\rangle\subseteq \ff\langle D(u)\rangle\subsetneqq \ff\langle u\rangle$,    contradicting the hypothesis that $\mathcal{P}(u)$ is proper. 
Thus, $\text{gcrd}(xL_{12}-L_{11},yL_{22}-L_{21})=1$ and $\det(M_2)\neq0$ follows.

Since $\det(M_2)\neq0$, $a(x,y)=(-1)^{m_1+1}p(y)\cdot \det(M_2) \neq 0$.
Let $A(x,y)\in\ff\{x,y\}$ be an irreducible differential polynomial such that $\mathcal P(u)$ is the general component of $A$.
Since $\mathcal P(u)$ is proper, by Theorem \ref{th-order},  
$\ord_xA=m_2$ and $\ord_yA=m_1$.
The fact $\ord_x\det(M_2)< m_2$ implies that $\ord_xa(x,y)<m_2$. 
So $a(\mathcal P(u)) \neq 0$. 
By (\ref{eq-det}), det$(M_1)\big|_{(x,y)=\mathcal P(u)} \neq 0$ and thus $\text{coeff}(R(x,y), x^{(m_2)})=\det(M_1)\neq0$.
So $R\neq0$, $\ord_xR=m_2$ and $R$ is linear in $x^{(m_2)}$.
Since $R\in [f_1,f_2]{\,\cap\, \ff\{x,y\}} \subset \mathbb{I}(\mathcal P(u))=\sat(A)$,
$\ord_yR\geq \ord_yA=m_1$. Since $\ord_yR\leq m_1$,  $\ord_yR=m_1$ and $R$ is linear in $y^{(m_1)}$.
This completes the proof.  
\qed

\vskip5pt
By Theorem \ref{d-resultant},  the implicitization of a proper linear differential rational parametrization can be reduced to the computation of the corresponding differential resultant.
\begin{cor} \label{cor-resultant}
Let $(\C,A)$ be a unirational differential curve with a  linear differential rational parametrization $\mathcal P(u)=\big(\frac{P_1(u)}{Q_1(u)}, \frac{P_2(u)}{Q_2(u)}\big)$ {with $m_i=\ord(\frac{P_i}{Q_i}) \geq 0$ for $i= 1,2$}. 
Suppose $\mathcal P(u)$ is proper. 
Then  $A$ is the main irreducible factor of $R=\delta\text{-}\Res\big(xQ_1(u)-P_1(u),yQ_2(u)-P_2(u)\big)$. That is,
if $R=AB$, then $\ord(B)<\ord(R)$. In particular,  $$\deg_{x^{(m_2)}}A=1  \text{ and } \deg_{y^{(m_1)}}A=1.$$
\end{cor}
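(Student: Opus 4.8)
The plan is to combine the quantitative data about $R$ furnished by Theorem \ref{d-resultant} with the divisibility criterion in Ritt's general component theorem (Lemma \ref{lm-satA}). First I would record what is already available: since $\mathcal P(u)$ is proper, Theorem \ref{d-resultant} gives $R\neq0$, $\ord_xR=m_2$, $\ord_yR=m_1$, and $R$ linear in both $x^{(m_2)}$ and $y^{(m_1)}$; moreover, as noted in that proof, $R\in[f_1,f_2]\cap\ff\{x,y\}\subset\mathbb I(\mathcal P(u))=\sat(A)$, where $f_1=xQ_1(u)-P_1(u)$ and $f_2=yQ_2(u)-P_2(u)$. So $R$ is a nonzero element of the prime ideal $\sat(A)$ with known leading behaviour.

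Next I would compare orders. Because $\mathcal P(u)$ is proper, Theorem \ref{th-order} forces $\ord_xA=m_2$ and $\ord_yA=m_1$, hence $\ord(A)=\max\{m_1,m_2\}=\ord(R)$. Thus $R\in\sat(A)$ satisfies $\ord(R)\leq\ord(A)$, and Lemma \ref{lm-satA} yields that $A$ divides $R$. Writing $R=AB$ with $B\in\ff\{x,y\}$, the corollary is then reduced to proving $\ord(B)<\ord(R)$ together with the two degree equalities.

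For the degree and order statements I would argue one leading derivative at a time. Viewing $R$, $A$, $B$ as univariate polynomials in $x^{(m_2)}$ over the integral polynomial ring generated by the remaining derivatives, degree is additive, so $\deg_{x^{(m_2)}}A+\deg_{x^{(m_2)}}B=\deg_{x^{(m_2)}}R=1$; since $\ord_xA=m_2$ forces $x^{(m_2)}$ to occur in $A$, we have $\deg_{x^{(m_2)}}A\geq1$, whence $\deg_{x^{(m_2)}}A=1$ and $\deg_{x^{(m_2)}}B=0$. The identical argument in $y^{(m_1)}$ gives $\deg_{y^{(m_1)}}A=1$ and $\deg_{y^{(m_1)}}B=0$, which already establishes the displayed equalities. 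As $B$ then contains neither $x^{(m_2)}$ nor $y^{(m_1)}$, we get $\ord_xB\leq m_2-1$ and $\ord_yB\leq m_1-1$, so $\ord(B)\leq\max\{m_1,m_2\}-1<\ord(R)$, as required. The one point demanding care is the hypothesis $\ord(R)\leq\ord(A)$ needed to apply Lemma \ref{lm-satA}: this rests entirely on the exact order identities $\ord_xR=m_2,\ \ord_yR=m_1$ from Theorem \ref{d-resultant} matching $\ord_xA=m_2,\ \ord_yA=m_1$ from Theorem \ref{th-order}, and on remembering that $\ord_xA=m_2$ makes $x^{(m_2)}$ genuinely appear in $A$ so that the linearity of $R$ is inherited by $A$ rather than absorbed into $B$; everything else is routine bookkeeping with degrees in a single variable over a domain.
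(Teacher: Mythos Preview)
Your proposal is correct and follows exactly the line of reasoning the paper intends: the paper states the corollary as an immediate consequence of Theorem~\ref{d-resultant} without giving a separate proof, and your argument spells out the implicit details—namely, combining the order and linearity data for $R$ from Theorem~\ref{d-resultant} with the divisibility criterion of Lemma~\ref{lm-satA} (using $\ord(R)=\ord(A)$ from Theorem~\ref{th-order}), then reading off $\deg_{x^{(m_2)}}A=\deg_{y^{(m_1)}}A=1$ and $\ord(B)<\ord(R)$ by additivity of degree in each leading derivative.
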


Another direct consequence of Theorem \ref{d-resultant} gives a necessary condition for a unirational differential curve to possess a proper linear differential rational parametrization.

\begin{cor}
Suppose $A\in \ff\{x,y\}$ defines a unirational differential curve.
A necessary condition such that $(\C,A)$ has a proper linear differential rational parametrization is that $A$ is quasi-linear under any ranking.
\end{cor}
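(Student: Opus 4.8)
The plan is to extract all the structural information about $A$ directly from Theorem \ref{th-order} and Corollary \ref{cor-resultant}, and then to argue that, whatever ranking is imposed, the leader of $A$ is forced to be one of exactly two derivatives whose degrees have already been controlled. Recall that $A$ being quasi-linear under a ranking $\rr$ means that $A$ is linear in its leader $u_A$, i.e. $\deg_{u_A}A=1$. So suppose $(\C,A)$ carries a proper linear differential rational parametrization $\mathcal P(u)=(P_1/Q_1,P_2/Q_2)$ and set $m_i=\ord(P_i/Q_i)$.

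First I would dispose of the degenerate cases. If one coordinate is constant, say $P_1/Q_1=a_1\in\ff$, then the proof of Theorem \ref{th-order} (via Lemma \ref{lm-mobuis}) forces $A=x-a_1$ up to a factor in $\ff$, which is visibly quasi-linear under every ranking; the symmetric case is identical. This reduces us to the generic situation $m_1,m_2\ge 0$ treated in Corollary \ref{cor-resultant}. There, Theorem \ref{th-order} gives $\ord_xA=m_2$ and $\ord_yA=m_1$, so the only derivatives occurring in $A$ are $x,x',\dots,x^{(m_2)}$ and $y,y',\dots,y^{(m_1)}$, with $x^{(m_2)}$ and $y^{(m_1)}$ both appearing effectively; and Corollary \ref{cor-resultant} supplies the crucial degree data $\deg_{x^{(m_2)}}A=1$ and $\deg_{y^{(m_1)}}A=1$.

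The key step is then to pin down the leader under an arbitrary ranking $\rr$. The ranking axioms $u<\delta u$ and $u<v\Rightarrow\delta u<\delta v$ force $x^{(i)}<x^{(m_2)}$ for all $i<m_2$ and $y^{(j)}<y^{(m_1)}$ for all $j<m_1$; hence among the derivatives actually present in $A$ the only candidates for the $\rr$-maximal one are $x^{(m_2)}$ and $y^{(m_1)}$, so $u_A=\max_\rr\{x^{(m_2)},y^{(m_1)}\}$. In either case the degree of $A$ in $u_A$ equals $1$ by the previous paragraph, whence $A$ is linear in its leader, i.e. quasi-linear under $\rr$. As $\rr$ was arbitrary, this holds for every ranking.

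I do not expect a genuine obstacle here: the only point deserving care is precisely the quantification over all rankings, since a priori the leader could be any derivative appearing in $A$. The resolution is the monotonicity built into the notion of a ranking, which confines the leader to the two top derivatives $x^{(m_2)}$ and $y^{(m_1)}$ — and these are exactly the ones whose degree Corollary \ref{cor-resultant} controls. All the real work has therefore already been carried out in Theorem \ref{th-order} and Corollary \ref{cor-resultant}, and the argument amounts to assembling them.
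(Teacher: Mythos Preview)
Your proposal is correct and is exactly the argument the paper has in mind: the paper states this corollary without proof as ``another direct consequence of Theorem \ref{d-resultant}'' (equivalently, of Corollary \ref{cor-resultant}), and your unpacking---handling the degenerate constant-coordinate case via the proof of Theorem \ref{th-order}, extracting $\deg_{x^{(m_2)}}A=\deg_{y^{(m_1)}}A=1$ from Corollary \ref{cor-resultant}, and then using the ranking axioms to force $u_A\in\{x^{(m_2)},y^{(m_1)}\}$---is precisely the intended route.
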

%\textcolor{teal}{
%\begin{cor}
%Let $(\C,A)$ be a unirational differential curve with a  linear differential rational parametrization $\mathcal{P}(u)=(\frac{L_{11}(u)+a_{11}}{L_{12}(u)+a_{12}},\frac{L_{21}(u)+a_{21}}{L_{22}(u)+a_{22}})$ where $L_{ij}\in \ff[\delta]$, and for each $i$, $L_{i1}$ and $L_{i2}$ are not both equal to zero, a necessary condition for $\mathcal P(u)$ being proper is that $\delta\text{-}\Res^{h}(xL_{12}(u)-L_{11}(u),yL_{22}(u)-L_{21}(u)) \neq 0$.\end{cor}
%}
 
\vskip3pt
In \cite[Theorem 30]{Rueda},   Rueda et al. proved that a linear differential polynomial parametrization $(P(u),Q(u))\in\ff\{u\}^2$ is proper if and only if the  differential resultant $\delta\text{-}\Res(x-P(u),y-Q(u)) \neq 0$. 
However, this result is not valid for  linear differential rational parametrizations. 
We give a non-example as follows.

\begin{example}
 Let $\mathcal{P}(u)=(\frac{u''+1}{u},\frac{u''+1}{u})$. 
 Obviously, $\mathcal{P}(u)$ is not proper.
 But the differential resultant $\delta\text{-}\Res\big(xu-u''-1,yu-u''-1\big)=(y-x)^{3} \neq 0.$
 Note that $\ord((y-x)^{3} )<\ord(\mathcal{P}(u))$.
\end{example}

Below, we shall give a characterization of properness for linear differential rational parametrizations with the use of differential resultant
combined with the order property.
Before that, we need some preparations by studying a particular differential remainder sequence.

Given a linear differential rational parametrization $\mathcal P(u)=\big(\frac{P_1(u)}{Q_1(u)}, \frac{P_2(u)}{Q_2(u)}\big)$ with $\gcd(P_i,Q_i)=1$ and $m_i=\ord(\frac{P_i(u)}{Q_i(u)})\geq0$ for $i=1,2$. 
Without loss of generality, suppose $m_1\geq m_2$.
Denote
\begin{equation}
f_1(x, y, u)= P_1(u)-xQ_1(u), f_2(x, y, u)= P_2(u)-yQ_2(u).\end{equation}
Fix the elimination ranking $\mathscr R: x<y<u$. 
Let $f_3(x, y, u)=\delta$-$\prem(f_1,f_2)$ be the Ritt-Kolchin remainder of $f_1$ with respect to $f_2$ under $\mathscr R$.  
Since $f_1\notin\sat(f_2)$,  $f_3\neq 0$, $\ord_uf_3<m_2$ and $\ord_yf_3\leq m_1-m_2$.
And $Q_1\notin\sat(f_2)$ implies $\ord_xf_3=0$.
% \textcolor{blue}{(If $\ord_xf_3 <0$, then the fact that $f_3 \in [f_1,f_2]$ yields ${Q_1}^k f_3 \in [f_2] \subset \sat(f_2)$ for some $k \in \N$. Thus $f_3 \in \sat(f_2)$, contradicting $\ord_uf_3 < \ord_uf_2$)}, 
If $\ord_uf_3\geq0$, let $f_4(x, y, u)=\delta$-$\prem(f_2,f_3)$. 
Then, we have 
\begin{equation}   \label{eq-f4}
\ord_uf_4<\ord_uf_3,\,\ord_xf_4 \leq m_2- \ord_uf_3.
\end{equation} 
 If $\ord_uf_4 \geq 0$, then let  $f_5=\delta$-$\prem(f_3,f_4)$.
Continue the differential reduction process when  $\ord_uf_{l-1}\geq0$ until we get $f_l=\delta$-$\prem(f_{l-2},f_{l-1})\in \ff\{x,y\}$ for some $l\in\mathbb N$.
Then, we obtain a sequence of differential remainders 
\begin{equation} \label{eq-remaindersequence}
f_1(x,y,u), f_2(x,y,u), f_3(x,y,u),\cdots,f_{l-1}(x,y,u),f_{l}(x,y).
\end{equation}

\begin{lemma} \label{lm-reductionsequence}
The obtained sequence (\ref{eq-remaindersequence})
 satisfies the following properties:
 \begin{itemize}
 \item[1)] For $2 \leq i \leq l-1$, $\ord_uf_i+\ord_xf_{i+1} \leq m_2$ and $\ord_uf_i+\ord_yf_{i+1} \leq m_1$.
 \item[2)] For each $ i \geq 2$,   $f_i$ has  the following representation form 
\begin{equation} \label{eq-remainderform}
f_{i+1}=\sum_{k=0}^{m_2-\ord_uf_{i}}A_{i,k}(x,y)f_1^{(k)}+\sum_{j=0}^{m_1-\ord_uf_{i}}B_{i,j}(x,y)f_2^{(j)}
\end{equation} 
 where  {$A_{i,k}, B_{i,j}\in \ff\{x,y\}$}, and  {$B_{i,m_1-\ord_uf_{i}}\neq0, A_{i,m_2-\ord_uf_i}\neq0$ are  products of separants of $f_k\,(k \leq i)$}.   
 \end{itemize}
  \end{lemma}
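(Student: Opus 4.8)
The plan is to prove both parts simultaneously by induction on $i$, analyzing a single step of the sequence, namely the pseudo-division $f_{i+1}=\delta\text{-}\prem(f_{i-1},f_i)$. Write $n_i:=\ord_uf_i$, so that $n_1=m_1\geq n_2=m_2>n_3>n_4>\cdots$ strictly decreases until $f_l\in\ff\{x,y\}$. First I would record the structural fact that every $f_i$ is linear in the derivatives of $u$: this holds for $f_1,f_2$ because $\mathcal P(u)$ is a linear parametrization, and it is preserved under $\delta\text{-}\prem$ since each separant $\S_{f_i}$ lies in $\ff\{x,y\}$ and each $f_i^{(j)}$ stays linear in the $u^{(k)}$. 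Consequently, for $i\leq l-1$ the leader of $f_i$ is $u^{(n_i)}$ with degree $1$, and $\S_{f_i}$ is merely its coefficient, a nonzero element of $\ff\{x,y\}$ satisfying $\ord_x\S_{f_i}\leq\ord_xf_i$ and $\ord_y\S_{f_i}\leq\ord_yf_i$.

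For part 2) I would make the single reduction step explicit. Eliminating $u^{(n_{i-1})},\ldots,u^{(n_i)}$ from $f_{i-1}$ by the derivatives $f_i^{(n_{i-1}-n_i)},\ldots,f_i$ produces
\begin{equation}
f_{i+1}=\S_{f_i}^{\,e_i}\,f_{i-1}-\sum_{j=0}^{n_{i-1}-n_i}c_{i,j}\,f_i^{(j)},
\end{equation}
where $e_i\in\N$, $c_{i,j}\in\ff\{x,y\}$, and the top cofactor has the shape $c_{i,n_{i-1}-n_i}=\pm\,\S_{f_{i-1}}\S_{f_i}^{\,r}$ for some $r\geq0$. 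Substituting the inductively known representations of $f_{i-1}$ and $f_i$ and differentiating the one for $f_i$, the highest derivatives of $f_1$ and $f_2$ that can occur are $f_1^{(m_2-n_i)}$ and $f_2^{(m_1-n_i)}$, and they are contributed only by the summand $c_{i,n_{i-1}-n_i}f_i^{(n_{i-1}-n_i)}$, since every other summand reaches strictly lower derivatives (because $n_{i-2},n_{i-1}>n_i$). Collecting terms yields the claimed form (\ref{eq-remainderform}), with top coefficients $A_{i,m_2-n_i}=\pm\,c_{i,n_{i-1}-n_i}A_{i-1,m_2-n_{i-1}}$ and $B_{i,m_1-n_i}=\pm\,c_{i,n_{i-1}-n_i}B_{i-1,m_1-n_{i-1}}$; by the induction hypothesis these are products of separants of $f_1,\ldots,f_i$, and they are nonzero because every separant involved is nonzero. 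The base case $i=2$ is the explicit reduction $f_3=\S_{f_2}^{\,m_1-m_2+1}f_1-\sum_j c_{2,j}f_2^{(j)}$.

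For part 1) I would track the $x$- and $y$-orders through the same reduction, carrying as induction hypotheses $\ord_xf_i\leq m_2-n_{i-1}$ and $\ord_xf_{i-1}\leq m_2-n_{i-2}$ together with their $y$-analogues (with $m_1$). Since $\ord_x\S_{f_i}\leq\ord_xf_i\leq m_2-n_{i-1}<m_2-n_i$, the term $\S_{f_i}^{\,e_i}f_{i-1}$ has $x$-order at most $m_2-n_{i-1}<m_2-n_i$; and for $j\leq n_{i-1}-n_i$ one has $\ord_xf_i^{(j)}\leq\ord_xf_i+j\leq(m_2-n_{i-1})+(n_{i-1}-n_i)=m_2-n_i$. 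After checking that each cofactor $c_{i,j}$ likewise has $x$-order at most $m_2-n_i$ (which holds because the only multiplier used along the intermediate remainders is $\S_{f_i}$, whose $x$-order is already below $m_2-n_i$), I obtain $\ord_xf_{i+1}\leq m_2-n_i$, i.e. $\ord_uf_i+\ord_xf_{i+1}\leq m_2$. Replacing $m_2$ by $m_1$ throughout gives $\ord_uf_i+\ord_yf_{i+1}\leq m_1$. The base cases $i=2,3$ are exactly the facts already recorded, namely $\ord_xf_3=0$, $\ord_yf_3\leq m_1-m_2$, and (\ref{eq-f4}).

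I expect the main obstacle to be the order bookkeeping in part 1): one must verify that neither the repeated multiplication by $\S_{f_i}$ nor the differentiation of $f_i$ pushes the $x$- and $y$-orders of the successive intermediate remainders (hence of the cofactors $c_{i,j}$) past the tight thresholds $m_2-n_i$ and $m_1-n_i$. This is precisely where the linearity of the $f_i$ in the $u^{(k)}$ is essential, as it keeps $\S_{f_i}$ a single coefficient whose $x$- and $y$-orders are dominated by those of $f_i$ itself; granted this, the remainder of the argument is a routine propagation of the inductive order bounds through one pseudo-division step.
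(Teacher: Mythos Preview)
Your proposal is correct and follows essentially the same route as the paper's proof: both argue by induction on $i$, exploit the linearity of the $f_i$ in the $u^{(k)}$ so that each separant $\S_{f_i}\in\ff\{x,y\}$, make the single reduction step $f_{i+1}=\S_{f_i}^{e_i}f_{i-1}-\sum_j c_{i,j}f_i^{(j)}$ explicit, and then propagate the order bounds and the representation (\ref{eq-remainderform}) through that step. The only minor slip is in your base cases for part 1): equation (\ref{eq-f4}) records only the $x$-bound for $f_4$, not the $y$-bound, but since your inductive step already applies at $i=3$ (using $\ord_y f_2=0\le m_1-n_1$ and $\ord_y f_3\le m_1-m_2$), you really only need $i=2$ as a base case and can derive the $i=3$ bounds from the step itself.
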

\proof We shall show 1) and 2) by  induction on $i$. 
First note that  $\ord_uf_2+\ord_xf_{3}=m_2$ and $\ord_uf_2+\ord_yf_{3} \leq m_1$.
By the differential reduction formula for $f_3=\delta$-$\prem(f_1,f_2)$,
 there exist $a\in\mathbb N$ and $C_k  {\in \ff \{x,y\}}$ such that  $f_3=(\S_{f_2})^af_1-(\S_{f_2})^{a-1}\S_{f_1}f_2^{(m_1-m_2)}-\sum_{k=0}^{m_1-m_2-1}C_kf_2^{(k)}$.  So  both 1) and 2) holds for $i=2$. 
 
Now suppose  1) and 2) holds for $i\leq j\, ({\,\geq 2})$. We consider the case for $i=j+1$.
Since both $f_1$ and $f_2$ are linear differential polynomials in $u$, 
all the $f_i\,(i\leq l-1)$ are linear in $u$ and its derivatives and thus $\ord_uf_i<\ord_uf_{i-1}$.
By the induction hypothesis, $\ord_uf_{j}+\ord_xf_{j+1} \leq m_2$, and $f_{j+1}$ has a representation form as (\ref{eq-remainderform}) with $B_{j,m_1-\ord_uf_{j}}\neq0 {\,,A_{j,m_2-\ord_uf_{j}}\neq0}$.
Since $f_{j+2}=\delta$-$\prem(f_{j},f_{j+1})$, $f_{j+2}$ is a linear combination of $f_{j}$ and $f_{j+1}, f_{j+1}',\ldots, f_{j+1}^{(s)}$ with  coefficients in $\ff\{x,y\}$ and in particular the nonzero coefficient for $f_{j+1}^{(s)}$ is a product of separants of $f_j, f_{j+1}$ where $s=\ord_uf_j-\ord_uf_{j+1}$.
Thus, $f_{j+2}$ has a representation form as (\ref{eq-remainderform}) with $B_{j+1,m_1-\ord_uf_{j+1}}\neq0 {\,,A_{j+1,m_2-\ord_uf_{j+1}}\neq0}$ being products of the separants of $f_k\,(k\leq j+1)$.
And  \begin{eqnarray}
 \ord_xf_{j+2} &\leq& \max\{\ord_xf_{j+1}+\ord_uf_{j}-\ord_uf_{j+1}, \ord_xf_{j}\} \nonumber \\
 &\leq& \max\{m_2-\ord_uf_j+\ord_uf_{j}-\ord_uf_{j+1}, m_2-\ord_uf_{j-1}\}  \nonumber \\
  &\leq& m_2-\ord_uf_{j+1}. \nonumber
 \end{eqnarray}
So $\ord_uf_{j+1}+\ord_xf_{j+2} \leq m_2$. 
Similarly, $\ord_uf_{j+1}+\ord_yf_{j+2} \leq m_1$ can be shown.  
Thus, 1) and 2) are proved by induction.
\qed

\vskip5pt
Now, we are ready to propose the main theorem.
\begin{theorem} \label{th-propercriteria}
Let $\mathcal P(u) =(\frac{P_1(u)}{Q_1(u)},\frac{P_2(u)}{Q_2(u)})$ be a linear differential rational parametrization with $m_i=\ord(\frac{P_i(u)}{Q_i(u)})\geq 0$. 
Then $\mathcal{P}(u)$ is proper if and only if
$$R:=\delta\text{-}\Res\big(xQ_1(u)-P_1(u),yQ_2(u)-P_2(u)\big) \neq 0  \text{\,\, and\,\, }  {\ord_xR=\ord\big(\frac{P_2}{Q_2}\big), \ord_yR=\ord\big(\frac{P_1}{Q_1}\big)}.$$
\end{theorem}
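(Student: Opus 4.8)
The forward implication is exactly the content of Theorem \ref{d-resultant}: once $\mathcal{P}(u)$ is known to be proper (and each $m_i\geq 0$), that theorem already delivers $R\neq 0$, $\ord_xR=m_2$ and $\ord_yR=m_1$. So the whole task is the converse, and the plan is to extract properness, i.e. $\ff\langle\mathcal{P}(u)\rangle=\ff\langle u\rangle$, from the hypotheses $R\neq 0$, $\ord_xR=m_2$ and $\ord_yR=m_1$. Assume without loss of generality $m_1\geq m_2$, so that the differential remainder sequence (\ref{eq-remaindersequence}) and Lemma \ref{lm-reductionsequence} are available; these are the engine of the argument.

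First I would build a bridge between the differential resultant and this remainder sequence. Since $f_1,f_2$ are linear in $u$, row-reducing the differential resultant matrix $M$ of Definition \ref{dresultant} is precisely the differential analogue of a subresultant polynomial remainder sequence, so I expect $R=\det(M)$ to factor as $R=\pm J\cdot f_l$, where $f_l(x,y)$ is the terminal $u$-free remainder and $J\in\ff\{x,y\}$ is a product of the separants (the leading $u$-coefficients) of the intermediate $f_i$. The representation (\ref{eq-remainderform}) together with the order bounds of Lemma \ref{lm-reductionsequence}(1) would then let me control $\ord_xJ$ and $\ord_yJ$ and transfer the order count from $f_l$ to $R$, giving $\ord_xR=\ord_xf_l$ and $\ord_yR=\ord_yf_l$ once one checks that the separant factors do not raise these orders.

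With the bridge in place the order hypotheses become statements about the chain. Applying Lemma \ref{lm-reductionsequence}(1) at $i=l-1$ gives $\ord_uf_{l-1}+\ord_xf_l\le m_2$ and $\ord_uf_{l-1}+\ord_yf_l\le m_1$; hence $\ord_xf_l=m_2$ and $\ord_yf_l=m_1$ force $\ord_uf_{l-1}=0$ and push every intermediate inequality to equality, so the $u$-orders $\ord_uf_2>\ord_uf_3>\cdots>\ord_uf_{l-1}$ descend by exactly one at each step. A single drop of size $\geq 2$ is exactly the degeneracy produced by a proper containment $\ff\langle\mathcal{P}(u)\rangle\subsetneq\ff\langle u\rangle$: it corresponds to a nontrivial common right factor of the homogeneous parts, as in the proof of Theorem \ref{d-resultant}, and it makes the top $x^{(m_2)}$- and $y^{(m_1)}$-coefficients of $f_l$ cancel, so that the order of $R$ strictly drops. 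The example $\mathcal{P}(u)=(\tfrac{u''+1}{u},\tfrac{u''+1}{u})$, where the chain jumps from $\ord_uf_2=2$ to $\ord_uf_3=0$ and indeed $\ord_xR=0<2$, is the model to keep in mind. Thus a strictly regular chain is equivalent to $\ff\langle\mathcal{P}(u)\rangle=\ff\langle u\rangle$, which is exactly properness; equivalently, improperness forces $R=0$ or a strict drop in $\ord_xR$ or $\ord_yR$. Combined with Theorem \ref{proper-rational} this closes the equivalence.

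The step I expect to be the main obstacle is making the subresultant-type identity $R=\pm J\cdot f_l$ precise and, in particular, proving the dichotomy for the leading coefficients: that a regular chain produces \emph{no} cancellation in the top $x^{(m_2)}$- and $y^{(m_1)}$-terms (so the order of $R$ stays maximal), whereas a degenerate chain forces such cancellation. This is the differential counterpart of the classical fact that the resultant detects a common factor through a rank drop of the Sylvester matrix, but here it must be carried out with the separant bookkeeping of Lemma \ref{lm-reductionsequence} so that the orders in $x$ and in $y$ are tracked separately rather than only the total order.
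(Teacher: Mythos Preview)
Your forward direction and the opening setup (remainder sequence, Lemma \ref{lm-reductionsequence}) match the paper. The backward direction, however, has two genuine gaps.

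First, the factorization $R=\pm J\cdot f_l$ with $J$ a product of separants is not what actually comes out. The paper obtains, via row operations on the resultant matrix using the coefficients from (\ref{eq-remainderform}), an identity of the form $R=(-1)^{m_1}c\cdot (f_l/a)\cdot\det(M_2)$, where $M_2$ is a specific $(m-2)\times(m-2)$ minor and $a$ is the top separant product from (\ref{eq-remainderform}). What is needed is that $\ord_x\det(M_2)<m_2$, $\ord_xa<m_2$ (and similarly in $y$), so the order hypotheses on $R$ transfer to $f_l$. Your ``product of separants'' picture does not by itself give this control; you need the minor.

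Second, and more seriously, the claim that the $u$-orders ``descend by exactly one at each step'' and that this regularity is \emph{equivalent} to properness is neither established nor needed. From the order bounds you correctly extract $\ord_uf_{l-1}=0$ (the paper gets this separately, from $R\neq0$, via a differentiation argument), but nothing forces intermediate drops to be of size one, and in any case you give no mechanism turning ``regular chain'' into $\ff\langle\mathcal P(u)\rangle=\ff\langle u\rangle$. The paper's route is different: once $\ord_uf_{l-1}=0$ and $h\neq0$ in $f_{l-1}=g(x,y)u+h(x,y)$, the pair $\{f_l,f_{l-1}\}$ is an irreducible autoreduced set under $y<x<u$, and one shows $[f_1,f_2]:(Q_1Q_2)^\infty\subset\sat(f_l,f_{l-1})$ (this uses the nonvanishing of $\mathrm{coeff}(R,x^{(m_2)})$ and $\mathrm{coeff}(R,y^{(m_1)})$ to rule out $Q_1,Q_2\in\sat(f_l,f_{l-1})$). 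Hence the implicit equation $A$ lies in $\sat(f_l)$, so $\ord_xA=m_2$ and $\ord_yA=m_1$, and \emph{now} Theorem \ref{proper-rational} yields properness. Your invocation of Theorem \ref{proper-rational} at the end is the right finishing move, but you never produce the input it requires, namely the orders of $A$; the characteristic-set containment is the missing bridge.
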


\begin{proof}
``$\Rightarrow$": It follows from Theorem \ref{d-resultant}.\\
``$\Leftarrow$":  Without loss of generality, suppose $m_1 \geq m_2$.
If $m_2=0$, then $\ff\langle\frac{P_2(u)}{Q_2(u)}\rangle=\ff\langle u\rangle$ and thus  $\mathcal P(u)$ is proper.
So we only need to consider the case when $m_2\geq 1$. 

 Let $f_1=xQ_1(u)-P_1(u),  f_2=yQ_2(u)-P_2(u)\in \ff\{u,x,y\}$. 
  Fix the elimination ranking $\mathscr R: x<y<u$.
 Do the differential reduction process as in  Lemma \ref{lm-reductionsequence} under $\mathscr R$ and we consider the obtained sequence 
 $$f_1(x,y,u), f_2(x,y,u), f_3(x,y,u),\cdots,f_{l-1}(x,y,u),f_{l}(x,y).$$
By Lemma \ref{lm-reductionsequence}, there exist $a,a_i,b_j \in \ff\{x,y\}$ with $a \neq 0{\, , b_{m_2-\ord_uf_{l-1}} \neq 0}$ such that 
 \begin{equation}\label{eq-fl}
 f_l(x,y)=a(x,y)f_2^{(m_1-\ord_uf_{l-1})}+\sum_{i=0}^{m_1-\ord_uf_{l-1}-1}a_i(x,y)f_2^{(i)}+\sum_{j=0}^{m_2-\ord_uf_{l-1}}b_j(x,y)f_1^{(j)}.
 \end{equation}

 ({\bf Claim A}) $f_l$ and $f_{l-1}$ satisfy the following properties:
 \begin{itemize} 
 \item[1)] $f_l(x,y) \neq 0$,  $\ord_xf_l=m_2$  {and $\ord_yf_l=m_1$}.
 \item[2)] $f_{l-1}(x,y,u)=g(x,y)u+h(x,y)$,where $g,h \in \ff\{x,y\} \backslash \{0\}$.
\end{itemize}

We now proceed to prove {\bf Claim A}. 
We first show that $f_l\neq0$ and $\ord_uf_{l-1}=0$. 
Suppose the contrary that $f_l=0$.
Then by (\ref{eq-fl}), $f_1,f_1',\cdots,$ $f_1^{(m_2-\ord_uf_{l-1})},f_2,f_2',\cdots,f_2^{(m_1-\ord_uf_{l-1})}$ are linearly dependent over $\ff\langle x,y \rangle$.
As a consequence, $f_1,f_1',\cdots,f_1^{(m_2)},f_2,f_2',\cdots,f_2^{(m_1)}$ are linearly dependent over $\ff\langle x,y \rangle$,  which contradicts the fact that $\delta\text{-}\Res(f_1,f_2) \neq 0$. Thus $f_l(x,y) \neq 0$. 
 And by (\ref{eq-fl}), $1$ can be written as a linear combination of $f_1,f_1',\cdots,$ $f_1^{(m_2-\ord_uf_{l-1})},f_2,f_2',\cdots,f_2^{(m_1-\ord_uf_{l-1})}$ over $\ff\langle x,y \rangle$ with a nonzero coefficient for $f_2^{(m_1-\ord_uf_{l-1})}$. 
If $\ord_uf_{l-1}>0$,   then by differentiating on the both sides of this identity form, 
we obtain that $f_1,f_1',\cdots,f_1^{(m_2)}$, $f_2,f_2',\cdots,f_2^{(m_1)}$ are linearly dependent over $\ff\langle x,y \rangle$, which leads to a contradiction. 
So $\ord_uf_{l-1}=0$ and $f_{l-1}=g(x,y)u+h(x,y)$ for some $g,h \in \ff\{x,y\}$ with $g\neq0$.

 It remains to show  that $\ord_xf_l=m_2$, $\ord_yf_l=m_1$ and $h\neq0$. 
 Denote $m=m_1+m_2+2$.
Let $M\in \ff\langle x,y\rangle^{m\times m}$ be the resultant matrix of $f_1^{(m_2)},\ldots,f_1',f_1,f_2^{(m_1)},$ $\ldots,f_2',f_2$ w.r.t. the variables $u^{(m_1+m_2)}, u^{(m_1+m_2-1)},$ $\ldots,u',u$.
We perform row operations on $M$ using the $a_i$ and $b_j$ in (\ref{eq-fl}) as follows.
For $i\neq m_2+2$, add a multiple $c_i$ of the $i$-row to the $(m_2+2)$-th row of $M$ successively and denote the obtained matrix by $M_1$, where for $i=1,\ldots,m_2+1$, $c_i=b_{m_2+1-i}/a$; and for $i\geq m_2+3$, $c_i=a_{m_1+m_2+2-i}/a$.
By (\ref{eq-fl}), the $(m_2+2)$-th row of $M_1$ becomes $(0\,\,0\,\,\cdots\,\,0 \,\,f_l/a)$.
Thus, \begin{equation} \label{eq-res-fl}
\delta\text{-}\Res(f_1,f_2)=\det(M)=\det(M_1)={(-1)^{m_1}}c\cdot f_l/a\cdot \det(M_2),
\end{equation} where $c\neq0$ is the coefficient of $f_1^{(m_2)}$ in $u^{(m_1+m_2)}$,
and $M_2$ is the matrix obtained by deleting the $1$-th row, the $(m_2+2)$-row, the $1$-th column and the last column. 
Obviously,  $\ord_x\det(M_2)<m_2$ and $\ord_xc\leq0$. 
By Lemma \ref{lm-reductionsequence}, $\ord_xa(x,y)<m_2$. 
Since $\ord_x\delta\text{-}\Res(f_1,f_2)=\ord(\frac{P_2}{Q_2})=m_2$,
 by (\ref{eq-res-fl}), we have $\ord_xf_l=m_2$. 
Similarly, $\ord_yf_l=m_1$ follows from the facts that $\ord_y\det(M_2)<m_1$, $\ord_yc=-\infty$, $\ord_ya(x,y)<m_1$
and $\ord_y\delta\text{-}\Res(f_1,f_2)=m_1$.
We finish the proof of {\bf Claim A} by showing $h\neq0$.
If $h=0$, then $f_l=g^kf_{l-2}(x,y,0)$ for some $k\in\mathbb N$ and thus $\ord_xf_l\leq \max\{\ord_xg, \ord_xf_{l-2}\}<m_2$ by Lemma \ref{lm-reductionsequence}, a contradiction.
Thus $h\neq0$  and  {\bf Claim A}  is proved.

\vskip2pt
Since $\det(M)$ is linear in $x^{(m_2)}$ and $\ord_xf_l=m_2$, by (\ref{eq-res-fl}),  $f_l$ is  linear in $x^{(m_2)}$.
Recall that $\ord_xf_{l-1}<m_2$.
Thus, $ \mathcal A=f_l, f_{l-1}$ constitutes an irreducible autoreduced set w.r.t. the elimination ranking $\mathscr R_1: y<x<u$,
and $\mathcal P=\sat(\mathcal A)$ is a prime differential ideal with a characteristic set $ \mathcal A$ under $\mathscr R_1$.
We shall show $[f_1,f_2]:(Q_1Q_2)^\infty \subset\mathcal P$ by proving i) $f_1,f_2 \in \mathcal P$ and ii) $Q_1,Q_2 \notin \mathcal P$.

To show i), by Lemma \ref{lm-reductionsequence}, for each $1 \leq i \leq l-1$, $\ord_xf_i <m_2$ and thus the separant $\S_{f_i} \notin \mathcal P$. By the reduction process, for each $1 \leq i \leq l-2$, 
there exists $k_i \in \N$ such that  $\S_{f_{i+1}}^{k_i}f_i \equiv f_{i+2} \,\text{mod}\,[f_{i+1}]$. 
Therefore, $f_{l-2} \in \mathcal P$ and consequently $ f_2, f_1 \in \mathcal P$.  
 
To show ii),  let $M_3 \in \ff\{x,y\}^{m\times m}$ be the resultant matrix of $Q_1,f_1^{(m_2-1)},\ldots,f_1'$, $f_1,f_2^{(m_1)},\ldots,$ $f_2',f_2$ w.r.t.  $u^{(m_1+m_2)},\ldots, u',u$. 
Then $\det(M_3)=$ coeff$(R,x^{(m_2)}) \neq 0$,
  for   $R$ is linear in $x^{(m_2)}$. 
  Therefore, $Q_1,f_1^{(m_2-1)},\ldots,f_1'$, $f_1,f_2^{(m_1)},\ldots, f_2',f_2$ are linearly independent over $\ff(x^{[m_2-1]},y^{[m_1]})$, and $$\text{Span}_{\ff(x^{[m_2-1]},y^{[m_1]})}(Q_1,f_1^{(m_2-1)},\ldots,f_1', f_1,f_2^{(m_1)},\ldots, f_2',f_2)=\text{Span}_{\ff(x^{[m_2-1]},y^{[m_1]})}(1,u,\cdots,u^{(m_1+m_2)}).$$
{The representation of 1 in terms of $Q_1,f_1^{(m_2-1)},\ldots,f_2$ yields a nonzero differential polynomial ${G} \in  [Q_1,f_1,f_2]\cap\mathcal \ff\{x,y\}$ with $\ord_x{G} <m_2$. 
If $Q_1\in \mathcal P$, then ${G} \in \mathcal P\cap \mathcal F\{x,y\}=\sat(f_l)$, which is impossible. 
Thus, $Q_1 \notin \mathcal P$. 
%Since we also have $\ord_yf_{l-1}<m_1$, $\ord_yf_l=m_1$ and $f_l$ is  linear in $y^{(m_1)}$,
%$f_l, f_{l-1}$ constitute an irreducible  autoreduced set w.r.t. the elimination ranking $\mathscr R: x<y<u$.
%Since  $f_l$ is a characteristic set of $\mathcal P\cap\ff\{x,y\}=\sat(f_l)$ w.r.t. any ranking and $f_{l-1}=g(x,y)u+h(x,y)$,
%$f_l, f_{l-1}$ is also a characteristic set of $\mathcal P$ w.r.t. $\mathscr R$.
Note that $\frac{\partial^2 R}{\partial  x^{(m_2)}\partial  y^{(m_1)}}=\det(M_4)=0$, where $M_4$ is the resultant matrix of $Q_1,f_1^{(m_2-1)},\ldots,f_1'$, $f_1,Q_2,f_2^{(m_1-1)}\ldots,$ $f_2',f_2$ w.r.t.  $u^{(m_1+m_2)},\ldots, u',u$. 
So by (\ref{eq-res-fl}),  if $B$ is the irreducible factor of $f_l$ effectively involving $x^{(m_2)}$, then $\ord_yB=m_1$ and $B$ is  linear in $y^{(m_1)}$. 
Thus, $\mathcal P\cap\ff\{x,y\}=\sat(f_l)=\sat(B)$ and $B$ is a characteristic set of $\sat(f_l)$ w.r.t. any ranking.
Since coeff$(R,y^{(m_1)}) \neq 0$,
repeating the above procedures, we obtain some $H\in[Q_2,f_1,f_2]\cap\mathcal \ff\{x,y\}$ with $\ord_y{H} <m_1$,
 and consequently $Q_2 \notin \mathcal P$.
Thus, $\big([f_1,f_2]\colon(Q_1Q_2)^\infty\big) \subset \mathcal P$.
}

%Similarly, \textcolor{blue}{since $\det(M)$ is linear in $y^{(m_1)}$, by (\ref{eq-res-fl}),  $\ord_yf_l=m_1$ and $f_l$ is  linear in $y^{(m_1)}$. Recall that $\ord_yf_{l-1}<m_1$. Thus, $ \mathcal B=f_l, f_{l-1}$ constitutes an irreducible differential ascending chain w.r.t. the elimination ranking $\mathscr R_1$ with $x<y<u$,
%and $\mathcal P_1=\sat(\mathcal B)$ is a prime differential ideal with $ \mathcal B$ as its characteristic set under $\mathscr R_1$. From the differential reduction process, $\text{coeff}(f_l, x^{(m_2)})$ has order in $y$ lower than $m_1$ and $\text{coeff}(f_l, y^{(m_1)})$ has order in $x$ lower than $m_2$. Therefore, $\frac{\partial f_l}{\partial x^{(m_2)}} \notin \mathcal P_1$ and $\frac{\partial f_l}{\partial y^{(m_1)}} \notin \mathcal P$, which implies $\mathcal P= \mathcal P_1$. Since coeff$(\delta\text{-}\Res(f_1,f_2),y^{(m_1)}) \neq 0$, there exists $N \in \ff\{x,y\}\backslash\{0\}$ with $\ord_yN<m_1$ such that $N \in [Q_2,f_1,f_2]$. If $Q_2 \in \mathcal P$, then $N \in [Q_2,f_1,f_2]\,\cap\,\ff\{x,y\} \subset \mathcal P=\mathcal P_1$, which is impossible. Thus $Q_2 \notin \mathcal P$.} 
%$Q_2 \notin \mathcal P$. Thus, $[f_1,f_2]:(Q_1Q_2)^\infty  \subset \mathcal P$. 

Suppose $\mathcal P(u)$ is a differential rational parametrization of  $A(x,y) \in \ff\{x,y\}$.
Then $A \in \big([f_1,f_2]\colon(Q_1Q_2)^\infty\big) {\,\cap\, \ff \{x,y\}}\subset \mathcal P\cap\mathcal F\{x,y\}=\sat(f_l)$. 
Since $\ord_xA \leq m_2$ by Proposition \ref{prop-orderdifference}, $\ord_xA= m_2$ and {$\ord_yA= m_1-m_2+\ord_xA=m_1$ follows.} 
By Theorem \ref{proper-rational}, $\mathcal P(u)$ is proper.
\end{proof}

%\textcolor{red}{As a direct consequence of the proof of Theorem \ref{th-propercriteria}, another criterion to test properness for  linear differential rational parametrizations  could be stated as follows. 
%\begin{cor}
%Let $\mathcal P(u) =(\frac{P_1(u)}{Q_1(u)},\frac{P_2(u)}{Q_2(u)})$ be a linear differential rational parametrization with $m_i=\ord(\frac{P_i(u)}{Q_i(u)})\geq 0$. 
%Then  $\mathcal{P}(u)$ is proper if and only if both {the resultant matrix of $Q_1,f_1^{(m_2-1)},\ldots,f_1', f_1,$ $f_2^{(m_1)},\ldots, f_2',f_2$ w.r.t.  $u^{(m_1+m_2)},\ldots, u',u$ } and the resultant matrix of $f_1^{(m_2)},\ldots,f_1', f_1,$ $Q_2, f_2^{(m_1-1)},\ldots, f_2',f_2$ w.r.t.  $u^{(m_1+m_2)},\ldots, u',u$ are of full rank. \end{cor} 
%}

Applying Theorem \ref{th-propercriteria}, we can devise an algorithm to decide whether a given linear differential rational parametrization is proper or not and in the affirmative case, to compute the implicit equation. 
Precisely, given the following linear differential rational parametric equations:
\[ \left\{\begin{array}{l}
x=\frac{P_1(u)}{Q_1(u)} \\
\quad \\
y=\frac{P_2(u)}{Q_2(u)}
\end{array},
\right. 
 \]
 we first compute the differential resultant $R=\delta$-$\Res(Q_1(u)x-P_1(u),Q_2(u)y-P_2(u))$ w.r.t. $u$.
 In the cases when (1) $R=0$, or (2) $R\neq0$ and $(\ord_xR,\ord_yR)\neq(\ord(\frac{P_2(u)}{Q_2(u)}),\ord(\frac{P_1(u)}{Q_1(u)})$, this parametrization is not proper.
 Otherwise,  $R\neq0$, $\ord_xR=\ord(\frac{P_2(u)}{Q_2(u)})$ and $\ord_yR=\ord(\frac{P_1(u)}{Q_1(u)})$, then this parametrization is  proper
 and the implicit equation is the main factor of $R$.

 The correctness of this algorithm is guaranteed by Theorem  \ref{th-propercriteria}.
To compute the differential resultant $R$, we just need to compute the determinant of a matrix of size $m_1+m_2+2$ with $m_i=\ord(\frac{P_i}{Q_i})$. So the complexity of this algorithm is $(m_1+m_2+2)^\omega$, which is much more efficient than the implicitization algorithms using characteristic sets.  Here, $\omega$ is the exponent of matrix multiplication.
However, we should point out that when the given paramatrization is not proper, the current algorithm fails to compute the implicit equation while the characteristic set method can always produce the implicit equation.

\begin{remark}
In this section, we are interested in proper linear differential rational parametrizations.
However, even if a uniration differential curve has a linear differential rational parametrization,
it may happen that it does not have a proper linear differential rational parametrization.
For example, let  $A=(x'x+x^{3}-2x^{2}-x)y'+(xy+y^{2})x''+3y^{2}x'x+y^{2}x^{3}-2yx'^{2}+6yx'x+3yx'+4yx^{3}+3yx^{2}-2xy-y+5x^{3}+2x^{2}$. Then $\mathcal P(u)=(\frac{u''}{u'+u'''},\frac{u'+2u''}{u^{(4)}})$ is a linear  differential rational parametrization of $(\C, A)$ and $\mathcal Q(u)=\big(\frac{u}{u'+u^2+1},\frac{2u+1}{u''+3uu'+u^{3}}\big)$ is a proper  differential rational parametrization of $(\C,A)$. 
By Theorem \ref{th-mobius},  any proper  differential rational  parametrization of $(\C,A)$ is of degree greater than $1$.
\end{remark}

 \vskip2pt
 We conclude this section by giving a property of the inversion map of  proper linear differential rational parametrization.
\begin{cor}
Let  $(\C,A)$ be a unirational differential curve with a proper linear differential rational parametrization $\mathcal{P}(u)=(\frac{P_1(u)}{Q_1(u)},\frac{P_2(u)}{Q_2(u)})$.
Let $m_i=\ord(\frac{P_i}{Q_i}) \geq 1$ for $i=1,2$. 
Suppose $\mathcal U: \C  -\hskip-.1truecm -\hskip-.1truecm\to\mathbb A^1$ given by $U(x,y)\in\ff\langle x,y\rangle$ is  the inversion of  $\mathcal P(u)$. 
Then $\ord_xU<m_2 , \ord_yU<m_1$.
 \end{cor}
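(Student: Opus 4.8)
The plan is to extract the inversion directly from the differential remainder sequence constructed in the proof of Theorem~\ref{th-propercriteria}, and then read off its order from the bounds in Lemma~\ref{lm-reductionsequence}. First I would note that the inversion is unique as an element of $\ff\langle\C\rangle=\mathrm{Frac}(\ff\{x,y\}/\sat(A))$: if $U_1,U_2$ both satisfy $U_j(\mathcal P(u))=u$, then $U_1-U_2$ vanishes at the generic point $\mathcal P(u)$ of $\C$ and hence lies in $\mathbb I(\C)=\sat(A)$. So it suffices to exhibit one representative of $U$ with $\ord_x<m_2$ and $\ord_y<m_1$.

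Assuming $m_1\ge m_2$ as in the setup of Lemma~\ref{lm-reductionsequence}, I would reuse the sequence $f_1,\dots,f_{l-1},f_l$ with $f_1=xQ_1(u)-P_1(u)$ and $f_2=yQ_2(u)-P_2(u)$. By Claim~A in the proof of Theorem~\ref{th-propercriteria}, the penultimate remainder has the shape $f_{l-1}=g(x,y)u+h(x,y)$ with $g,h\in\ff\{x,y\}\setminus\{0\}$ and $\ord_uf_{l-1}=0$. Each $f_i$ lies in the differential ideal $[f_1,f_2]$ of $\ff\{x,y,u\}$ (by Lemma~\ref{lm-reductionsequence}(2) it is an $\ff\{x,y\}$-combination of derivatives of $f_1,f_2$), so applying the differential specialization $(x,y)\mapsto\mathcal P(u)$, which kills $f_1$ and $f_2$, gives $g(\mathcal P(u))\,u+h(\mathcal P(u))=0$ in $\ff\langle u\rangle$. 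The candidate inversion is therefore $U:=-h/g$.

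Next I would pin down the orders. The $u$-orders strictly decrease along the sequence, from $\ord_uf_2=m_2\ge1$ down to $\ord_uf_{l-1}=0$; in particular $f_{l-1}\notin\{f_1,f_2\}$ forces $l\ge4$, so $i=l-2$ is an admissible index with $\ord_uf_{l-2}\ge1$. Applying Lemma~\ref{lm-reductionsequence}(1) with $i=l-2$ then yields
\[
\ord_x f_{l-1}\le m_2-\ord_u f_{l-2}\le m_2-1<m_2,\qquad
\ord_y f_{l-1}\le m_1-\ord_u f_{l-2}\le m_1-1<m_1,
\]
so both $g$ and $h$ satisfy $\ord_x<m_2$ and $\ord_y<m_1$. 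It remains to verify that $U=-h/g$ is genuinely the inversion, i.e.\ that its denominator does not vanish on $\C$, equivalently $g\notin\sat(A)$. For this I would combine Theorem~\ref{th-order} (so that $\ord(A)=\max\{m_1,m_2\}=m_1$) with Lemma~\ref{lm-satA}: since $g\ne0$ and $\ord(g)\le m_1-1<\ord(A)$, a membership $g\in\sat(A)$ would force $A\mid g$ and hence $\ord(g)\ge\ord(A)$, a contradiction. Thus $g(\mathcal P(u))\ne0$, $U=-h/g$ is a valid inversion, and the order bounds on $g,h$ give $\ord_xU<m_2$, $\ord_yU<m_1$.

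The work here is essentially bookkeeping rather than a genuine obstacle: the two points needing care are confirming that $l-2$ is an admissible index for Lemma~\ref{lm-reductionsequence} (guaranteed by $m_2\ge1$, which separates $f_{l-1}$ from $f_1,f_2$ and makes $\ord_uf_{l-2}\ge1$), and distinguishing the formal identity $g(\mathcal P(u))u+h(\mathcal P(u))=0$ from the genuine inversion, which is exactly the nonvanishing of $g$ on $\C$ handled by Lemma~\ref{lm-satA}.
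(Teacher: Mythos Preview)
Your proposal is correct and follows essentially the same approach as the paper: both extract the inversion from the penultimate remainder $f_{l-1}=g(x,y)u+h(x,y)$ in the sequence built for Theorem~\ref{th-propercriteria}, and read off the order bounds from Lemma~\ref{lm-reductionsequence}. You have in fact supplied details the paper leaves implicit (uniqueness of the inversion in $\ff\langle\C\rangle$, the check $l\ge4$ ensuring $i=l-2$ is admissible, and the verification $g\notin\sat(A)$ via Lemma~\ref{lm-satA}), so your argument is a strict elaboration of the paper's one-line proof.
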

\begin{proof}
By the proof of Theorem \ref{th-propercriteria}, $f_{l-1}$ is linear in $u$ with $\ord_xf_{l-1}<m_2$ and $\ord_yf_{l-1}<m_1$.  
Thus, $u \in \ff\big(\frac{P_1}{Q_1}, (\frac{P_1}{Q_1})', \cdots, (\frac{P_1}{Q_1})^{(m_2-1)}, \frac{P_2}{Q_2}, (\frac{P_2}{Q_2})',\cdots, (\frac{P_2}{Q_2})^{(m_1-1)}\big).$
 \end{proof}

% \begin{algorithm}   \label{propercriteria}
%  \caption{\bf ---Implicit($\mathcal P(u)$)} \smallskip
%  \Inp{A linear differential rational parametrization $\mathcal P(u)=\big(\frac{P_1(u)}{Q_1(u)},\frac{P_2(u)}{Q_2(u)}\big)$.}\\
%  \Outp{***.}\medskip
%
%  \noindent
%  1.  
%  
%    3. While $R=0$ do\\
% 
%  4. Return $R$.\medskip
%\end{algorithm}

\section{Rational parametrization for linear differential curves}
We now deal with the paramerization problem for differential curves, which asks for criteria to decide algorithmically whether an implicitly given differential curve is unirational or not, and in the affirmative case, to return a differential rational parametrization.
In general, it is a very difficult problem.
In this section, we start from the simplest nontrivial case by considering the  unirationality problem of linear differential curves.

\begin{definition}
Let $(\C,A(x,y))$ be an irreducible differential curve. We call $\C$ a linear differential curve if $A$ is a linear differential polynomial in $\ff\{x,y\}$.
\end{definition} 

 In the algebraic case, by Gaussian elimination, we know each linear variety is unirational and has a polynomial parametric representation. However, in the differential case, even a linear differential curve might not be unirational as shown in Example \ref{ex-1} (2).

Every linear differential polynomial $A(x,y)\in \ff\{x,y\}$ is of the form $$A=L_1(x)+L_2(y)+a$$
for some linear differential operators $L_1,  L_2\in \ff[\delta]$ where at least one $L_i$ is nonzero and $a\in \ff$.
We shall show in Theorem \ref{th-linear-unirational} that a necessary and sufficient condition for $A(x,y)$ to be unirational is that
the greatest common left divisor of $L_1, L_2$ is 1. 
Before that, we recall the extended left Euclidean algorithm given by Bronstein et al. in \cite[pp.14-15]{Bronstein}.

\vskip2pt
In general, $\ff[\delta]$ is a non-commutative domain and there exist the left and the right Euclidean divisions  in $\ff[\delta]$. 
Given $L_1, L_2\in \ff[\delta]$ with $L_2\neq0$,  by the left Euclidean division, we obtain  $Q, R \in \ff[\delta]$ with $\deg(R)<\deg(L_2)$ satisfying $L_1=L_2Q+R$, where $Q$ and $R$  are called respectively the {\it left-quotient} and the {\it left-remainder}  of $L_1$ w.r.t. $L_2$, denoted by  $\text{lquo}(L_1,L_2)$ and $\text{lrem}(L_1,L_2)$.  
 If $R=0$, then $L_2$ is called a left divisor of $L_1$, 
  and correspondingly, $L_1$ is called a right multiple of $L_2$.
  % and correspondingly $Q$ is called a right divisor  of $L_1$.
 A common left divisor  of $L_1$ and $L_2$ with the highest degree is called  a {\it greatest common left divisor} of $L_1$ and $L_2$. There exists a unique, monic (i.e., reduced in Ore's sense), greatest common left divisor, denoted by  $\text{gcld}(L_1,L_2)$.   A common right multiple of $L_1, L_2$ of minimal degree is called a {\it least common right multiple}.
And we have analogous notions for right Euclidean divisions.
Below, we restate the extended left Euclidean algorithm {\bf ELE}$(L_1,L_2)$ for later use and list its basic properties in Proposition \ref{left}, which  were given in {\rm \cite[pp.14-15]{Bronstein}.} 

\vskip5pt
{\bf Left Euclidean Algorithm:}  {\bf ELE}$(L_1,L_2)$

\hskip.1truecm \textbf{Input:} {$L_1, L_2 \in \ff[\delta]$.} \vskip1pt
\hskip.1truecm \textbf{Output:} {The tuple $(R_{n-1}, A_n, B_n, A_{n-1}, B_{n-1})\in  \ff[\delta]^5$.} \vskip1pt
   \hspace*{.5em}   1.  $R_0:=L_1, A_0:=1, B_0:=0$; \vskip2pt
   \hspace*{1.5em}    $R_1:=L_2, A_1:=0, B_1:=1$;\vskip2pt
  \hspace*{1.5em}    $i:=1.$ \vskip2pt
   \hspace*{.5em}  2. While $R_i \neq 0$ do \vskip2pt
 \hspace*{1.5em} $i:= i+1$; \vskip2pt
 \hspace*{1.5em} $Q_{i-1}:= \text{lquo}(R_{i-2},R_{i-1})$;\vskip2pt
 \hspace*{1.5em} $R_{i}:= \text{lrem}(R_{i-2},R_{i-1})$;\vskip2pt
 \hspace*{1.5em} $A_i :=A_{i-2}-A_{i-1}Q_{i-1}$; \vskip2pt
 \hspace*{1.5em} $B_i:= B_{i-2}-B_{i-1}Q_{i-1}$. \vskip2pt
 \hspace*{.5em} 3. {$n:=i$}, and {\bf return} $(R_{n-1}, A_n, B_n, A_{n-1}, B_{n-1})$.

\vskip5pt
\begin{prop}\label{left}
Let $L_1, L_2 \in \ff[\delta]$. The algorithm {\bf ELE}$(L_1,L_2)$ can be used to compute a greatest common left divisor and a least common right multiple of $L_1,L_2$ and the obtained sequences $A_i,B_i,R_i$ satisfy the following properties:
\begin{enumerate}
\item[$(1)$.] $R_{n-1}$ is a greatest common left divisor of $L_1,L_2$; 
\item[$(2)$.]$L_1A_n=-L_2B_n$ is a least common right multiple of $L_1,L_2$;
\item[$(3)$.]$R_i=L_1A_i+L_2B_i\,$ for $0 \leq i \leq n$; 
\item[$(4)$.]$\deg(A_i)=\deg(L_2)-\deg(R_{i-1})$, $\deg(B_i)=\deg(L_1)-\deg(R_{i-1})$\, for $2 \leq i \leq n$.
\end{enumerate}
\end{prop}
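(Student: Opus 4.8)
The plan is to prove the four statements in the order (3), (4), (1), (2), since the B\'ezout-type identity (3) is the backbone of the whole argument and the degree bookkeeping (4) is exactly what is needed to pin down the least common right multiple in (2). Throughout I will use the standard facts that $\ff[\delta]$ is an Ore domain in which degree is additive, $\deg(PQ)=\deg(P)+\deg(Q)$, and that left division yields $R_{i-2}=R_{i-1}Q_{i-1}+R_i$ with $\deg(R_i)<\deg(R_{i-1})$; hence $\deg(R_i)$ strictly decreases for $i\ge 1$ and the loop halts with $R_n=0$, $R_{n-1}\neq 0$. Statement (3) I would prove by induction on $i$: the cases $i=0,1$ are the initializations $A_0=1,B_0=0$ and $A_1=0,B_1=1$, and for the step one substitutes $R_{i-2}=L_1A_{i-2}+L_2B_{i-2}$, $R_{i-1}=L_1A_{i-1}+L_2B_{i-1}$ into $R_i=R_{i-2}-R_{i-1}Q_{i-1}$ and gathers $L_1$ and $L_2$ on the left, keeping $Q_{i-1}$ on the right (this is where the noncommutativity must be handled carefully), obtaining $R_i=L_1(A_{i-2}-A_{i-1}Q_{i-1})+L_2(B_{i-2}-B_{i-1}Q_{i-1})=L_1A_i+L_2B_i$.

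For (4) I would induct using $\deg(Q_{i-1})=\deg(R_{i-2})-\deg(R_{i-1})$. The cases $i=2,3$ follow by direct inspection from $A_2=1$, $B_2=-Q_1$, $A_3=-Q_2$, $B_3=1+Q_1Q_2$. For $i\ge 4$, additivity of degree gives $\deg(A_{i-1}Q_{i-1})=(\deg L_2-\deg R_{i-2})+(\deg R_{i-2}-\deg R_{i-1})=\deg L_2-\deg R_{i-1}$, while $\deg(A_{i-2})=\deg L_2-\deg R_{i-3}<\deg L_2-\deg R_{i-1}$ by the strict decrease of the $\deg R_j$; hence the $A_{i-2}$ summand cannot cancel the leading term of $A_{i-1}Q_{i-1}$ and $\deg(A_i)=\deg L_2-\deg R_{i-1}$. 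The argument for $B_i$ is identical.

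Statement (1) then splits into two inclusions. Since $R_n=0$ forces $R_{n-2}=R_{n-1}Q_{n-1}$, a downward induction along $R_{i-2}=R_{i-1}Q_{i-1}+R_i$ shows $R_{n-1}$ left-divides every $R_i$, in particular $R_0=L_1$ and $R_1=L_2$, so $R_{n-1}$ is a common left divisor. Conversely, any common left divisor $D$ of $L_1,L_2$ left-divides $L_1A_{n-1}+L_2B_{n-1}$, which equals $R_{n-1}$ by (3); thus $\deg D\le\deg R_{n-1}$ and $R_{n-1}$ is a greatest common left divisor.

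The remaining statement (2) is where the real work lies. Evaluating (3) at $i=n$ with $R_n=0$ gives $L_1A_n=-L_2B_n$, which is at once a right multiple of $L_1$ and of $L_2$, hence a common right multiple; the common-multiple property is therefore immediate. Proving that it is a \emph{least} one is the main obstacle. I would observe that the common right multiples form the right ideal $L_1\ff[\delta]\cap L_2\ff[\delta]$, which, $\ff[\delta]$ being a right principal ideal domain, is generated by the monic least common right multiple $N$, so $L_1A_n=NU$ for some $U\in\ff[\delta]$; it then suffices to show $\deg(L_1A_n)=\deg N$. By (4), $\deg(L_1A_n)=\deg L_1+\deg L_2-\deg R_{n-1}$, and by (1), $\deg R_{n-1}=\deg\gcld(L_1,L_2)$, so the claim reduces to the degree identity $\deg N=\deg L_1+\deg L_2-\deg\gcld(L_1,L_2)$. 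I would establish this by an $\ff$-dimension count: from $L_1\ff[\delta]+L_2\ff[\delta]=D\ff[\delta]$ with $D=\gcld(L_1,L_2)$ and the second isomorphism theorem $D\ff[\delta]/L_2\ff[\delta]\cong L_1\ff[\delta]/N\ff[\delta]$, comparing the dimensions $\deg L_2-\deg D$ and $\deg N-\deg L_1$ yields the identity. Consequently $\deg U=0$, $U\in\ff^{\times}$, and $L_1A_n$ is a least common right multiple. Everything except this last leastness check is routine induction; the degree identity (or, equivalently, the principal-ideal structure of $\ff[\delta]$) is the only genuinely nontrivial ingredient, and may alternatively be cited from the Ore-polynomial theory in \cite{Bronstein}.
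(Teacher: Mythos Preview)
Your proof is correct and self-contained, but note that the paper does not actually prove this proposition: it is merely restated from \cite[pp.~14--15]{Bronstein} and cited there without argument. In that sense you are supplying what the paper deliberately outsources. The route you take---the B\'ezout identity (3) by induction, the degree bookkeeping (4) from additivity of $\deg$ and strict decrease of $\deg R_i$, the two-sided divisibility for (1), and the dimension count $\deg N=\deg L_1+\deg L_2-\deg\gcld(L_1,L_2)$ via the second isomorphism theorem for (2)---is exactly the standard argument one finds in the Ore-polynomial literature such as \cite{Bronstein}, so there is no methodological divergence to speak of. One small caveat: your verification of (4) at $i=2,3$ tacitly uses $\deg L_1\ge\deg L_2$ (otherwise $Q_1=0$, $B_2=0$, and the formula $\deg B_2=\deg L_1-\deg R_1$ fails literally); this harmless ordering assumption is also implicit in the source and in how the paper later invokes the proposition.
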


The following result given in \cite{Rueda}  will be used to derive Theorem \ref{th-linear-unirational}.
 \begin{lemma}\label{rueda}  {\rm \cite[Theorem 30]{Rueda}}
Let $L_1, L_2\in\ff[\delta]$. Then $\ff\langle L_1(u), L_2(u) \rangle =\ff \langle u \rangle$ if and only if $\text{\rm gcrd}(L_1, L_2)=1$ {{(i.e., a greatest common right divisor of $L_1, L_2$ belongs to $\ff\backslash\{0\})$}}.
\end{lemma}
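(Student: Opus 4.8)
The plan is to reduce both implications to a Bézout-type identity in the noncommutative operator ring $\ff[\delta]$ and then to transfer that identity to the differential field $\ff\langle L_1(u),L_2(u)\rangle$ by evaluating at the transcendental element $u$. The conceptual content is the dictionary "right-coprime $\Leftrightarrow$ the pair $L_1,L_2$ jointly recovers $u$".

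First I would treat the implication $\gcrd(L_1,L_2)=1\Rightarrow\ff\langle L_1(u),L_2(u)\rangle=\ff\langle u\rangle$. The inclusion $\ff\langle L_1(u),L_2(u)\rangle\subseteq\ff\langle u\rangle$ is immediate, since $L_1(u)$ and $L_2(u)$ are differential polynomials in $u$ with coefficients in $\ff$. For the reverse inclusion I would invoke the extended \emph{right} Euclidean algorithm, the analogue of \textbf{ELE} for the right Euclidean divisions mentioned before Proposition \ref{left}. By the right analogue of Proposition \ref{left}(3) it produces $A,B\in\ff[\delta]$ with $AL_1+BL_2=\gcrd(L_1,L_2)$, where now the cofactors multiply on the \emph{left}. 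As $\gcrd(L_1,L_2)\in\ff\setminus\{0\}$, rescaling gives operators $A,B\in\ff[\delta]$ with $AL_1+BL_2=1$. Evaluating both sides at $u$ yields $A(L_1(u))+B(L_2(u))=u$. Since $A,B$ have coefficients in $\ff$ and $\ff\langle L_1(u),L_2(u)\rangle$ is closed under $\delta$ and under $\ff$-linear combinations, the left-hand side lies in $\ff\langle L_1(u),L_2(u)\rangle$, so $u\in\ff\langle L_1(u),L_2(u)\rangle$ and the two fields coincide.

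For the converse I would argue by contraposition. Suppose $D:=\gcrd(L_1,L_2)$ has degree $\ge 1$, and write $L_1=L_1^{\ast}D$, $L_2=L_2^{\ast}D$ with $L_i^{\ast}\in\ff[\delta]$. Evaluating at $u$ gives $L_1(u)=L_1^{\ast}(D(u))$ and $L_2(u)=L_2^{\ast}(D(u))$, whence both generators lie in $\ff\langle D(u)\rangle$ and $\ff\langle L_1(u),L_2(u)\rangle\subseteq\ff\langle D(u)\rangle$. Now $D(u)=\sum_i d_i u^{(i)}$ is a differential polynomial of order exactly $\deg D\ge 1$, with $\gcd(D(u),1)=1$, so by Lemma \ref{lm-ord}(2) we have $\trdeg_{\ff\langle D(u)\rangle}\ff\langle u\rangle=\deg D\ge 1$; in particular $\ff\langle D(u)\rangle\subsetneq\ff\langle u\rangle$. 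Combining the two inclusions gives $\ff\langle L_1(u),L_2(u)\rangle\subsetneq\ff\langle u\rangle$, as required.

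The routine parts (correctness of the right extended Euclidean algorithm and the closure of differential fields under the action of $\ff[\delta]$) are standard. The one point that genuinely demands care is the noncommutativity of $\ff[\delta]$: I must use the \emph{right} Euclidean algorithm so that $\gcrd(L_1,L_2)$ appears as $AL_1+BL_2$ with the cofactors on the left, which is precisely the shape that reconstructs $u$ upon substitution. Using the left version (Proposition \ref{left}) would produce the gcld in the form $L_1A+L_2B$ and would not generate $u$. Matching "right coprimeness" with "joint generation of $u$" is thus the crux of the argument.
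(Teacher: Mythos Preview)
The paper does not prove this lemma at all: it is quoted verbatim as \cite[Theorem 30]{Rueda} and used as a black box in the proof of Theorem \ref{th-linear-unirational}. So there is no in-paper argument to compare against.

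Your argument is correct and self-contained. The forward direction via the right extended Euclidean algorithm (so that the B\'ezout identity reads $AL_1+BL_2=1$ with left cofactors, hence $A(L_1(u))+B(L_2(u))=u$) is exactly the right mechanism, and your warning about not confusing this with the left algorithm {\bf ELE} of Proposition \ref{left} is on point. The converse is also fine: factoring out the nontrivial $D=\gcrd(L_1,L_2)$ traps both generators in $\ff\langle D(u)\rangle$, and Lemma \ref{lm-ord}(2) gives $\trdeg_{\ff\langle D(u)\rangle}\ff\langle u\rangle=\deg D\ge 1$, so the inclusion is strict. The only cosmetic point is that the degenerate case where one $L_i=0$ (so $\gcrd(L_1,L_2)=L_j$) should perhaps be mentioned; it falls out immediately from Lemma \ref{lm-ord}(2) as well.
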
 
 \vskip4pt
 \begin{theorem}\label{th-linear-unirational}
 Let $F=L_1(x)+L_2(y)+a\in\ff\{x,y\}\backslash\ff$
 with $L_1, L_2\in\ff[\delta]$.
 Then, \begin{center}
 $(\C,F)$ is unirational if and only if $\text{\rm{gcld}}(L_1, L_2)=1$.
 \end{center}
 Furthermore, each unirational linear differential curve has a proper linear differential polynomial parametrization. 
 \end{theorem}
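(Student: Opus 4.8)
The plan is to prove the two directions separately and to read off the ``furthermore'' part for free from the construction used in the sufficiency direction. For necessity I would argue by contraposition. Suppose $G:=\gcld(L_1,L_2)$ has positive degree and write $L_1=G\bar L_1$, $L_2=G\bar L_2$, so that $F=G\big(\bar L_1(x)+\bar L_2(y)\big)+a$. If $(\C,F)$ were unirational with a parametrization $\mathcal P(u)=(\mathcal P_1,\mathcal P_2)\in\ff\langle u\rangle^2$, set $b=\bar L_1(\mathcal P_1)+\bar L_2(\mathcal P_2)\in\ff\langle u\rangle$. Evaluating $F$ at $\mathcal P(u)$ gives $G(b)=-a\in\ff$, so $b$ is differentially algebraic over $\ff$; but by Lemma~\ref{lm-ord}(2) every element of $\ff\langle u\rangle\setminus\ff$ is differentially transcendental over $\ff$, forcing $b\in\ff$. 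Then $\bar L_1(x)+\bar L_2(y)-b$ is a nonzero element of $\sat(F)$ of order $\ord(F)-\deg G<\ord(F)$, contradicting Lemma~\ref{lm-satA}. This is precisely the mechanism behind the non-example in Example~\ref{ex-1}(2).

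For sufficiency, assume $\gcld(L_1,L_2)=1$; I would dispatch the degenerate cases ($L_1=0$, $L_2=0$, or one $L_i$ a nonzero scalar) by direct inspection and concentrate on $\deg L_1,\deg L_2\geq1$. Running $\textbf{ELE}(L_1,L_2)$, Proposition~\ref{left} yields operators with $L_1A_n=-L_2B_n$ a least common right multiple and $L_1A_{n-1}+L_2B_{n-1}=R_{n-1}=1$. Choosing $\alpha=-A_{n-1}(a)$ and $\beta=-B_{n-1}(a)$ in $\ff$ gives $L_1(\alpha)+L_2(\beta)=-a$, so the linear differential polynomial parametrization $\mathcal P(u)=\big(A_n(u)+\alpha,\,B_n(u)+\beta\big)$ satisfies $F(\mathcal P(u))=L_1A_n(u)+L_2B_n(u)+L_1(\alpha)+L_2(\beta)+a=0$. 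To see that $\mathcal P(u)$ is proper I would show $\gcrd(A_n,B_n)=1$: a common right divisor $D$ of positive degree could be right-cancelled in $L_1A_n=-L_2B_n$ to produce a common right multiple of strictly smaller degree than the least common right multiple, a contradiction. Then Lemma~\ref{rueda} gives $\ff\langle\mathcal P(u)\rangle=\ff\langle A_n(u),B_n(u)\rangle=\ff\langle u\rangle$.

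The remaining, and I expect most delicate, step is to verify that $\mathcal P(u)$ is genuinely a generic point of $\C=\V(\sat(F))$ rather than lying on a smaller subvariety, i.e.\ that $\mathbb I(\mathcal P(u))=\sat(F)$. Since $\ff\langle\mathcal P(u)\rangle=\ff\langle u\rangle$ has differential transcendence degree $1$ and $x=A_n(u)+\alpha$ is differentially transcendental over $\ff$ (as $A_n(u)\notin\ff$), the prime $\mathbb I(\mathcal P(u))$ has $\{x\}$ as a parametric set under the ranking $x<y$ and hence a single-element characteristic set consisting of an irreducible polynomial $A_1$, so $\mathbb I(\mathcal P(u))=\sat(A_1)$. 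Now $\mathcal P(u)$ is a proper parametrization of $A_1$, so Theorem~\ref{th-order} yields $\ord_yA_1=\ord(A_n(u)+\alpha)=\deg L_2=\ord_yF$ and $\ord_xA_1=\deg L_1=\ord_xF$. As $F\in\sat(A_1)$ with $\ord(F)=\ord(A_1)$, Lemma~\ref{lm-satA} forces $A_1\mid F$, whence $A_1=cF$ for some $c\in\ff$ and therefore $\mathbb I(\mathcal P(u))=\sat(F)$. This shows $(\C,F)$ is unirational and simultaneously exhibits the proper linear differential polynomial parametrization required for the ``furthermore'' claim, completing the proof.
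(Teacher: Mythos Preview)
Your proof is correct and follows essentially the same route as the paper's: the necessity argument is identical (factor through a nontrivial left divisor, obtain a lower-order element of $\sat(F)$, contradict Lemma~\ref{lm-satA}), and for sufficiency both use the output of \textbf{ELE}$(L_1,L_2)$ to build the parametrization, invoke the least-common-right-multiple property to get $\gcrd(A_n,B_n)=1$, apply Lemma~\ref{rueda} for properness, and finish via Theorem~\ref{th-order} plus Lemma~\ref{lm-satA}. One minor slip: the algorithm only guarantees $R_{n-1}=c\in\ff\setminus\{0\}$, not $R_{n-1}=1$, so $L_1A_{n-1}+L_2B_{n-1}=c$ and you should take $\alpha=A_{n-1}(-a/c)$, $\beta=B_{n-1}(-a/c)$ (as the paper does) rather than $-A_{n-1}(a),-B_{n-1}(a)$; otherwise $L_1(\alpha)+L_2(\beta)=-ca$ and $F(\mathcal P(u))$ need not vanish.
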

 
 \begin{proof}
For the necessity,  suppose $(\C,F)$ is unirational.  
If $\text{\rm{gcld}}(L_1, L_2) \neq1$, then there exist $L \in \ff[\delta]\backslash \ff$ and $ L_3, L_4 \in \ff[\delta]$ such that $L_1=LL_3, L_2=LL_4$. Then $F=L(L_3(x)+L_4(y))+a$ with $\ord(L_3(x)+L_4(y)) < \ord(F)$. Since there exists $\mathcal P(u) \in \ff \langle u \rangle ^2$ such that $\sat(F)=\mathbb I(\mathcal P(u))$, we have $F(\mathcal P(u))=0$, which implies that $L_3(\mathcal P(u))+L_4(\mathcal P(u)) \in \ff$. 
Thus, $A:=L_3(x)+L_4(y) -b \in \sat(F)$ for some $b \in \ff$. 
Since $\ord(A) < \ord(F)$,  this leads to a contradiction.  So $\text{\rm{gcld}}(L_1, L_2) =1$.
 
     \vskip2pt
To show the sufficiency,  suppose $\text{\rm{gcld}}(L_1, L_2) = 1$.  
By performing the algorithm {\bf ELE($L_1,L_2$)}, we obtain $A_i,\,B_i, \,R_i  \in \ff[\delta]$ satisfying the properties given in Proposition \ref{left}. 
In particular, $L_1A_n=-L_2B_n$ is a least common right multiple of $L_1$ and $L_2$.
 The fact that $\text{\rm{gcld}}(L_1, L_2) = 1$ yields $c:=R_{n-1} \in \ff \backslash\{0\}$, and consequently 
 $\deg(A_n)=\deg(L_2), \deg(B_n)=\deg(L_1)$. 
 Let $$\mathcal P(u)=\Big(A_n(u)+A_{n-1}(-a/{c}),B_n(u)+B_{n-1}({-a}/{c})\Big)\in \ff\{u\}^2.$$
 We shall show that  $\mathcal P(u)$ is a proper linear differential polynomial parametrization of $(\C,F)$. 
      
      \vskip2pt
  We first prove $\ff\langle\mathcal P(u)\rangle=\ff\langle u\rangle$. 
  Since $\ff\langle\mathcal P(u)\rangle=\ff\langle A_n(u), B_n(u)\rangle$,
  by Lemma \ref{rueda}, it suffices to prove that gcrd$(A_n, B_n)=1$. 
  If $\text{\rm{gcrd}}(A_n, B_n) \neq 1$, there exists $C(\delta) \in \ff[\delta]\backslash\ff$ such that $A_n=C_1(\delta)C(\delta), B_n=C_2(\delta)C(\delta)$ for some $C_1, C_2 \in \ff[\delta]$. Since $L_1A_n=-L_2B_n$, we obtain $L_1C_1=-L_2C_2$ is also a common right multiple of $L_1,L_2$, which contradicts the fact that  $L_1A_n$ is a least common right multiple of $A,B$. 
  Therefore,  $\ff\langle\mathcal P(u)\rangle=\ff\langle u\rangle$.

     \vskip2pt
By Theorem \ref{th-order}, there exists an irreducible differential polynomial $G(x,y) \in \ff\{x,y\}$ with $\ord_xG=\deg(L_1), \ord_yG=\deg(L_2)$ such that $\mathbb I(\mathcal{P}(u))=\sat(G)$.  
Since $L_1A_{n-1}+L_2B_{n-1}=c$,  by acting this operator on $-a/c$, we have $L_1(A_{n-1}(\frac{-a}{c}))+L_2(B_{n-1}(\frac{-a}{c}))=-a$.
 So $F(\mathcal P(u))=L_1(A_n(u))+L_2(B_n(u))=(L_1A_n+L_2B_n)(u)=0$ and $F\in\sat(G)$ follows.
 Since $\ord(G)=\ord(F)$ and $F$ is linear, $F=eG$ for some $e\in \ff$ and $\mathbb I(\mathcal{P}(u))=[F]$.
 Thus, $\mathcal P(u)$ is a proper linear differential  polynomial parametrization of $F$ and $(\C,F)$ is unirational.
 The later assertion follows directly from the above proof.
 \end{proof}
 \vskip5pt
By the proof of Theorem \ref{th-linear-unirational}, we can devise an algorithm to determine whether an implicitly given  linear differential curve is unirational or not, and in the affirmative case, to construct a proper linear differential polynomial parametrization for it. 
\newpage
{\bf Algorithm Linear-Differential-Curve-Parametrization:}  {\bf LDCP}$(F)$
\vskip1pt
\hskip.1truecm \textbf{Input:} {$F=L_1(x)+L_2(y)+a\in\ff\{x,y\}\backslash\ff$ with $L_1, L_2\in\ff[\delta]$.} \vskip2pt
\hskip.1truecm \textbf{Output:} {A proper linear differential polynomial parametrization $\mathcal P(u)$ of $(\C, F)$, if it is \\\hspace*{5.5em} unirational; No, in the contrary case.} \vskip2pt
 \hspace*{.5em}  1.  Perform {\bf ELE}$(L_1,L_2)=(R_{n-1}, A_n, B_n, A_{n-1}, B_{n-1})$; \vskip2pt
 \hspace*{.5em}  2. If $R_{n-1} \notin \ff$, then {\bf return} No; \vskip2pt
 \hspace*{.5em} 3. {\bf Return} $\mathcal P(u)=\Big(A_n(u)+A_{n-1}(-a/{R_{n-1}}),B_n(u)+B_{n-1}({-a}/{R_{n-1}})\Big)$.
}
\vskip5pt
 Below, we give  examples to illustrate Theorem \ref{th-linear-unirational}.
\begin{example} \label{ex-linearunirational}
\begin{itemize}
\item[{\rm (1)}] Let $A=x''-y' \in \Q\{x,y\}$, then $L_1=\delta^2,L_2=\delta$ and $(\C,A)$ is not unirational.
\item[{\rm (2)}] Let $A=y'-x'-x \in \Q\{x,y\}$, then $L_1=-\delta-1,L_2=\delta$, $(\C,A)$ is unirational with a proper parametrization $(u',u+u')$.
\item[{\rm (3)}] Let $A=x'+x+ty'+(t+1)y \in \Q(t)\{x,y\}$ with $\delta =\frac{d}{dt}$.
  For this example,  $L_1=\delta+1, L_2=t\delta+t+1$. Since {$\gcld(L_1, L_2)=\delta+1$},
$(\C,A)$ isn't unirational.  
\item[{\rm (4)}] Let $A=tx'+tx+y'+y \in \Q(t)\{x,y\}$ with $\delta =\frac{d}{dt}$. Then $(\C,A)$ is unirational with a proper parametrization $(u'+u,-tu'+(1-t)u)$. 
Here  $L_1=t\delta+t, L_2=\delta+1$. Note that $\gcld(L_1, L_2)=1$ but $\gcrd(L_1, L_2)=\delta+1$.
\end{itemize}
\end{example}
% \begin{cor}
% Every unirational linear differential curve has a proper linear differential polynomial parametrization. 
% \end{cor}
%} 
%It is well-known that in the algebraic case, if two prime ideals with the same dimension have the inclusion relation, they are actually the same. However, this result doesn't hold true for the differential case. The natural question is that, if we add some additional conditions, such as both of the given two prime differential ideals define unirational differential curves, whether the inclusion relation could imply the equality. In the following, we give a positive answer to a special case.

\vskip3pt
{Given two differential polynomials $A, B\in\ff\{x,y\}$, we have shown  in Example \ref{ex-1} 2) that if $A$ is a proper derivative of $B$,
then $(\C,A)$ is not differentially unirational. 
If additionally both $A$ and $B$ are linear, we have a stronger result as follows.
 
\begin{prop}\label{linearinclude}
Let $A, B$ be two linear differential polynomials in $\ff\{x,y\}\backslash \ff$.  
\begin{itemize}
\item[1).] If $A\in [B]$ with $\ord(A)>\ord(B)$, then $(\C,A)$ is not differentially unirational.
\item[2).] Suppose $A$ is differentially unirational. If $[A]\subseteq[B]$, then $[A]=[B]$.
\end{itemize}
\end{prop}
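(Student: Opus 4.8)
The plan is to reduce both parts to a single structural fact about linear differential polynomials, which I will call the \emph{operator lemma}: if $A,B\in\ff\{x,y\}\backslash\ff$ are both linear and $A\in[B]$, then there is an operator $T\in\ff[\delta]$ with $A=T(B):=\sum_i t_iB^{(i)}$, where $T=\sum_i t_i\delta^i$. Writing $B=L_1(x)+L_2(y)+a$, this says $A=(TL_1)(x)+(TL_2)(y)+T(a)$; that is, the two $\ff[\delta]$-components of $A$ are left multiples of the corresponding components of $B$ by one common operator $T$.

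To prove the operator lemma I would run the Ritt--Kolchin reduction of $A$ modulo $B$ under a fixed ranking. Since $B$ is linear, its leader occurs to degree $1$ with coefficient $\I_B=\S_B=c\in\ff\backslash\{0\}$, and the same holds for each derivative $B^{(j)}$, whose leader is the corresponding derivative of the leader of $B$ with the same leading constant $c$. Because $A$ is also linear, every term to be eliminated occurs with a coefficient in $\ff$, so each elimination step subtracts an $\ff$-multiple of some $B^{(j)}$ and no powers of separants or initials accumulate. Hence the reduction yields $A=T(B)+r$ with $T\in\ff[\delta]$ and $r$ reduced with respect to $B$. Now $A\in[B]=\sat(B)$, which by Lemma \ref{lm-satA} is prime with characteristic set $B$ (its separant is the unit $c$, so $\sat(B)=[B]$); since a prime ideal contains no nonzero element reduced with respect to its characteristic set, $r=0$ and $A=T(B)$.

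Granting the operator lemma, both parts follow using additivity of degree in the domain $\ff[\delta]$ together with the unirationality criterion of Theorem \ref{th-linear-unirational}. Write $A=M_1(x)+M_2(y)+a_A$ and $B=L_1(x)+L_2(y)+a_B$, so the lemma gives $M_1=TL_1$, $M_2=TL_2$ and hence $\ord(A)=\deg(T)+\ord(B)$. For part 1), $\ord(A)>\ord(B)$ forces $\deg(T)\geq1$; then $T$ is a common left divisor of $M_1,M_2$ of positive degree, so $\gcld(M_1,M_2)\neq1$, and $(\C,A)$ is not unirational by Theorem \ref{th-linear-unirational}. For part 2), $[A]\subseteq[B]$ gives $A\in[B]$, hence $A=T(B)$; since $A$ is unirational, Theorem \ref{th-linear-unirational} gives $\gcld(M_1,M_2)=1$, which together with $M_1=TL_1$, $M_2=TL_2$ forces $\deg(T)=0$, i.e. $T=c\in\ff\backslash\{0\}$. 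Therefore $A=cB$ and $[A]=[B]$.

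The main obstacle is the operator lemma, and within it the delicate point is the bookkeeping of the reduction: one must verify that linearity of \emph{both} $A$ and $B$ keeps every reduction multiplier inside $\ff$, so that the outcome is genuinely an operator $T\in\ff[\delta]$ applied to $B$ rather than a differential-polynomial combination $\sum g_jB^{(j)}$ with the $g_j$ of positive degree, and that $\sat(B)=[B]$ so the reduced remainder is forced to vanish. I would also record the degenerate cases where one of $L_1,L_2$ (equivalently $M_1,M_2$) is zero; there $\gcld$ is read with the convention $\gcld(L,0)=L$ made monic, and the arguments above go through verbatim.
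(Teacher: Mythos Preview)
Your proof is correct and follows the same strategy as the paper: both establish what you call the operator lemma---that $A\in[B]$ with $A,B$ linear forces $A=T(B)$ for some $T\in\ff[\delta]$---and then invoke Theorem~\ref{th-linear-unirational}. The paper simply asserts the operator identity $A=\sum_{i=0}^{s}a_iB^{(i)}$ without justification and derives 2) as a consequence of 1), whereas you supply the Ritt--Kolchin reduction details and prove 2) directly from $\gcld(M_1,M_2)=1$; these differences are expository rather than substantive.
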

\begin{proof}
Without loss of generality, suppose $\ord(A)=\ord_xA$ and set $s=\ord_xA-\ord_xB$.
Suppose  $A\in [B]$.
Since $A, B$ are linear,  there exists $a_i\in\ff$ with $a_{s}\neq0$ such that 
$$A=a_0\cdot B+a_1\cdot B'+\cdots+a_{s}\cdot B^{(s)}.$$
Clearly, the result 2) is a direct consequence of 1), so it suffices to show 1).
If $\ord(A)>\ord(B)$, then $s>0$ and $a_0+a_1\delta+\cdots+a_s\delta^s$ is a common left divisor of $L_1, L_2$, where $A=L_1(x)+L_2(y)+b\in\ff\{x,y\}$.
By Theorem \ref{th-linear-unirational}, $A$ is not  differentially unirational.
\end{proof}
%If $[A] \subsetneqq [B]$, then $\ord B < \ord A$. Since $A \in [B]$ and both $A, B$ are linear differential polynomials, by the differential reduction process, we may write $A$ as a linear combination of $B$ and its derivatives over $\ff$.  On the other hand, due to the fact that $(\C,A)$ is unirational, there exist $P(u), Q(u) \in \ff\langle u \rangle$ such that $\mathbb I(P(u), Q(u))=[A]$ . Thus, $B(P(u), Q(u)) \in \ff$, $B(x,y)-B(P(u), Q(u)) \in [A]$, which yields a contradiction!

\begin{remark} 
 Proposition \ref{linearinclude} shows that given two linear differential curves $\C_2\subseteq \C_1$, if $\C_1$ is unirational, then $\C_1=\C_2$.
However, in general, the inclusion of unirational differential curves doesn't imply the equality. 
For example, let $A=y'x-x'y+xy^{2}$ and $B=y$, then $(\C_2, B)\subsetneqq (\C_1, A)$.
Note that $(\C_1, A)$ and $(\C_2, B)$ are  unirational with prametrizations 
$(u',\frac{u'}{u})$ and $(u,0)$ respectively. 

\end{remark}

 {Now consider the implicitization of linear differential curves with given linear differential polynomial parametrization equations.
 In \cite[Sec. 8.1., Algorithm 2]{Rueda}, an algorithm was devised to compute the implicit equation of the linear differential curve using differential resultants.
We give an alternative method 
%to compute the implicit equation of the linear differential curve for a given linear differential polynomial parametrization 
based on the differential remainder sequence introduced in (\ref{eq-remaindersequence}). 
\begin{prop}
Let $\mathcal P(u)=(P_1(u),P_2(u))\in(\ff\{u\}\backslash\ff)^2$ be a linear differential polynomial parametrization.
Let $f_1=x-P_1(u),f_2=y-P_2(u)\in \ff\{u,x,y\}$. 
Suppose $$f_1(x,y,u),f_2(x,y,u),\cdots, f_{l-1}(x,y,u), f_l(x,y)$$ is the differential remainder sequence under the elimination ranking $\mathscr R: x<y<u$ obtained by the differential reduction process as in  Lemma \ref{lm-reductionsequence}. 
Then $\mathcal P(u)$ parametrizes $(\mathcal C,f_l(x,y))$
\end{prop}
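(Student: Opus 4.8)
The aim is to identify the prime differential ideal $\mathbb{I}(\mathcal P(u))$ with the general component of $f_l$; then $\mathcal P(u)$ is a generic zero of $\V(f_l)$, which is exactly the assertion that it parametrizes $(\C,f_l)$. The first thing I would record is that \emph{linearity propagates through the whole sequence}. Since $P_1,P_2$ are linear in the derivatives of $u$, the polynomials $f_1,f_2$ are linear in the derivatives of $x,y,u$ with coefficients in $\ff$; the separant of a linear differential polynomial is its scalar leading coefficient, so each step $\delta\text{-}\prem$ only multiplies by, and subtracts $\ff$-multiples of derivatives of, the previous polynomials, preserving both linearity and the coefficient field. Hence $f_l=\ell_1(x)+\ell_2(y)+c$ with $\ell_1,\ell_2\in\ff[\delta]$, $c\in\ff$. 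Because a nonzero linear differential polynomial has prime $[\,\cdot\,]=\sat(\cdot)$ (\cite[p.107]{Ritt}), it suffices to prove $\mathbb{I}(\mathcal P(u))=[f_l]$. Note also that $\ff\langle\mathcal P(u)\rangle\subseteq\ff\langle u\rangle$ has differential transcendence degree $1$ over $\ff$ while $P_1,P_2\notin\ff$, so $\mathbb{I}(\mathcal P(u))$ is a nonzero prime of differential dimension $1$; in particular it carries a nonzero $u$-free relation, and $f_l\neq0$ will come out of the characteristic-set analysis below.

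The easy containment $[f_l]\subseteq\mathbb{I}(\mathcal P(u))$ comes directly from the machinery already in place. Taking $i=l-1$ in the representation (\ref{eq-remainderform}) of Lemma \ref{lm-reductionsequence} writes $f_l$ as an $\ff\{x,y\}$-combination of derivatives of $f_1$ and $f_2$, so $f_l\in[f_1,f_2]$; since $f_1,f_2$ vanish at $(P_1(u),P_2(u),u)$ and $f_l\in\ff\{x,y\}$, we get $f_l(\mathcal P(u))=0$, that is $[f_l]\subseteq\mathbb{I}(\mathcal P(u))$.

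The reverse inclusion is the heart of the matter and the step I expect to be the main obstacle. I would obtain it by proving that $\mathcal A=\{f_l,f_{l-1}\}$ is a characteristic set of $J=[f_1,f_2]$ with respect to the elimination ranking $\mathscr R:x<y<u$, where $f_l\in\ff\{x,y\}$ has its leader among the derivatives of $x,y$ and $f_{l-1}$ has leader $u^{(\ord_u f_{l-1})}$. Granting this, the standard elimination property of characteristic sets shows that the $u$-free member $f_l$ is a characteristic set of the contraction $J\cap\ff\{x,y\}=\mathbb{I}(\mathcal P(u))$, whence $\mathbb{I}(\mathcal P(u))=\sat(f_l)=[f_l]$, as required. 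The delicate points in checking that $\mathcal A$ is a characteristic set are exactly the order estimates: that $f_{l-1}$ is reduced with respect to $f_l$ (so that $\mathcal A$ is autoreduced), and that $\mathcal A$ captures all of $J$ up to saturation. I would run these through the bookkeeping of Lemma \ref{lm-reductionsequence} together with Proposition \ref{prop-orderdifference}, following the pattern of Claim A in the proof of Theorem \ref{th-propercriteria} but now with $Q_1=Q_2=1$ and \emph{without} the properness hypothesis; concretely, one shows that $\ord_u f_{l-1}$ equals the order drop produced by any improperness, which forces $\ord_y f_l=\ord_y A$ and $\ord_x f_l=\ord_x A$ for the defining polynomial $A$ of $\C$, and hence $f_l=eA$ for some $e\in\ff$. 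It is precisely linearity that makes the conclusion clean: since $\sat(f_l)=[f_l]$ is already prime, we avoid the extraneous-factor phenomenon that afflicts the differential resultant in Corollary \ref{cor-resultant}, and no ``main factor'' of $f_l$ needs to be extracted.
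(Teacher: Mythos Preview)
Your overall strategy coincides with the paper's: reduce to showing that $\{f_l,f_{l-1}\}$ is a characteristic set of $J=[f_1,f_2]$ under $\mathscr R$, so that $\mathbb I(\mathcal P(u))=J\cap\ff\{x,y\}=[f_l]$; your remark that linearity propagates (so $[f_l]=\sat(f_l)$ is prime) is exactly what makes this clean, and the paper uses it implicitly.

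Where the proposal is vague and partly misdirected is the autoreducedness step. Claim~A in Theorem~\ref{th-propercriteria} is not the right template: its conclusions rely on $\delta\text{-}\Res(f_1,f_2)\neq 0$, which is unavailable here and in fact fails when $\mathcal P$ is improper. The paper instead exploits the polynomial hypothesis $Q_1=Q_2=1$ directly: all separants lie in $\ff\setminus\{0\}$, so each $\delta\text{-}\prem$ step raises the $y$-order by \emph{exactly} the drop in $u$-order, giving the equality $\ord_y f_i=\ord_u f_1-\ord_u f_{i-1}$ for $i\geq 3$ (a sharpening of the inequality in Lemma~\ref{lm-reductionsequence}(1)). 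This immediately yields $\ord_y f_{l-1}<\ord_y f_l$, so $\{f_l,f_{l-1}\}$ is autoreduced and is a characteristic set of $[f_l,f_{l-1}]=[f_1,f_2]$. The proof then closes by the two-way order inequality: $f_l\in\mathbb I(\mathcal P(u))=[A]$ gives $\ord_y f_l\geq\ord_y A$, while $A\in[f_1,f_2]\cap\ff\{x,y\}=[f_l]$ gives $\ord_y A\geq\ord_y f_l$; linearity forces $f_l=cA$. Proposition~\ref{prop-orderdifference} is not needed. The paper also disposes of $f_l\neq 0$ by a short direct argument (if $f_l=0$ then $f_1,f_2\in[f_{l-1}]$, but $f_{l-1}\in[f_1,f_2]$ forces $f_{l-1}$ to involve $x$ or $y$, which is incompatible with $f_1,f_2\in[f_{l-1}]$), rather than folding it into the characteristic-set analysis.
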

\begin{proof}
%For the special cases that either $\mathcal P_1(u)=a_1 \in\ff$ or $\mathcal P_2(u)=a_2 \in \ff$, we have either 1) $A=x-a_1$ or 2) $A=y-a_2$. Now we consider the case that neither $P_1(u)$ nor $\mathcal P_2(u)$ belongs to $\ff$. 
Let $A(x,y)=0$ be the implicit equation of $x=P_1(u), y=P_2(u)$. 
That is, $A$ is linear with $\mathbb I(\mathcal P(u))=[A(x,y)]$. 
Since $f_{i-2}\equiv f_i \mod [f_{i-1}]$ for $i=3,\ldots,l$,
each $f_i\in[f_1, f_2]$ and $f_1,f_2\in[f_{l},f_{l-1}]$.
We first show that $f_l(x,y) \neq 0$. 
Suppose the contrary,  then $f_2, f_1 \in [f_{l-1}]$. 
Since $f_{l-1} \in [f_1,f_2]$,  we have $\ord_xf_{l-1} \geq 0$ or $\ord_yf_{l-1} \geq 0$,
which  contradicts the fact that $f_2,f_1\in [f_{l-1}]$.
So $f_l(x,y) \neq 0$. Without loss of generality,  suppose $\ord(P_1)\geq \ord(P_2)$.
 Two cases are considered:
\begin{itemize}
\item Case 1) $l=3$.  Here, $f_3(x,y) \in \mathbb I(\mathcal P(u))=[A]$. Since $\ord_xf_3 =0$ and $\ord_xA \geq 0$, $f_3=cA$ for some $c \in \ff$ and $\mathbb I(\mathcal P(u))=[f_3]$ follows.
 \item Case 2) $l \geq 4$.  
 Since $\ord_yf_i=\ord_uf_1-\ord_uf_{i-1}$ for $i\geq 3$, $\ord_yf_{i-1}<\ord_yf_{i}$ for $4 \leq i \leq l$. 
 Thus, $\mathcal A: f_l(x,y),f_{l-1}(x,y,u)$ is a characteristic set of the prime differential ideal $[f_l,f_{l-1}]$ under $\mathscr R$. Since $f_2, f_1 \in [f_l,f_{l-1}]$, $A(x,y) \in [f_1,f_2] \subseteq [f_{l},f_{l-1}]$. 
 Therefore, $\ord_yf_l \leq \ord_yA$. 
 Since $f_l \in \mathbb I(\mathcal P(u))=[A]$,  $f_l =cA$ for some $c \in \ff$ and $\mathbb I(\mathcal P(u))=[f_l]$.
 \end{itemize}
 Thus,  $\mathcal P(u)$ is a differential parametrization of the differential curve $(\C,f_l).$
\end{proof}
}

\begin{remark}
{  Example \ref{ex-linearunirational} (1) shows that the linear differential curve $\C=\mathbb V(x''-y')\subset\mathbb A^2$ is not unirational. 
However, if we allow differential rational parametrizations involving arbitrary constants as in \cite[Example 1.2.]{Gao2003}, 
then $\C$ has a parametrization of the form $x=u$ and $y=u'+c$ where $c$ is an arbitrary constant.
It is also an interesting topic  to study  generalized ``unirational" differential curves  with rational parametrizations involving arbitrary constants.} 

\end{remark}
% 
%The following result shows that a class of nonlinear differential polynomials of the special form can be differentially parametrized. 
%\begin{prop}
%Let $A=(p(x)+ay)^{(k)}+bx-c \in \ff\{x,y\}$, where $p(x) \in \ff\{x\}$, $ab \neq 0,k\in\N$. Then $(\C,A)$ is unirational. 
%\end{prop}
%\begin{proof}
%Denote $\alpha=(\frac{c-u^{(k)}}{b},\frac{u-p(({c-u^{(k)}})/{b})}{a}\big)$. Obviously $A $ is an irreducible differential polynomial and $A(\alpha)=0 $. Furthermore, there exists $B \in \ff\{x,y\}$ such that $\sat(B)= \mathbb I(\alpha)$. Since $\alpha$ is a proper parametrization of $(\C,B)$, by theorem \ref{th-order}, $\ord_yB=k$. Thus, $A \in \sat(B)$ and is divisible by $B$. This completes the proof.
%\end{proof}
% 

\section{Problems for further study}

There are several problems for further study. 
Given a linear differential rational parametrization $\mathcal P(u) =(\frac{P_1(u)}{Q_1(u)},\frac{P_2(u)}{Q_2(u)})$, 
Theorem \ref{th-propercriteria} {provides an algorithm to decide whether a given linear differential rational parametrization is proper or not, and in the affirmative case, to compute the implicit equation.} 
 It is  interesting to see whether in general the differential resultant can be used to compute the implicit equations of proper differential rational parametric equations.

The most important and unsolved problem is to give general methods to determine  the rational parametrizability of nonlinear differential curves and if so, to develop efficient algorithms to compute proper differential rational parametrizations. Motivated by the results for algebraic curves,
the determination problem may amount to define new differential invariants such as  differential genus for differential curves as proposed in \cite{FengGao} and \cite{Gao2003}.

\section{Acknowledgements}
We are grateful to  Franz Winkler, Xiao-Shan Gao, James Freitag, Ru-Yong Feng, Gleb Pogudin and Alexey Ovchinnikov  for helpful discussions, suggestions  and also encouragement when we work on unirational differential curves. This work is partially supported by NSFC Grants (11971029, 11688101, 11671014).

\section*{References}

\end{document}